\newtheorem{theorem}{Theorem}
\newtheorem{proposition}[theorem]{Proposition}
\newtheorem{lemma}[theorem]{Lemma}
\newtheorem{corollary}[theorem]{Corollary}
\theoremstyle{definition}	
\newtheorem{definition}[theorem]{Definition}
\newtheorem{conjecture}[theorem]{Conjecture}
\newtheorem{problem}[theorem]{Problem}
\newcommand\inv{^{-1}}
\newcommand\cC{\mathcal{C}}
\newcommand\hG{\widehat{G}}
\newcommand{\cS}{{\mathcal{S}}}
\newcommand\cW{\mathcal{W}}
\newcommand\SPC{\operatorname{SPC}}
\newcommand\EDC{\operatorname{EDC}}
\newcommand\id{\mathrm{id}}
\tikzstyle{vertex}=[circle, draw, fill=black!50,
\tikzset{->-/.style={decoration={
  markings,
  mark=at position .5 with {\arrow{>}}},postaction={decorate}}}
\tikzstyle{bigblue}=[color=blue, very thick, >=stealth]
\tikzstyle{lightblue}=[color=blue, thin, >=stealth]
\tikzstyle{bigred}=[color=red, very thick, >=stealth]
\tikzstyle{lightred}=[color=red, thin, >=stealth]
\tikzstyle{biggreen}=[color=black!30!green, very thick, >=stealth]
\tikzstyle{lightgreen}=[color=black!30!green,  thin, >=stealth]
\newcommand{\preclosure}[1]{%
	\ThisStyle{%
		\vbox {\m@th\ialign{##\crcr
				\preclosurefill \crcr
				\noalign{\kern-\p@\nointerlineskip}
				$\hfil\SavedStyle{#1}\hfil$\crcr}}}}
\def\preclosurefill{%
	$\m@th%
	\xleaders\hbox{$\mkern0mu\shortbar\mkern0mu$}\hfill%
	\shortbar%
	$}
\def\shortbar{%
	\smash{\scalebox{0.3}[1.0]{$-$}}}
\newcommand\ERASE[1]{}
\begin{document}
\begin{frontmatter}
	
\title{{\bf Homomorphisms of signed graphs:\\ An update}}
	
\date{\today} 
	
	\author[IRIF]{Reza Naserasr}
	\author[LaBRI]{\'Eric Sopena}
    \author[BING]{Thomas Zaslavsky}

	\address[IRIF]{Université de Paris, IRIF, CNRS, F-75013 Paris, France.  E-mail: reza@irif.fr}
	\address[LaBRI]{ Univ. Bordeaux, Bordeaux INP, CNRS, LaBRI, UMR5800, F-33400 Talence, France}
	\address[BING]{Department of Mathematical Sciences, Binghamton University, Binghamton, NY 13902-6000, U.S.A.}


\begin{abstract}

A signed graph is a graph together with an assignment of signs to the edges. A closed walk in a signed graph is said to be positive (negative) if it has an even (odd) number of negative edges, counting repetition. Recognizing the signs of closed walks as one of the key structural properties of a signed graph, we define a homomorphism of a signed graph $(G,\sigma)$ to a signed graph $(H, \pi)$ to be a mapping of vertices and edges of $G$ to (respectively) vertices and edges of $H$ which preserves incidence, adjacency and the signs of closed walks.

In this work we first give a characterization of the sets of closed walks in a graph $G$ that correspond to the set of negative walks in some signed graph on $G$.  We also give an easy algorithm for the corresponding decision problem.

After verifying the equivalence between this definition and earlier ones, we discuss the relation between homomorphisms of signed graphs and those of 2-edge-colored graphs. Next we provide some basic no-homomorphism lemmas. These lemmas lead to a general method of defining chromatic number which is discussed at length. Finally, we list a few problems that are the driving force behind the study of homomorphisms of signed graphs.

\end{abstract}

\begin{keyword} Signed graph; edge-colored graph, graph homomorphism, packing. 
\end{keyword}

\end{frontmatter}

\newpage
\tableofcontents


\newpage
\newpage
\section{Introduction}
\label{sec:introduction}

The notion of homomorphisms of signed graphs was first defined by B. Guenin in an unpublished manuscript. The development of the subject started from \cite{NRS15}. It also appeared under a different context at \cite{BG09}. The theory extends the classical notion of graph homomorphism and is strongly related to the theory of homomorphisms of 2-edge-colored graphs, but has the main advantage of correlating with the theory of graph minors.

Being unsatisfied with the definition and terminology employed in \cite{NRS15}, here we present a more natural definition which leads to a number of simple no-homomorphism lemmas, special subclasses and some characterization theorems. The new definition is based on closed walks and their signs, thus we study them at length. The notion of chromatic number of a signed graph, introduced long ago by Zaslavsky \cite{Z84}, has recently drawn considerable attention with a variety of definitions offered. We will show how one such definition can be regarded as an optimization question with certain restrictions. 
At the end, we list only a few of the most motivating questions of this theory.

Toward a comprehensive study, we will begin with terminology and notation. This is of particular importance due to the diversity of terminology being used in the study of signed graphs. The following section is on the signs of cycles and closed walks, which we consider as basic elements of signed graphs. Section~\ref{sec:Homomorphism} is on the main subject of this work, homomorphisms of signed graphs, and the last section is about motivating problems, noting that the number of open problems is too large to be listed.

\section{Terminology and Notation for Graphs and Signed Graphs}
\label{sec:terminology}

\subsection{Graphs}\label{sec:graphterm}

We allow a graph $G=(V(G), E(G))$ to have loops and multiple edges.  Thus the vertex and edge sets are disjoint, and each edge is provided with a multiset of two vertices, called its \emph{endpoints}.  If the endpoints are equal, the edge is a \emph{loop}; if not, it is a \emph{link}.  Different edges may have the same multiset of endpoints; then they are called \emph{parallel edges}.  By having a set of edges instead of a multiset, we can have functions that differ on parallel edges.

An \emph{edge cut} $[X,Y]$ of $G$ is the set of all edges with one endpoint in $X$ and another in $Y$, where $X$ and $Y$ form a partition of $V(G)$. 
  
A \emph{walk} of $G$ is a sequence $W = v_0 e_1 v_1 e_2 \cdots, e_k v_k$  where for each $e_i$ its endpoints are $v_{i-1}$ and $v_{i}$ (thus if $e_i$ is a loop we must have $v_i=v_{i-1}$).  Its length is $k$ and its parity is the parity of $k$.  It is a \emph{closed walk} if we have $v_k=v_0$.  
An \emph{trivial walk} is a walk of length $0$, i.e., with one vertex and no edge; this is considered a closed walk.
The \emph{inverse} of $W$ is the walk $W\inv = v_ke_k \cdots e_2 v_1 e_1 v_0$.
The walk $W$ is a $\emph{path}$ if there is no repeated element. It is a $\emph{cycle}$ if $k\geq1$, $v_0=v_k$, and that is the only repetition of elements in the walk. Thus loops are cycles of length 1 and parallel edges form cycles of length 2.  Given two walks $W_1=v_0 e_1 v_1 e_2 \cdots e_k v_k$ and $W_2= v_k e_{k+1} v_{k+1} e_{k+2} \cdots e_{k+l} v_{k+l}$ ($W_2$ starts at the end  of $W_1$) the walk $v_0 e_1 v_1 e_2 \cdots e_k v_ke_{k+1} v_{k+1} e_{k+2} \cdots e_{k+l} v_{k+l}$ is denoted by $W_1W_2$. 
We often omit the vertices from the sequence defining a walk, since they are usually obvious; thus $W$ may be written as $e_1e_2\cdots e_k$.  
A closed walk $W = v_0 e_1 v_1 e_2 \cdots e_k v_k$ may be \emph{rotated} to begin at a different vertex, giving a walk $v_{i-1}e_iv_i \cdots e_kv_ke_1v_1 \cdots e_{i-1}v_i$, which we call a \emph{rotation} of $W$.

A \emph{theta graph} is a graph that consists of three paths joining the same two vertices, but which are otherwise pairwise disjoint.  Theta subgraphs of a graph are important in signed graph theory.

\subsection{$2$-edge-colored graphs}\label{sec:2ecgraphterm}

A \emph{$2$-edge-colored graph}, $\Gamma=(V(\Gamma), E_1(\Gamma), E_2(\Gamma))$, 
is a graph where the set of edges is decomposed into two disjoint subsets.  That is, we have two colors and we associate one of them to each edge of $\Gamma$.  Thus, we have a mapping $E(\Gamma) \to \mathbf{C}$, where $\mathbf{C}$ is the set of two colors.  

\subsection{Signed graphs}\label{sec:sgterm}

A \emph{signed graph} is a graph $G$ together with an assignment $\sigma: E \to \{+,-\}$ of a sign ($ +$ or $-$) to each edge of $G$.  We call $G$ the \emph{underlying graph}, and $\sigma$ is called the \emph{signature}.  We may denote this signed graph by $(G, \sigma)$ or sometimes by $\hG$. 
A signed graph where all edge are positive is denoted by $(G,+)$ and called \emph{all positive}, and similarly if all edges are negative it will be denoted by $(G,-)$ and called \emph{all negative}. 

We call a signed graph $(G,\sigma)$ connected, bipartite, etc.\ when $G$ is connected, bipartite, etc. 

The signed graph $(G,\sigma)$ may be thought of as a 2-edge-colored graph $(G,E^+,E^-)$, where $E^+$ and $E^-$ denote the sets of positive and negative edges, respectively; but that does not express the fact that signs $+$ and $-$ are essentially different.
The difference between a signed graph and a 2-edge-colored graph is  on the notion of \emph{sign of a closed walk} in $(G,\sigma)$.  For any walk $W = e_1e_2\cdots e_l$, of a signed graph $(G,\sigma)$, the sign of $W$ is $\sigma(W) := \sigma(e_1)\sigma(e_2)\cdots\sigma(e_l)$.  Then $W$ is said to be \emph{positive} or \emph{negative} depending on the value of $\sigma(W)$.  Since a cycle of a graph is also a closed walk we naturally have the definition of \emph{positive cycles} and \emph{negative cycles}.  It is clear that the signs of cycles determine the signs of all closed walks.

Signs of cycles determine many fundamental properties of a signed graph.  The most important is \emph{balance}. A signed graph $(G,\sigma)$ is said to be \emph{balanced} if every cycle is positive. It is said to be \emph{antibalanced} if $(G,-\sigma)$ is balanced.  
These two subclasses of signed graphs together with the subclass of signed bipartite graphs form three subclasses of signed graphs which will be shown to be of special interest in Section~\ref{sec:special}.

A closely related notion is the notion of \emph{switching}\footnote{Switching is called \emph{resigning} in some works on signed graphs.}: to switch a vertex $v$ of a signed graph $(G,\sigma)$ is to negate all signs in the edge cut $[\{v\}, V(G)\setminus v]$.  To switch a set $X$ of vertices is to switch all the vertices of $X$ in any sequence; it has the effect that it negates the edges in the cut $[X,Y]$, which is the same as switching the complementary set $Y$. 
A fundamental though obvious fact is that \emph{switching does not change the signs of closed walks}.   
An equivalent property to balance is that, after a suitable switching, every edge is positive; equivalent to antibalance is that, after suitable switching, every edge is negative.

Historically, a notion similar to that of edge signs in the form of a distinguished edge set appeared in the first book on graph theory, published by D. K\"onig in 1936 \cite[Chapter X, Section 3]{K36}.  This work contains many basic results about signed graphs (in its own terminology), as detailed in its entry in \cite{Z98}. The essential concepts of edge and cycle signs were first introduced by Harary \cite{H53}.  Switching in the form of set summation with a vertex cut is employed by K\"onig, but further use of switching of signed graphs begins only much later in \cite{Z81, Z82b}.

Two signatures $\sigma_1$ and $\sigma_2$ on the same graph $G$ are said to be \emph{switching equivalent} if one is obtained from the other by switching. The set of negative closed walks or negative cycles determines the class to which a signature belongs.

\begin{lemma}[Zaslavsky~{\cite[Theorem 3.2]{Z82b}}]\label{lem:Zaslavsky}
	Given two signatures $\sigma_1$ and $\sigma_2$ of a graph $G$, 
	$\sigma_1$ is a switching of $\sigma_2$ if and only if the sets of 
	positive (or, equivalently, negative) cycles of $(G,\sigma_1)$ and $(G,\sigma_2)$ are the same. 

\end{lemma}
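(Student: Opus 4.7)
The forward direction is immediate from the fundamental fact, already noted in the excerpt, that switching preserves the sign of every closed walk, and hence of every cycle.

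For the converse, assume $(G,\sigma_1)$ and $(G,\sigma_2)$ have the same positive cycles. The plan is to identify a switching set $X \subseteq V(G)$ that carries $\sigma_2$ to $\sigma_1$. I would introduce the \emph{product signature} $\tau$ defined by $\tau(e) := \sigma_1(e)\sigma_2(e)$ for every edge $e$. For any cycle $C$, $\tau(C) = \sigma_1(C)\sigma_2(C) = +$ by hypothesis, so $(G,\tau)$ is balanced. Observe also that the set of edges on which $\sigma_1$ and $\sigma_2$ disagree is exactly $\{e : \tau(e) = -\}$, so switching a set $X \subseteq V(G)$ transforms $\sigma_2$ into $\sigma_1$ if and only if switching $X$ in $(G,\tau)$ produces the all-positive signature.

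It therefore suffices to prove the auxiliary claim (one direction of the equivalence mentioned in the excerpt): every balanced signed graph can be switched to be all-positive. Working in each connected component separately, I would fix a spanning tree $T$ with a root $r$, and define $X$ to be the set of vertices $v$ for which the unique $r$-$v$ path $P_v$ in $T$ contains an odd number of $\tau$-negative edges. A short check using the recursion $P_v = P_u e$ (where $u$ is the parent of $v$ and $e = uv$) shows that switching $X$ turns every tree edge positive. For a non-tree edge $e = uv$, the fundamental cycle of $e$ relative to $T$ is $\tau$-positive by balance, and a brief case analysis on whether $u$ and $v$ belong to $X$ shows that $e$ also becomes positive. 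Loops are $\tau$-positive by balance, meaning $\sigma_1$ and $\sigma_2$ already agree on them, and switching never alters a loop's sign.

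The main obstacle is this auxiliary balance lemma, which relies on the spanning-tree construction; the rest of the argument is the short bookkeeping that packages $\sigma_1$ and $\sigma_2$ into a single balanced signature through their product, thereby reducing the theorem to a structural fact about balanced signed graphs alone.
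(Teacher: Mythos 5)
Your proof is correct, and every step checks out: the product signature $\tau=\sigma_1\sigma_2$ is balanced exactly when the two signatures have the same positive cycles, the cut realizing a switching from $\sigma_2$ to $\sigma_1$ must coincide with the $\tau$-negative edge set, and your tree-path-parity choice of $X$ (together with the fundamental-cycle check for non-tree edges and the loop remark) establishes the auxiliary fact that a balanced signed graph switches to all-positive. The paper itself only sketches Zaslavsky's argument, which is organized differently: it uses Lemma~\ref{lem:ForcingTree} to switch \emph{both} signatures so that they agree (say, are all positive) on a fixed spanning tree $T$, and then observes that the sign of each non-tree edge $e$ is forced by the sign of the fundamental cycle $C_e$, which is the same for $\sigma_1$ and $\sigma_2$ by hypothesis; hence the two normalized signatures are literally equal. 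Your route instead packages the two signatures into one and reduces the statement to Harary's balance criterion --- incidentally the very reduction the paper uses later in the proof of Proposition~\ref{coro:SwitchClassPolytime}. Both arguments rest on the same spanning-tree/fundamental-cycle machinery; the paper's version has the small advantage of exhibiting a canonical representative of each switching class (the signature that is all positive on $T$), which it exploits for the counting remark and the algorithm, while yours isolates a reusable structural lemma about balanced signed graphs.
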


The proof given in \cite{Z82b} is based on two key observations which we would like to state separately:  

\begin{lemma}\label{lem:Tree}
	For a tree $T$, any two signatures $\sigma_1$ and $\sigma_2$ are switching equivalent.
	\end{lemma}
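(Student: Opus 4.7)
The plan is to construct an explicit switching set $X \subseteq V(T)$ that carries $\sigma_1$ to $\sigma_2$. Let $F := \{e \in E(T) : \sigma_1(e) \neq \sigma_2(e)\}$ be the set of edges on which the two signatures disagree. Since switching a set $X$ negates precisely the edges of the edge cut $[X, V(T)\setminus X]$, the task reduces to producing an $X$ with $[X, V(T)\setminus X] = F$. Thus the content of the lemma boils down to showing that \emph{every} subset of edges of a tree is realized as an edge cut, which is plausible because a tree has no cycles to impose parity constraints.

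To build such an $X$, root $T$ at an arbitrary vertex $r$, and for each $v \in V(T)$ let $P_v$ denote the unique $r$-$v$ path in $T$. I would then define
\[
X := \{\, v \in V(T) : |E(P_v) \cap F| \text{ is odd} \,\},
\]
so in particular $r \notin X$ (the trivial walk $P_r$ meets $F$ in $0$ edges). For any non-root vertex $v$ with parent $u$ joined by the edge $e$, we have $E(P_v) = E(P_u) \cup \{e\}$, and hence the parities of $|E(P_u) \cap F|$ and $|E(P_v) \cap F|$ differ if and only if $e \in F$. Equivalently, $e \in F$ if and only if exactly one of $u,v$ lies in $X$. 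This gives $[X, V(T)\setminus X] = F$, so switching $X$ in $(T, \sigma_1)$ produces $(T, \sigma_2)$, as required.

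An equally short alternative proceeds by induction on $|V(T)|$: remove a leaf $v$ incident to the edge $e$, apply induction to $T - v$ to obtain a switching $X'$ of $\sigma_1|_{T-v}$ matching $\sigma_2|_{T-v}$, and then, according to whether $\sigma_1(e)$ (possibly already switched at $u$) now agrees with $\sigma_2(e)$ or not, take $X = X'$ or $X = X' \bigtriangleup \{v\}$. There is no real obstacle to either argument; the essential point, and the reason this lemma underlies Lemma~\ref{lem:Zaslavsky}, is precisely that the absence of cycles in $T$ means the desired switching set $X$ is constrained only edge-locally and can be assembled without any global compatibility check.
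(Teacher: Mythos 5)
Your proof is correct: the set $X$ of vertices whose root-path meets the disagreement set $F$ an odd number of times does satisfy $[X,V(T)\setminus X]=F$, and either of your two arguments settles the lemma. The paper itself defers the proof to Zaslavsky's cited work, but your construction is essentially the same device the paper uses in the algorithm of Proposition~\ref{coro:SwitchClassPolytime} (label each vertex by the sign of its tree-path from the root under the product signature $\sigma_1\sigma_2$ and switch the negatively labelled vertices), so this is the intended approach rather than a genuinely different one.
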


 \begin{lemma}\label{lem:ForcingTree}
 	Given a connected graph $G$, a signature $\sigma$ of $G$, a spanning tree $T$ and signature $\tau$ of $T$ there is a unique signature $\sigma'$ which is identical to $\tau$ on the edges of $T$ and is switching equivalent to $\sigma$.
 \end{lemma}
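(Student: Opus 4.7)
My plan is to derive both existence and uniqueness from Lemma~\ref{lem:Tree}, relying on the connectivity of the spanning tree $T$ for the uniqueness part.

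For existence, I would first apply Lemma~\ref{lem:Tree} to the two signatures $\sigma|_{E(T)}$ and $\tau$ of the tree $T$: there is a vertex subset $X \subseteq V(T) = V(G)$ whose switching transforms $(T, \sigma|_{E(T)})$ into $(T, \tau)$. I would then define $\sigma'$ as the signature of $G$ obtained from $\sigma$ by switching this same set $X$ in $G$. Since the effect of switching $X$ on any edge depends only on which of its endpoints lie in $X$, the effect of this switching on the edges of $T$ is the same whether the switching is performed in $T$ or in $G$; hence $\sigma'|_{E(T)} = \tau$, and by construction $\sigma'$ is switching equivalent to $\sigma$.

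For uniqueness, suppose $\sigma'$ and $\sigma''$ are two signatures of $G$ that are switching equivalent to $\sigma$ and coincide with $\tau$ on $E(T)$. Then $\sigma''$ is switching equivalent to $\sigma'$, so there exists $Y \subseteq V(G)$ such that switching $Y$ in $(G, \sigma')$ yields $(G, \sigma'')$; concretely, an edge of $G$ flips its sign precisely when it lies in the cut $[Y, V(G) \setminus Y]$. Since $\sigma' = \sigma''$ on $E(T)$, no edge of $T$ lies in this cut. But $T$ is a connected spanning subgraph of $G$, so if $Y \notin \{\emptyset, V(G)\}$ then $T$ must contain an edge crossing the partition $\{Y, V(G) \setminus Y\}$, a contradiction. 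Hence $Y \in \{\emptyset, V(G)\}$, and in either case the cut $[Y, V(G) \setminus Y]$ in $G$ is empty, so $\sigma' = \sigma''$.

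The main subtle point I expect to have to emphasize is that uniqueness here is uniqueness of the signature $\sigma'$, not of the switching set $X$ used to produce it: switching by the full vertex set has no effect, so the two sets $X$ and $V(G) \setminus X$ always yield the same $\sigma'$. Apart from this small bookkeeping issue the proof is a direct combination of Lemma~\ref{lem:Tree} with the elementary observation that in a connected graph every proper nonempty vertex subset induces a nonempty cut that meets every spanning tree.
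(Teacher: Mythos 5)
Your proof is correct, but it follows a different route from the one the paper sketches. The paper indicates that Lemma~\ref{lem:ForcingTree} is proved via fundamental cycles: for each non-tree edge $e$, the sign of the fundamental cycle $C_e$ is a switching invariant, so once the signs on $T$ are pinned to $\tau$ the sign of $e$ is forced to be $\sigma(C_e)$ times the product of the $\tau$-signs of the tree edges of $C_e$; this simultaneously yields existence (define $\sigma'(e)$ by that formula) and uniqueness, and it has the advantage of producing the explicit formula that the paper reuses later (in Theorem~\ref{thm:oddedges} and in the discussion of presenting a switching class by a subset of a fundamental system of cycles). You instead get existence by extracting a switching set $X$ from Lemma~\ref{lem:Tree} applied to $\sigma|_{E(T)}$ and $\tau$ and performing the same switching in $G$ (correctly noting that the effect on a tree edge depends only on which endpoints lie in $X$), and you get uniqueness from the observation that two switching-equivalent signatures differ exactly on a cut $[Y, V(G)\setminus Y]$, and that any such cut with $Y$ a proper nonempty subset must meet the connected spanning subgraph $T$. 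This is a clean, more structural argument that avoids fundamental cycles entirely; your closing remark that uniqueness concerns the signature and not the switching set (since $X$ and $V(G)\setminus X$ induce the same cut) is exactly the right point to flag. Both arguments are valid; yours is arguably more self-contained, while the paper's yields the edge-by-edge formula it needs downstream.
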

 
 The proof of this lemma is based on the following key notions: given a spanning tree $T$ of $G$ and an edge $e$ of $G$ which is not in $T$, the subgraph induced by $e$ and edges of $T$ forms a unique cycle in $G$, denoted by $C_e$. The cycles $C_e$ are called the \emph{fundamental cycles} of $G$ with respect to $T$.  The set of fundamental cycles with respect to a spanning tree is called a \emph{fundamental system of cycles} for $G$. The lemma is then proved by observing the relation between the sign of $e$ and the sign of $C_e$.

The remark following \cite[Theorem 2]{Z81} states a stronger result: that there is a one-to-one correspondence between the classes of switching equivalent signatures on $G$ and subsets of a fundamental system of cycles of $G$, regarded as the positive fundamental cycles.  
It follows that the number of signatures on $G$ which are pairwise not switching equivalent is $2^{e-n+c}$ where $e$ is the number of edges, $n$ is the number of vertices and $c$ is the number of connected components. 
 
 These lemmas together also imply a fast algorithm to decide whether two signatures $\sigma_1$ and $\sigma_2$ on a graph $G$ are switching equivalent: Choose a spanning tree $T$. Switch both $\sigma_1$ and $\sigma_2$ to $\sigma'_1$ and $\sigma'_2$ so that they agree on $T$. Check if $\sigma'_1=\sigma'_2$.
This simple method, presented in \cite[Lemma 3.1]{Z82b}, is essentially the same as the test for balance found independently by Hansen~\cite{H78} and Harary and Kabell~\cite{HK80}, which switches $\sigma$ to be all positive on $T$ and deduces balance of $(G,\sigma)$ if and only if there is no negative edge in $(G,\sigma)$.
 
 \begin{proposition}\label{coro:SwitchClassPolytime}
 	It can be decided in time quadratic in the order of a graph $G$ whether two given signatures on $G$ are switching equivalent.
 \end{proposition}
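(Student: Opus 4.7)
The plan is to turn the normalization procedure described just before the statement into an explicit algorithm and verify its running time. Assume first that $G$ is connected; if not, apply the argument to each connected component. Compute a spanning tree $T$ of $G$ by a breadth-first or depth-first search from an arbitrary root $r$; this costs $O(n+m)$ where $n=|V(G)|$ and $m=|E(G)|$. Our goal is to replace each $\sigma_i$ ($i=1,2$) by a switching-equivalent signature $\sigma'_i$ that assigns $+$ to every edge of $T$, and then test $\sigma'_1 = \sigma'_2$.

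To produce $\sigma'_i$ from $\sigma_i$, traverse the vertices of $T$ in BFS order starting from $r$. When a non-root vertex $v$ is visited, let $e_v$ be the edge joining $v$ to its parent in $T$; if the current value of $\sigma_i(e_v)$ is $-$, switch at $v$, i.e., flip the sign of every edge incident with $v$. After $v$ is processed, $\sigma_i(e_v)=+$ and remains so, since no later switching operation touches $v$ again (each switching happens when the vertex is first visited). Once all vertices have been processed, every edge of $T$ carries sign $+$, so $\sigma'_i$ is the unique signature in the switching class of $\sigma_i$ that is all-positive on $T$, whose existence and uniqueness are guaranteed by Lemma~\ref{lem:ForcingTree}.

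Correctness of the test is then immediate: by Lemma~\ref{lem:Zaslavsky}, $\sigma_1$ and $\sigma_2$ are switching equivalent if and only if they determine the same set of positive cycles, and by Lemma~\ref{lem:ForcingTree} this happens if and only if their respective normalizations $\sigma'_1$ and $\sigma'_2$ coincide on every edge. Hence we simply compare $\sigma'_1(e)$ with $\sigma'_2(e)$ for each $e\in E(G)$ and answer \emph{yes} precisely when they agree everywhere.

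For the complexity, the BFS and the final edge-by-edge comparison each take $O(n+m)$ time. The normalization phase performs at most one switching per vertex; since switching at $v$ touches $\deg(v)$ edges, the total cost is $\sum_v \deg(v) = 2m = O(m)$. Summing over both signatures, the algorithm runs in time $O(n+m)$, which is $O(n^2)$ since $m\le\binom{n}{2}$. The only slightly delicate point, and the step I would double-check, is making sure the BFS ordering of switchings really gives the unique signature promised by Lemma~\ref{lem:ForcingTree}; this is exactly why vertices are processed from the root outward, so that each tree edge is fixed once and never disturbed by subsequent switchings.
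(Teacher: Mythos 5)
Your proof is correct and takes essentially the same approach as the paper: normalize the signatures with respect to a spanning tree (justified by Lemma~\ref{lem:ForcingTree}) and compare edge by edge, which is exactly the ``simple method'' the paper describes in the paragraph preceding the proposition. The paper's formal proof differs only cosmetically --- it first multiplies the two signatures and then tests balance of the product by labelling each vertex with the sign of its tree path from the root and switching the set of negative vertices in a single pass, rather than normalizing each signature separately by incremental per-vertex switchings --- but both arguments rest on the same spanning-tree normalization and yield the $O(n^2)$ bound.
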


\begin{proof}
Here is a 6-step algorithm, assuming $G$ is connected.  After Step 1, it is the algorithm of Hansen \cite{H78} and Harary--Kabell \cite{HK80}.

\begin{enumerate}[Step 1.]

\item Multiply the two signatures: $\sigma(e) = \sigma_1(e)\sigma_2(e).$  Time:  $O(n^2)$.  It is easy to see that $\sigma_1$ and $\sigma_2$ are equivalent if and only if $\sigma$ is balanced.

\item Find a rooted spanning tree, $(T,r)$.  Time:  $O(n^2)$.

\item Label each vertex $u$ by the sign in $(G,\sigma)$ of the path from $r$ to $v$ in $T$.  Time:  $O(n^2)$ at worst.  This step can be integrated into Step 2.

\item Find the set $X$ of negative vertices.  Time:  $O(n)$.

\item Negate the signs of edges in the cut $[X,V(G)\setminus X]$.  Time:  $O(n^2)$.  Note that now all edges in $T$ are positive.

\item Search for a negative edge.  Time:  $O(n^2)$.  

\end{enumerate}

If there is no negative edge, $\sigma$ is balanced and the original signatures are switching equivalent.  If there is a negative edge, $\sigma$ is unbalanced and the original signatures are not equivalent.
\end{proof}


\section{Positive and Negative Elements of the Cycle Space}\label{sec:CycleSpace}
Given a signed graph $\hG$, the set of cycles and the set of closed walks of the underlying graph $G$ are each divided into the two sets of positive and negative elements.  We think of the signature as determining sets of negative cycles and closed walks.

These subsets are among the fundamental properties of a signed graph, along with incidence and adjacency; thus we have two basic questions:
Given a subset $\cW$ of the closed walks in a graph $G$, does $\cW$ correspond to a set of negative (equivalently, positive) closed walks in $(G,\sigma)$ for some signature $\sigma$ of $G$? If yes, can we find one such $\sigma$ by an efficient algorithm?

One might emphasize the set of positive cycles or positive closed walks, especially because the former forms a subspace of the cycle space of $G$. However, in the study of signed graphs and specially in the study of homomorphism which is the central part of this work, negative cycles and closed walks are of higher importance. 
In fact, negative cycles extend the role which odd cycles play in many parts of graph theory.  
For this reason, some authors have used the term ``odd cycle" in a signed graph to refer to negative cycles, even when they have even length!

Observe that the set of all cycles of $G$ could be of exponential order (in the order of $G$) and that as long as $G$ contains an edge, the set of closed walks in $G$ is an infinite set. However, as we mentioned in the previous subsection and will discuss later in this section, the subset of negative closed walks can be finitely presented, e.g., by a subset of a fundamental system of cycles of $G$. 

Considering cycles rather than all closed walks, an analogue of the first question has already been addressed in \cite{Z81} using the following definition:

\begin{definition}[Theta Co-Additivity] 
		A set $\cC$ of cycles of a graph $G$ is \emph{theta co-additive} if, 
		for every theta subgraph $\Theta$ of $G$, the number of cycles in $\Theta$ 
		that belong to $\cC$ is even.
\end{definition}

It is then proved that:

\begin{lemma}[\cite{Z81}]\label{lem:Zaslavsky81}
	A set $\cC$ of cycles in a graph $G$ is the set of 
	negative cycles of $(G,\sigma)$ for some choice of signature $\sigma$ if and only if $\cC$ satisfies theta co-additivity.
\end{lemma}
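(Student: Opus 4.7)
The plan is to prove each direction separately: necessity by an algebraic identity, and sufficiency by a spanning-tree construction of $\sigma$ followed by an inductive argument showing $\cC$ is exactly the set of negative cycles.

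For \emph{necessity}, suppose $\cC$ is the set of negative cycles of some $(G,\sigma)$. In any theta subgraph with internally disjoint paths $P_1, P_2, P_3$ sharing endpoints, write $s_i := \sigma(P_i) \in \{+,-\}$ and $C_{ij} := P_i \cup P_j$. Then $\sigma(C_{ij}) = s_i s_j$, so $\sigma(C_{12})\sigma(C_{13})\sigma(C_{23}) = (s_1 s_2 s_3)^2 = +$, meaning an even number of the three cycles are negative, which is theta co-additivity.

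For \emph{sufficiency}, assume $\cC$ is theta co-additive; handling components separately, take $G$ connected. Fix a spanning tree $T$ and define $\sigma$ by $\sigma(e) = +$ for $e \in T$ and, for $e \notin T$, $\sigma(e) = -$ iff the fundamental cycle $C_e$ belongs to $\cC$. For any cycle $C$ with non-tree edges $e_1, \dots, e_k$, positivity of the tree edges gives $\sigma(C) = \prod_i \sigma(e_i)$, which equals $-$ iff an odd number of $C_{e_1}, \dots, C_{e_k}$ lie in $\cC$. The whole proof therefore reduces to the \emph{core claim}: $C \in \cC$ iff an odd number of the $C_{e_i}$ are in $\cC$.

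I would prove the core claim by induction on $k$. The case $k = 1$ is immediate since $C = C_{e_1}$. For $k \geq 2$, fix $e_1 \in C$ and consider the $uv$-paths $P := C - e_1$ and $P' := C_{e_1} - e_1$, where $u, v$ are the endpoints of $e_1$. If $P$ and $P'$ are internally disjoint, then $e_1 \cup P \cup P'$ is a theta subgraph whose three cycles are $C$, $C_{e_1}$, and a third cycle $C''$ having non-tree edges exactly $e_2, \dots, e_k$; theta co-additivity combined with the inductive hypothesis on $C''$ closes this case. The main obstacle is when $P$ and $P'$ share internal vertices, so $C \cup C_{e_1}$ is itself not a theta graph. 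I would address this by a secondary induction on the number of internal intersections of $P$ and $P'$: letting $w$ be the first internal vertex of $P \cap P'$ encountered walking from $u$ along $P$, the sub-paths $P_{uw}$ and $P'_{uw}$ are internally disjoint and, together with a suitable $u$-$w$ route through $v$ inside $C \cup C_{e_1}$, form a genuine smaller theta subgraph. Applying theta co-additivity there re-expresses the membership of $C$ in $\cC$ in terms of membership of strictly simpler cycles (fewer internal intersections or fewer non-tree edges), so iterating reduces everything to the internally disjoint case; the induction then closes and $\sigma$ realises $\cC$ as its set of negative cycles, completing the proof.
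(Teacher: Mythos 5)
Your necessity argument is correct, and your reduction of sufficiency to the core claim (``$C\in\cC$ iff an odd number of the fundamental cycles $C_{e_i}$ of the non-tree edges of $C$ lie in $\cC$'') is the right move; the internally disjoint case of your induction also goes through. The genuine gap is Case B, where $P=C-e_1$ and the tree path $P'=C_{e_1}-e_1$ meet internally. Theta co-additivity is a hypothesis only about honest theta subgraphs --- three pairwise \emph{internally disjoint} $u$--$v$ paths --- and when $P$ and $P'$ intersect, $P\cup P'$ is not a cycle but an even subgraph that may split into several cycles, so no theta relation involving $C$, $C_{e_1}$ and a single ``third cycle'' is available. Your proposed repair does not produce a theta subgraph either: with $w$ the first internal intersection along $P$, the paths $P_{uw}$ and $P'_{uw}$ are indeed internally disjoint, but the third $u$--$w$ route you suggest ($e_1$ followed by the reverse of $P_{wv}$) can meet $P'_{uw}$ at internal vertices, since nothing prevents the tail of $P$ beyond $w$ from re-entering the tree path between $u$ and $w$; there are also degenerate subcases (e.g., $P_{uw}=P'_{uw}$ when $C$ initially runs along $T$) in which no theta subgraph on $\{u,w\}$ exists at all. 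Finally, even where some theta subgraph can be extracted, you have not exhibited a well-founded measure under which its other two cycles are ``strictly simpler,'' so the secondary induction is asserted rather than set up. This is not cosmetic: passing from the theta relation, which only sees internally disjoint configurations, to arbitrary mod-$2$ combinations of fundamental cycles is exactly the substance of the lemma.

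For calibration: the paper does not prove this lemma itself (it is quoted from Zaslavsky's 1981 paper), but it does prove the closed-walk analogue, Theorem~\ref{thm:3-walks}, and the contrast is instructive. There the exclusive $3$-walk property is imposed on \emph{arbitrary} triples of $xy$-walks, with no disjointness requirement, so the inductive step simply compares $W=QW''$, $W'=PW''$ and $C_e$ via the three walks $Q$, $P$, $(W'')\inv$ even though these may intersect freely --- precisely the step that breaks in your cycle-only setting. To complete your proof you would need either to carry out the intersection analysis in full (choosing the chord $e_1$ and the splitting vertex more carefully so that a genuine theta subgraph with a controlled third cycle always appears, with a decreasing parameter), or to follow Zaslavsky's original argument in \cite{Z81}; as written, the sufficiency direction is incomplete.
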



\subsection{Systems of closed walks}\label{sec:signswalks}

As we will see later, in the study of homomorphisms of signed graphs we may have to consider closed walks that are not cycles; for example see Lemma~\ref{lem:NoHomGirth} and Figure~\ref{fig:girth-by-closed walks}. Thus we introduce the following notion which, working with the set of closed walks rather than just cycles, extends the notion of theta co-additivity. 
Then we provide a similar tool to test whether a given set $\cW$ of closed walks in $G$ is the exact set of negative closed walks  of $(G, \sigma)$ for some choice of a signature $\sigma$. This test can, in particular, be used for a NO-certificate. We then present an algorithm by which we can answer the algorithmic part of the question and produce one such signature if the output is YES. Our algorithm is efficient with the condition that $\cC$ is presented efficiently and that testing membership in $\cC$ can be done efficiently. Our first test is based on the following definitions.

\begin{definition}[Rotation property]\label{def:rotation} A set $\cW$ of closed walks is said to have the \emph{rotation property} if, for each closed walk $PQ$ (the concatenation of walks $P$ and $Q$), 
either $PQ$ and $QP$ are both in $\cW$ or neither of them is in $\cW$.
\end{definition}

\begin{definition}[Exclusive 3-walk property]\label{def:ex3walk} 
		A set $\cW$ of closed walks in a graph $G$ satisfies the \emph{exclusive 3-walk property} if 
		it has the rotation property and, 
		for every two vertices $x$ and $y$ (not necessarily distinct) of $G$ and
		every three $xy$-walks $W_1$, $W_2$ and $W_3$ of $G$, 
	$$W_1W_2\inv \in \cW \Rightarrow (W_1W_3\inv \in \cW) \veebar (W_2W_3\inv \in \cW),$$
		where $\veebar$ denotes the ``exclusive or'' operator.
		That is to say, given any two vertices $x$ and~$y$, among the three closed walks induced by 
		any three $xy$-walks, an even number is in the set $\cW$, i.e., either none of them or exactly two of them. 
\end{definition}

(We thank Andrzej Szepietowski \cite{Sz} and an anonymous referee for pointing out the need to assume the rotation property.)

We give algebraic formulations.  The \emph{characteristic sign function} of closed walks with respect to $\cW$ is
 $$	\sigma_\cW(W) = \begin{cases}
	+  &\text{if }W \notin \cW, \text{ and}\\ 
	- &\text{if } W \in \cW. \end{cases}
	$$
In terms of this function, 
Definition \ref{def:rotation} becomes the formula
\begin{equation}
	\sigma_\cW(PQ) = \sigma_\cW(QP)
\label{eq:rotation}
\end{equation}
if $P$ is an $xy$-walk and $Q$ is a $yx$-walk,
and the implication of Definition \ref{def:ex3walk} becomes the formula
\begin{equation}
	\sigma_\cW(W_1W_2\inv) \sigma_\cW(W_1W_3\inv) \sigma_\cW(W_2W_3\inv)  = +.
\label{eq:ex3walk}
\end{equation}

Using this definition, we can characterize the sets $\cW$ of closed walks in a graph $G$ that can be the set of negative closed walks in $(G,\sigma)$ for some choice of signature $\sigma$, by the fact that they satisfy the exclusive 3-walk property (see Theorem~\ref{thm:3-walks}). 
As it is easy to prove that the set of negative closed walks of a signed graph satisfies the property, our main goal is to show that if a set $\cW$ of closed walks in $G$ satisfies the exclusive 3-walk property then $\cW$ is the set of negative closed walks in $(G,\sigma)$ for a signature $\sigma$ of $G$. Being such a set would imply certain properties, for example that a trivial walk cannot be a member of $\cW$ as it is positive by definition. We collect such properties in the following proposition and provide a proof solely based on the exclusive 3-walk property.


\begin{proposition}\label{prop:Exclusive3WalkBasicProperties}

Let $G$ be a graph and let $\cW$ be a set of closed walks which satisfies the exclusive 3-walk property. Then $\cW$ satisfies the following properties:

\begin{itemize}
	\item[{\rm[i]}] No trivial walk is in $\cW$. 
		(I.e., $\sigma_\cW(W_0) = +$ for every trivial walk $W_0$.)
	\item[{\rm[ii]}] For any walk $W$, the closed walk $WW\inv$ is not in $\cW$. 
		(I.e., $\sigma_\cW(WW\inv)=+$.)
	\item[{\rm[iii]}] For any closed walk $W$, we have $W\inv \in \cW$ if and only if $W\in \cW$.
		(I.e., $\sigma_\cW(W\inv) = \sigma_\cW(W)$.)
	\item[{\rm[iv]}] For any pair of closed walks $W$ and $W'$ with the same end point $v$, $WW'$ is in $\cW$ if and only if exactly one of $W$ and $W'$ is in $\cW$. 
	(I.e., $\sigma_\cW(WW') = \sigma_\cW(W) \sigma_\cW(W')$.)

	\item[{\rm[v]}] Given a closed walk $W$ starting at $y$ and an $xy$-walk $P$, $PWP\inv$ is in $\cW$ if and only if $W$ is in $\cW$.
		(I.e., $\sigma_\cW(PWP\inv) = \sigma_\cW(W)$).


\end{itemize}	
\end{proposition}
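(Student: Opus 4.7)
The plan is to establish the five items in the order [i], [ii], [iv], [iii], [v], each time applying the exclusive 3-walk property \eqref{eq:ex3walk} to a shrewd choice of three $xy$-walks $W_1,W_2,W_3$ (often forcing one or two of them to be trivial), and using the rotation formula \eqref{eq:rotation} only for [v]. Throughout, I write $\sigma := \sigma_\cW$.

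First I would dispose of [i] by taking $x=y=v$ and letting $W_1=W_2=W_3=W_0$, the trivial walk at $v$. Since $W_0=W_0\inv$ and the concatenation of two trivial walks is trivial, each $W_iW_j\inv$ equals $W_0$, so \eqref{eq:ex3walk} becomes $\sigma(W_0)^3=+$, forcing $\sigma(W_0)=+$. Exactly the same trick handles [ii]: for any $xy$-walk $W$, choose $W_1=W_2=W_3=W$, so every $W_iW_j\inv$ equals $WW\inv$, and \eqref{eq:ex3walk} gives $\sigma(WW\inv)^3=+$.

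Next I would prove [iv], and for this the key is to slip a trivial walk into the $W_i$. Let $W$ and $W'$ be two closed walks at $v$, and take as three $vv$-walks $W_1=W$, $W_2=W_v$ the trivial walk at $v$, and $W_3=W'^{\,-1}$. Computing: $W_1W_2\inv=W$, $W_1W_3\inv=WW'$, $W_2W_3\inv=W'$ (since $(W'^{\,-1})\inv=W'$ from the definition of the inverse walk, which is a formal reversal and involutive by construction). Then \eqref{eq:ex3walk} gives $\sigma(W)\sigma(WW')\sigma(W')=+$, i.e.\ $\sigma(WW')=\sigma(W)\sigma(W')$. With [iv] in hand, [iii] is immediate: for a closed walk $W$, combine [iv] applied to $W$ and $W\inv$ with [ii] to obtain $\sigma(W)\sigma(W\inv)=\sigma(WW\inv)=+$, so $\sigma(W\inv)=\sigma(W)$.

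Finally, for [v] I would invoke the rotation property for the first time. Writing $PWP\inv$ with $P$ an $xy$-walk and $W$ a closed walk at $y$, set $P'=P$ and $Q=WP\inv$, an $xy$-walk and a $yx$-walk respectively; then \eqref{eq:rotation} yields $\sigma(PWP\inv)=\sigma(WP\inv P)$. Both $W$ and $P\inv P$ are closed walks at $y$, so [iv] gives $\sigma(WP\inv P)=\sigma(W)\sigma(P\inv P)$, and [ii] applied to $P\inv$ gives $\sigma(P\inv P)=+$, leaving $\sigma(PWP\inv)=\sigma(W)$.

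The only real subtlety to watch for is the ordering: one must prove [iv] before [iii] (so that the circularity of deducing each from the other is avoided), and one must prove [ii] before [iv]$\to$[iii] so that $\sigma(WW\inv)=+$ is available. Beyond that, the mild point is recognising that the exclusive 3-walk property really does allow constant or trivial choices among $W_1,W_2,W_3$, so that the list of properties collapses into three applications of \eqref{eq:ex3walk} and one application of \eqref{eq:rotation}.
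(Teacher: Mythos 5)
Your proof is correct and follows essentially the same route as the paper: the same substitutions into the exclusive 3-walk identity (all three walks equal, or one of them trivial and one an inverse) yield [i], [ii] and [iv], and [v] is obtained by rotating $PWP\inv$ to $WP\inv P$ and then applying [iv] and [ii], exactly as in the paper. The only deviation is that you derive [iii] from [iv] and [ii] instead of by a direct application of the 3-walk property; this reordering is sound (since your proof of [iv] uses only the involutivity of walk reversal, not [iii]) and is, if anything, slightly cleaner than the paper's direct argument for [iii].
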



\begin{proof}
	By taking $W$ as a trivial walk we observe that the first claim is a special case of the second. Thus we jump to proving the second claim. For this, we take $W_1=W_2=W_3=W$. Then the three closed walks in the exclusive 3-walk property are all $WW\inv$ but a condition for $\cW$ is that all three cannot be in $\cW$. 

	For [iii], let $v$ be the start of $W$ and for $x=y=v$ take $W_1=W$ and $W_2=W\inv$ and let $W_3$ be the trivial walk at $v$. Then the three $xy$-walks to be considered are $W$, $W\inv$ and $WW\inv$. By part [ii], the last one is never in $\cW$, thus we have our claim.  
	
	For [iv] take $x=y=v$ and consider $W_1=W$, $W_2=v$ (the trivial walk at $v$) and $W_3=(W')\inv$. Then the three walks considered in the exclusive 3-walk property are $W$, $W'$ and $WW'$. 
	By Equation \ref{eq:ex3walk}, $\sigma_\cW(W) \sigma_\cW(W') \sigma_\cW(WW') = +.$

	For [v] rotate $PWP\inv$ to $WP\inv P$; then apply [iv] and [ii].
%
\end{proof}

\begin{theorem}\label{thm:3-walks}
	A set $\cW$ of closed walks in a graph $G$ is the set of negative closed walks in $(G,\sigma)$ 
	for some choice of signature $\sigma$ if and only if $\cW$ satisfies the exclusive 3-walk property.
\end{theorem}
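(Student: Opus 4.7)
The plan is to prove each direction separately. The forward direction is a direct verification: if $\cW$ equals the set of negative closed walks of a signature $\sigma$, then $\sigma(W\inv) = \sigma(W)$ (the same multiset of edges is traversed), whence $\sigma(PQ) = \sigma(P)\sigma(Q) = \sigma(QP)$ gives the rotation property and
\[
\sigma(W_1W_2\inv)\,\sigma(W_1W_3\inv)\,\sigma(W_2W_3\inv) = \sigma(W_1)^2\,\sigma(W_2)^2\,\sigma(W_3)^2 = +
\]
gives the exclusive 3-walk identity.

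For the reverse direction I would reduce to $G$ connected (treating components independently) and construct $\sigma$ explicitly. Fix a spanning tree $T$ rooted at a vertex $r$; declare every tree edge positive; and for each non-tree edge $e$ with endpoints $u,v$ set $\sigma(e) := \sigma_\cW(C_e)$, where $C_e := T_u e T_v\inv$ is the fundamental cycle viewed as a closed walk based at $r$ and $T_x$ denotes the unique tree path from $r$ to $x$. The heart of the argument is then to prove that $\sigma_\cW(W) = \sigma(W)$ for every closed walk $W$ of $G$. A preliminary lemma I would establish first is that any closed walk $W$ contained entirely in $T$ satisfies $\sigma_\cW(W) = +$. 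This follows by induction on the length of $W$, because any non-trivial closed walk in a tree must (possibly after a rotation) contain two consecutive edges $e e\inv$, and properties [ii] and [iv] of Proposition~\ref{prop:Exclusive3WalkBasicProperties} together with the rotation property let me excise such a block and shorten $W$.

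For the main verification I would induct on the number of non-tree edges appearing in $W$. Using property [v] and the rotation property, I first rebase $W$ at $r$ without changing either $\sigma_\cW(W)$ or $\sigma(W)$. Writing $W = PfQ$ with $f$ the first non-tree edge, $P$ a tree walk from $r$ to the tail $u$ of $f$, and $Q$ a walk from the head $v$ back to $r$, I split $W = (PfT_v\inv)(T_v Q)$ into two closed walks based at $r$. Property [iv] yields $\sigma_\cW(W) = \sigma_\cW(PfT_v\inv)\cdot\sigma_\cW(T_v Q)$; the inductive hypothesis handles $T_v Q$ (which uses one fewer non-tree edge), so it suffices to show that $\sigma_\cW(PfT_v\inv) = \sigma_\cW(C_f) = \sigma(f)$. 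I expect the main obstacle to be precisely this step, which I would obtain by applying the exclusive 3-walk identity to the three $rv$-walks $Pf$, $T_u f$, $T_v$: the three resulting closed walks are $P f f\inv T_u\inv$, $P f T_v\inv$, and $C_f$, and the first collapses to sign $+$ by rotating it to $(f f\inv)(T_u\inv P)$ and applying [ii], [iv], and the preliminary tree lemma. Combining everything gives $\sigma_\cW(W) = \sigma(f)\,\sigma(T_v Q) = \sigma(P f Q) = \sigma(W)$, closing the induction.
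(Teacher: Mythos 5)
Your proof is correct and follows essentially the same route as the paper's: the same forward verification, the same signature built from a spanning tree with all tree edges positive and $\sigma(e)=\sigma_\cW(C_e)$ on non-tree edges, the same base case showing tree walks are positive by excising backtracks, and the same induction on the number of non-tree edges driven by the exclusive 3-walk identity. The one slip is the identification $W=(PfT_v\inv)(T_vQ)$, which is false as an equality of walks (the right side contains the extra block $T_v\inv T_v$); however, the sign identity $\sigma_\cW(W)=\sigma_\cW(PfT_v\inv)\,\sigma_\cW(T_vQ)$ that you want from it follows directly from one application of the exclusive 3-walk property to the three $rv$-walks $Pf$, $T_v$ and $Q\inv$ (paralleling the paper's own use of the property with the walks $xey$, the tree path, and $(W'')\inv$), so the argument closes without further work.
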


\begin{proof}
We will prove the theorem for connected graphs. For graphs with more than one connected component we may apply the proof to each connected component of the graph.

First we consider a signed graph $(G,\sigma)$ and we show that the set $\cW^{-}$ of the negative closed walks in $(G, \sigma)$ satisfies the exclusive 3-walk property. Consider two vertices $x$ and $y$ (not necessarily distinct) and let $W_1$, $W_2$ and $W_3$ be three $xy$-walks in $(G, \sigma)$. Then 
$$\sigma(W_1W_2\inv) \sigma(W_1W_3\inv) \sigma (W_2W_3\inv) = \sigma(W_1)^2 \sigma (W_2)^2 \sigma(W_3)^2 = +,$$
thus verifying the exclusive 3-walk property in the form of Equation \eqref{eq:ex3walk}.	
	
	To prove the converse, let $\cW$ be a set of closed walks in $G$ satisfying the exclusive 3-walk property. We need to show that there exists a signature $\sigma$ of $G$ such that the set of negative closed walks in $(G, \sigma)$ is exactly $\cW$.  
A cycle $C$ of $G$ can be viewed as a closed walk starting at a vertex of the cycle. By Proposition~\ref{prop:Exclusive3WalkBasicProperties} [vi], whether this closed walk is in $\cW$ or not is independent of the choice of the starting vertex. Thus we may simply talk about a cycle being in $\cW$. We then use a fundamental system of cycles of $G$ to define a signature $\sigma$. 
	Let $T$ be a spanning tree of $G$ and for each edge $e \notin T$ let $C_e$ be the fundamental cycle, with respect to $T$, that contains $e$.  Define a signature $\sigma$ by
$$	\sigma(e) = \begin{cases}
	+	&\text{if } e \in T, \\
	\sigma_\cW(C_e)	&\text{if } e \notin T.
	\end{cases}
	$$
	
	It remains to show that $\sigma(W) = \sigma_\cW(W)$ for every closed walk $W$.  We prove this by induction on the number of edges that are not in $T$.  By Proposition~\ref{prop:Exclusive3WalkBasicProperties} a trivial walk $W_0$ is not in $\cW$ so $\sigma_\cW(W_0) = + = \sigma(W_0)$.  Let $W$ be a closed walk which uses only edges of $T$.  As $T$ has no cycle, the induced subgraph by the edges of $W$ must have a vertex $y$ of degree 1. Let $x$ be the neighbour of this vertex in $T$. Let $P$ be the walk $xey$ where $e=xy$. Then $W$ can be written as $W'PP\inv$, where $W'$ has two fewer edges than $W$. By Proposition~\ref{prop:Exclusive3WalkBasicProperties} 
	$\sigma_\cW(W') = \sigma_\cW(W)$. By repeating this process we conclude that $\sigma_\cW(W) = \sigma_\cW(W_0) = + = \sigma(W)$.

	To complete the induction we consider a closed walk $W$ which uses $k$ edges not in $T$ (counting repetitions), $k\geq 1$. Let $e=xy$ be one such edge. We build a new walk $W'$ which uses $k-1$ edges not in $T$ and such that $\sigma_\cW(W') = \sigma_\cW(W) \sigma(e)$.  By Proposition~\ref{prop:Exclusive3WalkBasicProperties} we may rotate and, if necessary, invert $W$ so that it starts with $Q=xey$ and continues with $W''$, so $W = QW''$.  
	Let $P$ be the $xy$-path obtained from $C_e$ after removing the edge $e$. Observe that all edges of $P$ are in $T$. Let $W' = PW''$.  Note that $W'$ has one less edge not in $T$ than $W$ does.  

Consider the following three $xy$-walks: $W_1 = Q$, $W_2=P$ and $W_3 = (W'')\inv$. Then $W_1W_2\inv = C_e$, $W_1W_3\inv = W$ and $W_2W_3\inv = W'$.  By Equation \eqref{eq:ex3walk}, 
$$\sigma_\cW(W_1W_2\inv) \sigma_\cW(W_1W_3\inv) \sigma_\cW(W_2W_3\inv) = +;$$
more simply, 
$\sigma_\cW(C_e) \sigma_\cW(W) \sigma_\cW(W') = +.$
So, 
\begin{align*}
\sigma_\cW(W) &= \sigma_\cW(Ce) \sigma_\cW(W') = \sigma(e) \sigma_\cW(W') 
&\text{by the definition of $\sigma$,} \\
&= \sigma(e) \sigma(W') 
&\text{by induction,}\\
&= \sigma(e) \sigma(P) \sigma(W'') = \sigma(e) \sigma(W'') 
&\text{because all edges of $P$ are in $T$,}\\
&= \sigma(W).
&\qedhere
\end{align*}
\end{proof}

\subsection{Algorithmics of closed walk systems}\label{sec:algwalks}

Given a closed walk $W$ of graph $G$, let $C_{W}$ be the subgraph of $G$ which is induced by edges which appear on $W$ an odd number of times. 
Then $C_{W}$ has no repeated edges, so it is an edge-disjoint union of cycles.  Equivalently, every vertex has even degree in $C_W$; such a subgraph is called an \emph{even subgraph}. Given a signature $\sigma$ on $G$, the product of the signs of cycles in $C_{W}$ determines whether $W$ is a negative closed walk or a positive one.
Observe that while the set of closed walks in a graph with at least one edge is an infinite set, the set of even subgraphs is a finite set because each even subgraph uses each edge at most once. 
Thus in practice we may use a chosen set of even subgraphs (a finite set) to define a choice of negative (or positive) closed walks (an infinite set).  The question is then, what conditions on a set of even subgraphs make it the set of negative (or positive) even subgraphs $C_W$ of some signature on $G$.

Consider a subset $\cW$ of closed walks in $G$ that may, or may not, be the set of all negative closed walks in some signed graph on $G$.  Then the choice of $\cW$ corresponds to a choice of $C_{W}$'s. Thus a first necessary condition for $\cW$ to be the set of negative closed walks in $(G,\sigma)$ for some choice of $\sigma$ is that the two sets $\{C_{W}\mid W\in \cW\}$ and $\{C_{W} \mid W\notin \cW\}$ have no common element. 

Observe that the set of all even subgraphs corresponds to the (binary) cycle space of the graph $G$, which is the set of binary vectors in $\mathbb{Z}_2^{E}$, of length $|E(G)|$, which are in the null space of the vertex-edge incidence matrix of $G$ regarded as a binary matrix.  The (binary) characteristic vector of a closed walk $W$ equals that of the corresponding even subgraph $C_W$.  
If we identify a subset of edges with its characteristic vector, then binary addition corresponds to set summation (that is, symmetric difference).  
Assume $G$ is connected and let $W_1$ and $W_2$ be two closed walks in $G$. By adding a $PP\inv$ walk and rotating the walks, as needed, we may assume $W_1$ and $W_2$ are starting from the same vertex, observing that the $C_{W_i}$ will remain the same.  Then the set sum $C_{W_1}+C_{W_2}$, which corresponds to the binary sum of the characteristic vectors of $W_1$ and $W_2$, equals $C_{W_1W_2}$ (noting that $W_1W_2$ is only defined if they have a common starting point).

For a signed graph $(G, \sigma)$ the even subgraphs whose elements correspond to positive closed walks in $(G, \sigma)$ form a subspace of the cycle space of codimension at most 1, and every such subspace is the set of even subgraphs that are positive in some signature of $G$ \cite{Z81}. We may then use the notion of basis in linear algebra to present a choice of negative even subgraphs using only a linear number of edges, by using the notion of a fundamental system of cycles with respect to a spanning tree.  Let $T$ be a spanning tree of $G$.  For an edge $e \notin E(T)$, the \emph{fundamental cycle} of $e$ with respect to $T$ is the unique cycle contained in $T \cup e$.  An elementary consequence of Lemma \ref{lem:Zaslavsky} is that, given a signature $\sigma$ on $G$ with negative edge set $E^-$, defining $\sigma_T(e) := \sigma(C_e)$ if $e \notin E(T)$ and $\sigma_T(e) := +$ if $e \in E(T)$, then $\sigma_T$ is switching equivalent to $\sigma$.

Using this terminology, we state in the next theorem how a subset of all the closed walks (an infinite set) satisfying the exclusive 3-path property can be presented by a choice of a negative edge set. 

\begin{theorem}\label{thm:oddedges}
	Let $G$ be a connected graph with a spanning tree $T$. Let $\cW$ be a subset of the closed walks in $G$ that satisfies the exclusive 3-path property. Let $E^{-}$ be the set of edges $e\notin E(T)$, such that $C_e$ is in $\cW$. Then a closed walk $W$ is in $\cW$ if and only if $C_W$ can be written as the sum of $C_e$'s where an odd number of $e$'s in is $E^{-}$.  
\end{theorem}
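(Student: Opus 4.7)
The plan is to reduce the statement to Theorem~\ref{thm:3-walks} together with the fact that the fundamental cycles with respect to $T$ form a basis of the binary cycle space of $G$. First I would apply Theorem~\ref{thm:3-walks} to obtain a signature $\sigma$ whose set of negative closed walks is exactly $\cW$; by inspecting the construction used in that proof, I can arrange $\sigma$ so that $\sigma(e)=+$ for $e\in E(T)$ and $\sigma(e)=\sigma_\cW(C_e)$ for $e\notin E(T)$. Thus the set of negative edges of $\sigma$ is precisely $E^-$.

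Next I would observe that for any closed walk $W$ the sign $\sigma(W)$ is the product of $\sigma(e)$ over the edges of $W$ counted with multiplicity, and since signs lie in $\{+,-\}$ only the parity of each multiplicity matters. Therefore $\sigma(W)=\prod_{e\in E(C_W)}\sigma(e)$. Because every tree edge is positive and $E^-\subseteq E(G)\setminus E(T)$, this simplifies to $(-1)^{|E(C_W)\cap E^-|}$, so $W\in\cW$ if and only if $|E(C_W)\cap E^-|$ is odd.

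To convert this into the stated form I would invoke the standard fact that $\{C_e : e\notin E(T)\}$ is a basis of the binary cycle space of $G$. Since $C_W$ is an even subgraph, it admits a unique expression $C_W=\sum_{e\in S_W} C_e$; inspecting which non-tree edges occur on the right forces $S_W=E(C_W)\setminus E(T)$, because $C_e$ is the only basis element containing the non-tree edge $e$. Since $E^-\subseteq E(G)\setminus E(T)$, the number of summands whose indexing edge lies in $E^-$ is exactly $|E(C_W)\cap E^-|$, and the equivalence of parities with $W\in\cW$ follows from the previous paragraph.

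The only real obstacle is a minor bookkeeping point, namely to pin down that the fundamental-cycle decomposition of $C_W$ is supported exactly on the non-tree edges of $C_W$. This is immediate from basis uniqueness, but it deserves an explicit sentence because the theorem's statement turns this support set into the quantity being counted modulo $2$.
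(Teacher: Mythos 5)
Your proposal is correct and follows essentially the same route as the paper's (very terse) proof: invoke Theorem~\ref{thm:3-walks} to realize $\cW$ as the negative closed walks of a signature that is positive on $T$, observe that $\sigma(W)=\prod_{e\in E(C_W)\setminus E(T)}\sigma(e)$, and match this with the fundamental-cycle decomposition. The one place you go beyond the paper is in explicitly justifying that the decomposition of $C_W$ is supported exactly on $E(C_W)\setminus E(T)$ via basis uniqueness --- a bookkeeping step the paper leaves implicit, and which is worth the sentence you give it.
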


\begin{proof}
By Theorem \ref{thm:3-walks}, the assumption on $\cW$ is equivalent to assuming that it is the class of negative closed walks of a signature $\sigma$ on $G$.  

If that is true, by Lemma \ref{lem:ForcingTree} we may switch $\sigma$ so that every edge of $T$ is positive.  Then $\sigma(W) = \sigma(C_W) = \prod_{e \in E(C_W) \setminus E(T)} \sigma(e).$  
\end{proof} 


\section{Homomorphisms}\label{sec:Homomorphism}

\subsection{Graphs}
The main goal of this work is the study of homomorphisms of signed graphs with special focus on improving terminology. To this end we first offer an alternative definition of a graph homomorphism as follows (recall that we are allowing loops and multiple edges): 

\begin{definition} 
A \emph{homomorphism} of a graph $G$ to a graph $H$ is a mapping $f$ which maps vertices of $G$ to vertices $H$ and edges of $G$ to the edges of $H$ and preserves the incidence relation between vertices and edges.  We indicate the existence of a homomorphism by writing $G \to H$.
\end{definition}

This contrasts with the usual definition, in which graphs are assumed to be simple, the vertex mapping is assumed to preserve adjacency (but not non-adjacency) and then the edge mapping is determined by the vertex mapping.  Since signed graphs have multiple edges that may have different signs, that kind of definition is inadequate.


\subsection{Signed graphs}

We may now define the central concept of this work:

\begin{definition} 
A \emph{homomorphism} of a signed graph $(G,\sigma)$ to a signed graph $(H, \pi)$, written $(G,\sigma) \to (H, \pi)$, is a graph homomorphism that preserves the signs of closed walks.   
	More precisely, this is a \emph{switching homomorphism} of signed graphs.
	We indicate the existence of a homomorphism by the notation $(G,\sigma) \to (H, \pi)$.

\end{definition}

Intuitively speaking, since we consider the signs of cycles of a signed graph as one of its determining characteristics, we define a homomorphism of signed graphs to be a mapping that preserves the main structures: incidences and cycle signs.

One can redefine homomorphisms of signed graphs in terms of edge signs.  A \emph{edge-sign-preserving homomorphism} of a signed graph $(G,\sigma)$ to a signed graph $(H, \pi)$ is a homomorphism of underlying graphs, $G \to H$, that preserves the signs of edges.

\begin{theorem}\label{thm:HomSecondDefinition}
	
	A homomorphism $(G,\sigma) \to (H,\pi)$ of signed graphs consists of a switching of $(G,\sigma)$ followed by an edge-sign-preserving homomorphism to $(H,\pi)$, and conversely.
\end{theorem}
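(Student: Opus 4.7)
The plan is to prove both implications of the equivalence, with the converse being the easier half. For the converse, suppose we first switch $(G,\sigma)$ to some switching-equivalent signature $(G,\sigma')$ and then apply an edge-sign-preserving homomorphism $g: (G,\sigma') \to (H,\pi)$. Since switching does not alter the sign of any closed walk (the fundamental fact stated in Section~\ref{sec:sgterm}), the composition preserves signs of closed walks in $(G,\sigma)$, so it is a homomorphism of signed graphs by definition.

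For the forward direction, let $f \colon (G,\sigma) \to (H,\pi)$ be a homomorphism as defined. I would define a target signature $\sigma^\ast$ on $G$ by $\sigma^\ast(e) := \pi(f(e))$ for every edge $e$ of $G$; the goal is to show that $\sigma^\ast$ is switching equivalent to $\sigma$, because then after performing that switching the map $f$ becomes, tautologically, an edge-sign-preserving homomorphism from $(G,\sigma^\ast)$ to $(H,\pi)$. By Lemma~\ref{lem:Zaslavsky}, it suffices to check that $\sigma$ and $\sigma^\ast$ assign the same sign to every cycle of $G$. For any cycle $C = e_1 e_2 \cdots e_k$ in $G$, viewed as a closed walk, its image under $f$ is the closed walk $f(e_1)f(e_2)\cdots f(e_k)$ in $H$, and
\[
\sigma^\ast(C) = \prod_{i=1}^{k} \pi(f(e_i)) = \pi(f(C)).
\]
Because $f$ preserves signs of closed walks, $\pi(f(C)) = \sigma(C)$, and so $\sigma^\ast(C) = \sigma(C)$ as required.

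The only real obstacle is the step invoking Lemma~\ref{lem:Zaslavsky}, since the whole argument hinges on being allowed to conclude that equality of cycle signs produces an actual switching. Once that tool is in hand, the rest is routine: after switching $\sigma$ to $\sigma^\ast$, the map $f$ sends each edge $e$ to an edge $f(e)$ with $\sigma^\ast(e) = \pi(f(e))$, which is exactly the edge-sign-preserving condition. Combining the two directions yields the claimed characterization.
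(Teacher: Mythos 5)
Your proposal is correct and follows essentially the same route as the paper: the easy direction uses that switching and edge-sign-preserving maps both preserve closed-walk signs, and the forward direction pulls back the signature via $\sigma^\ast(e) := \pi(f(e))$ and applies Lemma~\ref{lem:Zaslavsky} to the agreement of cycle signs. No gaps.
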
 

\begin{proof}
 We first show the easy part.  Suppose $(G,\sigma)$ switches to $(G,\sigma')$ such that there is a graph homomorphism $\phi:  G \to H$ that preserves edge signs.  Since $\phi$ preserves edge signs, it preserves the signs of all walks.  The signs of closed walks are the same in $(G,\sigma)$ and in $(G, \sigma')$, so $\phi$, considered as a mapping of signed graphs $(G,\sigma) \to (H,\pi)$, preserves the signs of closed walks.

	For the contrary, suppose $\phi$ is a signed graph homomorphism $(G,\sigma) \to (H, \pi)$; thus, $\phi$ is a graph homomorphism $G \to H$ and preserves the signs of closed walks.  Let $\sigma'$ be a new signature which assigns to each edge $e$ of $G$ the sign of $\phi(e)$ in $(H, \pi)$.  Then $\phi$, as a mapping from $(G,\sigma') \to (H,\pi)$, preserves the signs of closed walks and in particular the signs of cycles.  Each cycle $C$ in $G$ has the same sign with respect to $\sigma$ and $\sigma'$, as both signs equal the sign of $\phi(C)$ in $(H, \pi)$.  It then follows from Lemma~\ref{lem:Zaslavsky} that $\sigma'$ is a switching of $\sigma$. 
\end{proof}

The definition of a homomorphism of one signed graph to another, contained in this theorem, is the one originally given in~\cite{NRS15}, and in practice is easier to use.  To be more precise, in \cite{NRS15} a homomorphism of $(G,\sigma)$ to $(H, \pi)$ is a mapping $f=(f_1,f_2, f_3)$ where $f_1: V(G) \to \{+,-\}$ specifies  for each vertex $x$ whether $x$ is switched or not, $f_2: V(G) \to V(H)$ specifies to which vertex of $V(H)$ the vertex $x$ is mapped to and, similarly, $f_3:E(G)\to E(H)$ is the edge mapping. When working on graphs with no parallel edges, one may simply write $(f_1, f_2)$ as $f_3$ would be uniquely determined by $f_2$.

\subsubsection{Bipartite universality}

What is rather surprising, and not so obvious, is that the restriction to signed bipartite graphs also captures the classic notion of homomorphism of graphs.
With any graph $G$, we associate a signed bipartite graph $S(G)$ defined as follows: 
for each edge $uv$ of $G$, we first add a parallel edge, and then 
subdivide both edges in order to form a 4-cycle
(if $G$ has $n$ vertices and $m$ edges, $S(G)$ has thus $n+2m$ vertices and $4m$ edges). 
Finally, for each such 4-cycle, we assign one negative and three positive signs to its edges.  
That is, we replace each edge $uv$ of $G$ by a 4-cycle with one negative edge in which $u$ and $v$ are not adjacent. 
The following is proved in~\cite{NRS15}.

\begin{theorem}\label{thm:graph--signedbipartitegraph}
	Given graphs $G$ and $H$, there is a homomorphism of $G$ to $H$ if and only if there is a homomorphism of $S(G)$ to $S(H)$.
\end{theorem}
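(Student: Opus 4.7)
The forward direction is the easy half. Given $f:G\to H$, my plan is to define $\tilde f: S(G)\to S(H)$ that sends each $v\in V(G)$ to $f(v)$, sends the two subdivision vertices on the 4-cycle for $e=uv$ to the two subdivision vertices on the 4-cycle for $f(e)=f(u)f(v)$ in $S(H)$, and maps edges accordingly. If the signatures of $S(G)$ and $S(H)$ are fixed by a uniform convention (e.g.\ the edge $ux_e$ is the negative edge of every 4-cycle), then $\tilde f$ is an edge-sign-preserving homomorphism, and Theorem~\ref{thm:HomSecondDefinition} immediately upgrades it to a signed-graph homomorphism.

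For the reverse direction, I assume $\phi:S(G)\to S(H)$ is a signed-graph homomorphism and must build $f:G\to H$. I may handle the connected components of $G$ separately, so assume $G$ is connected. Since $S(G)$ and $S(H)$ are bipartite with ``original'' parts $V(G)$ and $V(H)$, walk-parity considerations applied to the graph homomorphism underlying $\phi$ leave only two possibilities for a connected component: $\phi$ sends $V(G)$ into $V(H)$ (Case A), or $\phi$ sends $V(G)$ into the set of subdivision vertices of $S(H)$ (Case B). The key lever in both cases is that every 4-cycle $C_e=ux_evy_eu$ of $S(G)$ is negative, so its image $\phi(C_e)$ must be a negative closed walk of length $4$ in $S(H)$.

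In Case A, I will check that for every edge $e=uv$ of $G$ one cannot have $\phi(u)=\phi(v)$: otherwise $\phi(C_e)$ would traverse each of its two edges twice and be positive, contradicting the negativity of $C_e$. Since $\phi(x_e)$ is a subdivision vertex adjacent to both $\phi(u)$ and $\phi(v)$ in $S(H)$, this forces $\phi(u)\phi(v)$ to be the edge of $H$ whose 4-cycle contains $\phi(x_e)$, so $f:=\phi|_{V(G)}$ is the desired homomorphism $G\to H$. Case B is the subtler situation and is where I expect the main obstacle. Here $\phi(u)$ and $\phi(v)$ are subdivisions of some edges $e_u,e_v\in E(H)$; a parallel sign analysis of $\phi(C_e)$ rules out $e_u\neq e_v$ (which would collapse the image walk onto just two edges, giving sign $+$) and also $\phi(u)=\phi(v)$, leaving $\{\phi(u),\phi(v)\}=\{x_{e_u},y_{e_u}\}$ with $e_u=e_v$. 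Propagating through the connectivity of $G$, this common edge $e_0=ab$ is constant on $V(G)$, and $\phi|_{V(G)}$ is a proper $2$-coloring of $G$ by $\{x_{e_0},y_{e_0}\}$ (so $G$ is necessarily bipartite in this case); I will then define $f(u)=a$ or $b$ according to which color $\phi(u)$ takes, yielding a graph homomorphism $G\to H$. In both cases the sign hypothesis is exactly what rules out the degenerate images of each negative 4-cycle, so that an honest homomorphism can be read off of $\phi$.
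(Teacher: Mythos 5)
The paper itself gives no proof of this theorem --- it defers to \cite{NRS15} --- so I can only assess your argument on its own merits. Your overall architecture (forward direction via an edge-sign-preserving lift plus Theorem~\ref{thm:HomSecondDefinition}; reverse direction via the bipartition of $S(G)$ and $S(H)$ into original and subdivision vertices, using negativity of the image of each $4$-cycle to exclude degenerate images) is the right one, and your Case~A is correct. Two points need repair, one cosmetic and one substantive. The cosmetic one: a ``uniform convention'' such as ``$ux_e$ is negative'' presupposes a choice of first endpoint for each edge, and $f$ need not respect these choices, so $\tilde f$ as defined is not literally edge-sign-preserving. This is harmless: switching the degree-$2$ vertex $x_e$ moves the unique negative edge between $ux_e$ and $vx_e$ without touching any other $4$-cycle, and combined with the freedom to send $x_e$ to either subdivision vertex of $f(e)$ this always aligns the signs; Theorem~\ref{thm:HomSecondDefinition} then applies as you intend.

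The substantive gap is in Case~B, in the step ``a parallel sign analysis rules out $e_u\neq e_v$.'' The paper's graphs may have parallel edges, and if $e_u$ and $e_v$ are distinct parallel edges of $H$ with the same endpoints $a,b$, the image of $C_e$ is the walk $z_u\, a\, z_v\, b\, z_u$, which uses four \emph{distinct} edges taken from two different $4$-cycles of $S(H)$; it does not collapse onto two edges, and it is genuinely negative whenever exactly one of $z_u,z_v$ is incident to the negative edge of its own $4$-cycle. So this configuration cannot be excluded, and your subsequent definition of $f$ via the two colors $\{x_{e_0},y_{e_0}\}$ of a single edge $e_0$ does not apply. The fix is short: the sign analysis still forces $e_u$ and $e_v$ to share \emph{both} endpoints (sharing exactly one endpoint $c$ forces $\phi(x_e)=\phi(y_e)=c$ and a positive image walk), so by connectivity all of $V(G)$ lands on subdivision vertices lying over one fixed pair $\{a,b\}$; and the image walk $z_u\,c\,z_v\,d\,z_u$ is negative precisely when exactly one of $z_u,z_v$ is incident to the negative edge of its $4$-cycle. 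Hence the map sending $u$ to $a$ or $b$ according to whether $\phi(u)$ is incident to the negative edge of its $4$-cycle is a proper $2$-coloring of $G$, and composing with the edge $ab$ of $H$ gives the required homomorphism. If you intend the theorem only for simple $H$, your original argument for Case~B is fine, but you should then say so explicitly, since the paper's conventions allow multigraphs.
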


It is thus of special interest to study the homomorphism relation on the subclass of signed bipartite graphs.  Indeed, as is discussed in Section~\ref{sec:Future Work}, the study of homomorphisms of signed graphs was originated in order to address a possible extension of the Four-Color Theorem that is partly on the family of signed bipartite graphs and partly on that of antibalanced signed graphs.

\subsection{Core}

A common notion in the theory of homomorphisms is the notion of \emph{core}, which is defined analogously for each of the structural objects. 
A core is a graph (or a signed graph; or a 2-edge-colored graph---see Section \ref{Sec:Connection}) 
which does not admit any homomorphism to one of its proper subgraphs.
The \emph{core of a graph $G$} is then the smallest subgraph of $G$ (with respect to subgraph inclusion)
to which $G$ admits a homomorphism. It is clear, in each case, that the core of a graph is a core,
and it is not difficult to show that the core of a graph is unique up to isomorphism.
\subsection{Isomorphism and transitivity}

Any adaptation of a notion of homomorphism leads to a corresponding notion of isomorphism. 
An \emph{isomorphism}, more precisely a \emph{switching isomorphism}, from $(G,\sigma)$ to $(H,\pi)$ is a homomorphism $\phi$ that has an inverse, i.e., a homomorphism $\theta: (H,\pi) \to (G,\sigma)$ such that the compositions $\phi\circ\theta$ and $\theta\circ\phi$ are the identity self-mappings: $\phi\circ\theta = \id_{(H,\pi)}$ and $\theta\circ\phi = \id_{(G,\sigma)}.$  Equivalently, $\phi$ is a graph isomorphism that preserve the signs of closed walks; it follows easily that $\theta$ is also such a graph isomorphism.  We write $\phi\inv$ for $\theta$ and call it the \emph{inverse isomorphism} to $\phi$.
  
We say that $(G,\sigma)$ is \emph{isomorphic} to $(H,\pi)$ if there is an isomorphism of $(G,\sigma)$ to $(H,\pi)$

An \emph{automorphism} of a signed graph $(G,\sigma)$ is an isomorphism of $(G,\sigma)$ to itself. The set of all automorphisms of $(G,\sigma)$ forms the \emph{automorphism group} of $(G,\sigma)$.

We say, as with unsigned graphs, that a signed graph $(G,\sigma)$ is \emph{vertex transitive} if for any two vertices $x$ and $y$ of $G$,
there exists a automorphism of $(G,\sigma)$ which maps $x$ to $y$.  That is, the automorphism group is transitive on $V$.
We define \emph{edge transitivity} similarly.

\subsection{Girth and no-homomorphism lemmas}\label{sec:Question of bounding sparse graphs}

That the parity of a closed walk must be preserved by a homomorphism is the key fact that makes most graph homomorphism problems NP-hard. However, it also leads to a no-homomorphism lemma based on the easily computable parameter of odd girth (the least length of an odd cycle): the image of every odd cycle must contain an odd cycle which, therefore, is of smaller or equal length. Thus if $G\to H$, then the shortest odd cycle of $G$ is at least as big as the shortest odd cycle of $H$. 

With homomorphisms of signed graphs asking to preserve signs of closed walks, we have four types of  essentially different closed walk to consider: positive even closed walks, positive odd closed walks, negative even closed walks and negative odd closed walks. Given a homomorphism of $(G,\sigma)$ to $(H, \pi)$, images of closed walks of a given type in $(G,\sigma)$ must be of the same type in $(H, \pi)$.

We will say that a positive even walk is of type~$00$, a negative even walk is of type~$10$,
a positive odd walk is of type~$01$ and a negative odd walk is of type~$11$.
This notation is convenient in the following sense: given closed walks $W_1$  of type $ij$ and $W_2$ of type $i'j'$, both starting at a same vertex, the type of closed walk $W_1W_2$ is the binary sum of the two types, that is $ij+i'j'$ where addition is done in $\mathbb{Z}_2^2$.

We may now introduce four separate notions of girth based on walks, one for each of the four types of closed walk: 

\begin{definition}[Walk-girths of signed graphs]\label{def:girth}
Given a signed graph $(G, \sigma)$, for each walk type $ij$ we define the length of a shortest nontrivial closed walk of that type in $(G, \sigma)$ to be the \emph{$ij$-walk-girth} of $(G, \sigma)$ and denote it by $g_{ij}(G,\sigma)$. When there is no such walk, we write $g_{ij}(G,\sigma)=\infty$. 

For simplicity, we use the term \emph{girth of type $ij$} to denote the walk-girth of type $ij$. 
\end{definition}

As the examples of Figure~\ref{fig:girth-by-closed walks} show, a walk-girth of type $ij$ is not always obtained by a cycle; thus we use term walk-girth. 

\begin{figure}[hbt]
	\center
	\begin{tikzpicture}[scale=0.6]
	\draw (-2,0)  node[circle,fill=black,inner sep=1mm,minimum size=3mm] (v1) {};
	\draw (-.5,3)  node[circle,fill=black,inner sep=1mm,minimum size=3mm] (v2) {};
	\draw (-.5,-3)  node[circle,fill=black,inner sep=1mm,minimum size=3mm] (v3) {};
	\draw (-3.5,3)  node[circle,fill=black,inner sep=1mm,minimum size=3mm] (v4) {};
	\draw (-3.5,-3)  node[circle,fill=black,inner sep=1mm,minimum size=3mm] (v5) {};

	\draw [line width=0.6mm, blue ] (v1) -- (v2);
	\draw [line width=0.6mm, blue ](v1) -- (v3);
	\draw [line width=0.6mm, blue ](v1) -- (v4);
	\draw [line width=0.6mm, blue ](v1) -- (v5);
	\draw [line width=0.6mm, blue ](v4) -- (v2);
	\draw[line width= 0.6mm,dashed, red ] (v3) -- (v5);
	
	\draw (5,0)  node[circle,fill=black,inner sep=1mm,minimum size=3mm] (u1) {};
	\draw (6.5,3)  node[circle,fill=black,inner sep=1mm,minimum size=3mm] (u2) {};
	\draw (6.5,-2.5)  node[circle,fill=black,inner sep=1mm,minimum size=3mm] (u3) {};
	\draw (3.5,3)  node[circle,fill=black,inner sep=1mm,minimum size=3mm] (u4) {};
	\draw (3.5,-2.5)  node[circle,fill=black,inner sep=1mm,minimum size=3mm] (u5) {};
	\draw (5,-5)  node[circle,fill=black,inner sep=1mm,minimum size=3mm] (u6) {};
	
	\draw [line width=0.6mm, blue ](u1) -- (u2);
	\draw [line width=0.6mm, blue ](u1) -- (u3);
	\draw [line width=0.6mm, blue ](u1) -- (u4);
	\draw [line width=0.6mm, blue ](u1) -- (u5);
	\draw [line width=0.6mm, blue ](u4) -- (u2);
	\draw [line width=0.6mm, blue ](u5) -- (u6);
	\draw[line width= 0.6mm,dashed, red] (u3) -- (u6);
	
	\draw (12,0)  node[circle,fill=black,inner sep=1mm,minimum size=3mm] (w1) {};
	\draw (13.5,3)  node[circle,fill=black,inner sep=1mm,minimum size=3mm] (w2) {};
	\draw (13.5,-2.5)  node[circle,fill=black,inner sep=1mm,minimum size=3mm] (w3) {};
	\draw (10.5,3)  node[circle,fill=black,inner sep=1mm,minimum size=3mm] (w4) {};
	\draw (10.5,-2.5)  node[circle,fill=black,inner sep=1mm,minimum size=3mm] (w5) {};
	\draw (12,-5)  node[circle,fill=black,inner sep=1mm,minimum size=3mm] (w6) {};
	
	\draw [line width=0.6mm, blue ](w1) -- (w2);
	\draw [line width=0.6mm, blue ](w1) -- (w3);
	\draw [line width=0.6mm, blue ](w1) -- (w4);
	\draw [line width=0.6mm, blue ](w1) -- (w5);
	\draw[line width= 0.6mm,dashed, red] (w4) -- (w2);
	\draw[line width=0.6mm, blue ](w5) -- (w6);
	\draw[line width= 0.6mm,dashed, red] (w3) -- (w6);
	\end{tikzpicture}

	\caption{Signed graphs where $g_{10}$, $g_{11}$, $g_{01}$ are realized by a closed walk but not a cycle.} 
	\label{fig:girth-by-closed walks}
\end{figure}
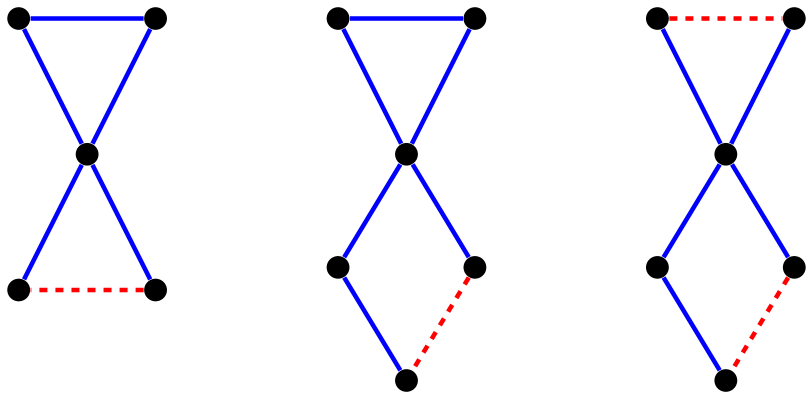 

Observe that $g_{00}(G,\sigma)$ is either $\infty$ (when $G$ has no edge) or it is 2. 
While $g_{00}$ is thus not of much interest, the other three values are of high interest in the study of homomorphisms of signed graphs. Indeed we have the following no-homomorphism lemma from the definition.
 
 \begin{lemma}\label{lem:NoHomGirth}
 	If a signed graph $(G,\sigma)$ admits a homomorphism to a signed graph $(H,\pi)$, then $$g_{ij}(G,\sigma)\geq g_{ij}(H,\pi)$$ for each $ij\in \mathbb{Z}_2^2$.
 \end{lemma}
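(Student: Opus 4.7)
The plan is to argue directly from the definitions: a homomorphism $\phi \colon (G,\sigma) \to (H,\pi)$ sends every nontrivial closed walk in $(G,\sigma)$ to a nontrivial closed walk of the same length and the same type in $(H,\pi)$. Hence any upper bound on the length of a shortest closed walk of type $ij$ in the source also applies in the target, which is exactly the stated inequality (with the convention $\infty \geq g_{ij}(H,\pi)$).

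First I would dispose of the trivial case $g_{ij}(G,\sigma) = \infty$, where the claim holds vacuously. Otherwise set $\ell := g_{ij}(G,\sigma) < \infty$ and fix a nontrivial closed walk $W = v_0 e_1 v_1 e_2 \cdots e_\ell v_\ell$ in $(G,\sigma)$ of type $ij$ realizing this minimum length.

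Next I would consider the sequence $\phi(W) := \phi(v_0) \phi(e_1) \phi(v_1) \cdots \phi(e_\ell) \phi(v_\ell)$. Because $\phi$ is a graph homomorphism (preserving incidences between vertices and edges), $\phi(W)$ is a walk in $H$, and since $\phi(v_0) = \phi(v_\ell)$ it is closed. Since each $e_i$ is sent to a single edge of $H$, the length of $\phi(W)$ is exactly $\ell \geq 1$; in particular $\phi(W)$ is nontrivial and has the same parity as $W$. By the definition of a signed-graph homomorphism (preservation of the signs of closed walks), $\pi(\phi(W)) = \sigma(W)$, so $\phi(W)$ also has the same sign as $W$. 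Combining these two observations, $\phi(W)$ is a nontrivial closed walk in $(H,\pi)$ of type $ij$ and length $\ell$, which forces
\[
    g_{ij}(H,\pi) \;\leq\; \ell \;=\; g_{ij}(G,\sigma).
\]

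I do not expect any real obstacle here. The only subtle point is to check that a signed-graph homomorphism preserves length (because each edge maps to exactly one edge), and therefore also preserves nontriviality and parity; sign preservation is immediate from the definition. The argument is uniform in $ij \in \mathbb{Z}_2^2$, so a single walk-chasing calculation settles all four types at once.
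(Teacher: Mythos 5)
Your proof is correct and follows exactly the route the paper intends: the paper states this lemma without proof, noting only that images of closed walks of a given type must be of the same type, and your walk-chasing argument (length, parity, and sign are all preserved, so the image of a shortest type-$ij$ closed walk witnesses $g_{ij}(H,\pi)\leq g_{ij}(G,\sigma)$) is precisely the intended justification.
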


To emphasize the importance of Lemma~\ref{lem:NoHomGirth}, we show that given a signed graph $(G, \sigma)$ one can compute $g_{ij}(G,\sigma)$ for all four choices of $ij$ in polynomial time.

\begin{proposition}\label{prop:Computing g_ij}
	Given a signed graph $(G, \sigma)$ of order $n$ we can compute $g_{ij}(G,\sigma)$ in time $O(n^4)$.
\end{proposition}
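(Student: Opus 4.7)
The plan is to reduce all four walk-girth computations to breadth-first search in an auxiliary graph that simultaneously tracks the sign and the parity of a walk traced so far. To this end I would build a $\mathbb{Z}_2^2$-covering graph $\tilde G$ on vertex set $V(G)\times \mathbb{Z}_2^2$: for each edge $e=uv$ of $G$ with sign $\sigma(e)$, set $s(e)=0$ if $\sigma(e)=+$ and $s(e)=1$ if $\sigma(e)=-$, and add four lifted edges of $\tilde G$ joining $(u,(i,j))$ to $(v,(i+s(e),j+1))$ over the four choices of $(i,j)\in \mathbb{Z}_2^2$. A routine induction on length then shows that walks in $(G,\sigma)$ starting at $v_0$ of type $ij$ (sign encoded by $i$, parity by $j$) are in length-preserving bijection with walks in $\tilde G$ from $(v_0,(0,0))$ to some vertex $(v_k,(i,j))$, and in particular that for $(i,j)\neq(0,0)$ the length of the shortest nontrivial closed walk of type $ij$ through $v$ coincides with the BFS distance $d_{\tilde G}\bigl((v,(0,0)),(v,(i,j))\bigr)$.

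The algorithm is then to run BFS in $\tilde G$ from each source $(v,(0,0))$ with $v\in V(G)$, read off the three distances $d_{\tilde G}\bigl((v,(0,0)),(v,(i,j))\bigr)$ for $(i,j)\ne(0,0)$, and minimise over $v$ to obtain $g_{01}(G,\sigma)$, $g_{10}(G,\sigma)$, and $g_{11}(G,\sigma)$. The remaining case $(i,j)=(0,0)$ must be handled separately because the trivial walk at $v$ already gives BFS distance $0$; but traversing any incident edge and returning produces a closed walk of sign $+$ and length $2$, so $g_{00}(G,\sigma)=2$ whenever $G$ has at least one edge and $g_{00}(G,\sigma)=\infty$ otherwise. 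As $\tilde G$ has $4n$ vertices and, after collapsing same-sign parallel edges (which clearly does not affect any walk-girth), $O(n^2)$ edges, each BFS runs in $O(n^2)$ time; performing $n$ of them gives total running time $O(n^3)$, comfortably within the claimed $O(n^4)$ bound.

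The main conceptual ingredient is the walk-lifting bijection between $(G,\sigma)$ and its $\mathbb{Z}_2^2$-cover, combined with the standard fact that BFS in any graph returns the length of the shortest walk and not merely of the shortest path, which is why shortest closed walks through $v$ can be read off from a single BFS rooted at $(v,(0,0))$. I do not expect any genuine obstacle; the only pitfall is the need to exclude the length-$0$ trivial walk in the $(0,0)$ case, which is handled by the separate ad hoc argument above.
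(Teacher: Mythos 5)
Your proof is correct, and it takes a route that differs in its mechanics from the paper's, even though both are breadth-first searches rooted at each vertex. The paper tracks only the sign of a walk explicitly, building sets $N_k^{+}(v)$ and $N_k^{-}(v)$ layer by layer (parity being implicit in the layer index $k$), and then detects a shortest closed walk of each type by a case analysis on how two branches of the search meet: at a common vertex of $N_k^{+}(v)\cap N_k^{-}(v)$ for type $10$, or across an edge of the appropriate sign joining two layer-$k$ vertices for types $01$ and $11$. You instead pass to the $\mathbb{Z}_2^2$-cover that records sign and parity simultaneously, so that a shortest nontrivial closed walk of type $ij$ through $v$ becomes literally a shortest path from $(v,(0,0))$ to $(v,(i,j))$, read off from a single BFS with no case distinctions; the walk-lifting bijection and the exclusion of the trivial walk in the $00$ case are exactly the points that need (and receive) attention. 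Your approach buys two things: it treats the three nontrivial types uniformly, avoiding the gluing argument and the vertex-versus-edge meeting cases, and it transparently yields a running time of $O(n\cdot(n+m))=O(n^3)$ for simple signed graphs, which is precisely the improvement the paper only conjectures after its $O(n^4)$ analysis. The paper's formulation, on the other hand, stays closer to the classical Hansen/Harary--Kabell style of sign-labelled search and requires no auxiliary graph to be built.
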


\begin{proof}
	Our method is that, for each vertex $v$ of $G$, we compute in time $O(n^3)$ the length of a shortest closed walk of type $ij$ which starts at $v$. Taking the minimum of all such values then gives $g_{ij} (G,\sigma)$. (The length of the input is $O(n^2)$ because we assume no parallel edges in $(G,\sigma)$ have the same sign.)

	We use the following notation: $N_i(v)$ is the set of vertices that are at distance $i$ from $v$ in $G$.
Moreover, for $\epsilon = +, -$, we define $N_i^\epsilon(v)$ to be the set of vertices at distance $i$ from $v$ that can be reached from $v$ by a path $P$ of length $i$ and sign $\epsilon$. Observe that these sets can be built inductively: given $N_i^+(v)$ and $N_i^-(v)$ for $i = k-1$, vertices not already reached which are adjacent to some vertex in $N_i^+(v)$ with an edge of sign $\epsilon$ or to some vertex in $N_i^-(v)$ with an edge of sign $-\epsilon$ form $N^\epsilon_{k}(v)$.

The time to construct the sets $N_k^\epsilon(v)$ for each vertex $v$ is $O(n^3)$.  Each set satisfies $|N_k^\epsilon(v)| = O(n)$.  Each vertex in $N_{k-1}^\epsilon(v)$ scans $O(n)$ vertices for possible inclusion in $N_k^\epsilon(v)$.  This procedure is repeated $O(n)$ times for increasing values of $k$.  The time needed for this is $O(n^3)$ for each vertex $v$; thus, $O(n^4)$ in total.

	The shortest length of a closed walk of type $ij$ starting at $v$, denoted by $g_{ij}(G,v,\sigma)$, is now computed as follows.

	\begin{enumerate}[$ij=00$]
	\item [$ij=00$:] $g_{00}(G, v,\sigma)=2$ if $N_1(v) \neq \emptyset$ and $g_{00}(G,v, \sigma)=\infty$ otherwise.
		
	\item [$ij=01$:] Consider the first $k$ where one of the following happens:  $N^+_{k}(v)$ or $N^-_{k}(v)$ induces a positive edge, or a negative edge connects a vertex from $N^+_{k}(v)$ to a vertex from $N^-_{k}(v)$. Then  $g_{01}(G,v,\sigma)=2k+1$.
	    
	\item [$ij=10$:] Consider the first $k$ where $N^+_{k}(v) \cap N^-_k(v)\neq \emptyset$.  Then  $g_{10}(G,v, \sigma)=2k$.
	    
	\item [$ij=11$:] Consider the first $k$ where one of the following happens:  $N^+_{k}(v)$ or $N^-_{k}(v)$ induces a negative edge, or a positive edge connects a vertex from $N^+_{k}(v)$ to a vertex from $N^-_{k}(v)$. Then  $g_{11}(G,v,\sigma)=2k+1$.

	\end{enumerate}
	
This computation takes time $O(n^3)$ because there are $O(n)$ possible values of $k$ and the set intersections and edge examinations take time $O(n^2)$.

	Now the $n$ vertices are examined to find $g_{ij}(G,\sigma) = \min_v g_{ij}(G,v,\sigma)$.  This takes time $O(n)$.

	The longest step is the first, so the whole procedure takes time $O(n^4)$.
\end{proof}

The bound of $O(n^4)$ is for simplicity of the proof; one can certainly do better. We conjecture that this can be done in time $O(n^3)$. 

Observe that if $g_{00}(G,\sigma)=\infty$ then $G$ has no edge and, therefore, $g_{ij}(G,\sigma)=\infty$ for all other choices of $ij$. In the next lemma we show something similar but weaker for other choices of $ij$: that if $G$ is connected and $g_{ij}(G,\sigma)=\infty$, then at least for one other choice $i'j'$ we have $g_{i'j'}(G,\sigma)=\infty$. This will lead to the study of three special subclasses of signed graph, each of which is of special importance in the study of homomorphisms. These classes will be discussed in the next subsection.

\begin{lemma}\label{lem:2infty}
Let $(G,\sigma)$ be a connected signed graph.
If $(G,\sigma)$ contains two closed walks $W_1$ and $W_2$ of types $i_1j_1\neq 00$ and $i_2j_2\neq 00$, respectively,
with $i_1j_1\neq i_2j_2$,
then it contains a closed walk $W_3$ of the third nonzero type.

\end{lemma}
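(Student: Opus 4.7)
The plan is to exploit the fact that closed walk types live in the group $\mathbb{Z}_2^2$, where the type of a concatenation of two closed walks sharing a base point is the sum of their types, as already noted right before Definition \ref{def:girth}. Since $\{00, 01, 10, 11\}$ forms this group, the sum of any two distinct nonzero types is exactly the third nonzero type. So the only real obstacle is that $W_1$ and $W_2$ need not share a base point; connectivity of $G$ will let us fix this.

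First I would set up notation: let $u$ be the base vertex of $W_1$ and let $v$ be the base vertex of $W_2$. Since $G$ is connected, there exists a $uv$-walk $P$ in $G$ (a path will do). Then I would form the closed walk $W_2' := P \cdot W_2 \cdot P^{-1}$, which is a closed walk based at $u$.

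The next step is to verify that $W_2'$ has the same type as $W_2$. For the parity, $|W_2'| = 2|P| + |W_2|$, which has the same parity as $|W_2|$. For the sign,
\[
\sigma(W_2') = \sigma(P)\,\sigma(W_2)\,\sigma(P^{-1}) = \sigma(P)^2\, \sigma(W_2) = \sigma(W_2),
\]
since each edge of $P$ appears exactly once in $P$ and once in $P^{-1}$. Hence $W_2'$ is of type $i_2 j_2$.

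Finally, I would define $W_3 := W_1 \cdot W_2'$, which is a closed walk based at $u$ (concatenation is well defined because both components start and end at $u$). By the additivity of types under concatenation at a shared base point, $W_3$ is of type $i_1 j_1 + i_2 j_2$ in $\mathbb{Z}_2^2$. Since $i_1j_1$ and $i_2j_2$ are two distinct nonzero elements of $\mathbb{Z}_2^2$, their sum is the unique third nonzero element, so $W_3$ has exactly the missing nonzero type, as required. There is no genuine obstacle here beyond making the ``transport $W_2$ to $u$ via $P$'' move, so the proof is essentially a one-paragraph verification.
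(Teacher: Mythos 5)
Your proof is correct and is essentially the paper's own argument: the paper likewise uses connectivity to pick a path $P$ from a vertex of $W_1$ to a vertex of $W_2$, forms $W_3 = W_1 P W_2 P^{-1}$, observes that $P$ being traversed twice affects neither parity nor sign, and concludes via the group structure of $\mathbb{Z}_2^2$. Your splitting of the construction into ``conjugate $W_2$ to the base point of $W_1$, then concatenate'' is only a cosmetic repackaging of the same walk.
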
 

\begin{proof}
Since $G$ is connected, there is a (shortest) path $P$ connecting a vertex $u$ of $W_1$ to a vertex $v$ of $W_2$. 
Let $W_3=W_1PW_2P\inv$, as a closed walk starting at $u$. 
Since $P$ is traversed twice, it affects neither parity nor sign of $W_3$. Therefore, $W_3$ is of type $(i_1+j_1)(i_2+j_2)$ (where addition is modulo $2$). Since $i_1j_1$ and $i_2j_2$ are nonzero elements of $\mathbb{Z}_2^2$, their sum is the third nonzero element of this group. 
\end{proof}

We say that $g_{ij}(G,\sigma)$ is \emph{realized} by a walk $W$ if $W$ has the type $ij$ and has minimum length with that type.  It appears that $g_{ij}(G,\sigma)$, if finite, may be realized by cycles and non-cyclic closed walks, or only by cycles, or only by closed walks that are not cycles.  Appearances are misleading, as we see in the following result, where for simplicity we write $g_{ij} := g_{ij}(G,\sigma)$ and $m := \max(g_{01},g_{10},g_{11})$.  We do not exclude the possibility that $m = \infty$.

\begin{proposition}\label{propo:4typesfinite}
Let $G$ be a graph that is not bipartite, let $\sigma$ be a signature on $G$.  Suppose that, for some $ij \neq 00$, $g_{ij} < m$ or $g_{ij} = g_{i^*j^*} = m < \infty$ for some other $i^*j^* \neq 00, ij$.  Then $g_{ij}$ is realized only by cycles.

In particular, if all three types of girth aside from $g_{00}$ are finite, then at least two of the three values  $g_{10}$, $g_{01}$ and $g_{11}$ are realized only by cycles of $G$.
\end{proposition}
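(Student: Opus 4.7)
The plan is to argue by contradiction: suppose $W$ realizes $g_{ij}$ but is not a cycle, and extract from $W$ a splitting whose length budget is incompatible with the stated hypotheses.

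First I would show $W$ has length at least $3$. For types $ij\neq 00$, a closed walk of length $1$ is a loop (hence a cycle) and a closed walk of length $2$ with nonzero type must consist of two parallel edges of opposite sign (again a cycle, of length $2$ by the paper's convention). Since $W$ is a non-cycle of length at least $3$, it has some repetition beyond the forced coincidence $v_0=v_k$; a brief case check shows that any repeated edge forces a repeated vertex as well, so there exist indices $0\le i<j\le k$ with $(i,j)\neq(0,k)$ and $v_i=v_j$. Splitting at this repetition gives $W=W_0W_1W_2$, where $W_1$ is a closed walk at $v_i$ of length $j-i$ and $W_0W_2$ is a closed walk at $v_0$ of length $k-(j-i)$; both pieces are nontrivial and strictly shorter than $W$. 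Since sign is multiplicative and length additive, the types in $\mathbb{Z}_2^2$ satisfy
\[
\mathrm{type}(W_0W_2)+\mathrm{type}(W_1)=ij.
\]

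If either summand equals $00$, the other is a closed walk of type $ij$ shorter than $W$, contradicting the minimality of $W$. The remaining case, and the main obstacle, is that both summands are nonzero; then $\{\mathrm{type}(W_0W_2),\mathrm{type}(W_1)\}$ is exactly the pair of nonzero types distinct from $ij$, which I will denote $i^*j^*$ and $i^{**}j^{**}$. The lengths of the two pieces upper-bound the corresponding girths, yielding the key inequality
\[
g_{i^*j^*}+g_{i^{**}j^{**}}\le \mathrm{length}(W_1)+\mathrm{length}(W_0W_2)=g_{ij},
\]
which in particular forces both $g_{i^*j^*}$ and $g_{i^{**}j^{**}}$ to be finite. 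Each hypothesis of the proposition now kills this inequality. Under $g_{ij}<m$, the value $m$ must be attained by $g_{i^*j^*}$ or $g_{i^{**}j^{**}}$ (say the former), giving $g_{i^{**}j^{**}}\le g_{ij}-m<0$, impossible. Under $g_{ij}=g_{i^*j^*}=m<\infty$, the inequality becomes $m+g_{i^{**}j^{**}}\le m$, again absurd since every girth is at least $1$. In either case the splitting cannot exist, so $W$ must be a cycle.

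For the ``in particular'' clause I would order the three finite values $g_{10}$, $g_{01}$, $g_{11}$ as $a\le b\le c=m$ and examine the four orderings. In each of $a<b<c$, $a=b<c$, $a<b=c$, and $a=b=c$, at least two of the three girths are either strictly less than $m$ (falling under the first hypothesis) or tie at the value $m<\infty$ with another girth (falling under the second), and are therefore realized only by cycles. The non-bipartite assumption is used only to ensure this clause is not vacuous, since in a bipartite signed graph the odd-length girths $g_{01}$ and $g_{11}$ are automatically infinite.
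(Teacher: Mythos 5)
Your proof is correct and follows essentially the same strategy as the paper's: decompose a minimal non-cycle closed walk realizing $g_{ij}$ into two shorter nontrivial closed walks whose types must be the two other nonzero elements of $\mathbb{Z}_2^2$, and conclude that both remaining girths are strictly smaller than $g_{ij}$, contradicting either hypothesis. The only (harmless) variation is that you split at a repeated vertex rather than extracting a cycle subwalk, which yields the slightly stronger additive bound $g_{i^*j^*}+g_{i^{**}j^{**}}\le g_{ij}$ in place of the paper's two strict inequalities.
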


\begin{proof}
A \emph{subwalk} of a walk $W$ is any nonempty consecutive sequence of edges in $W$.  A closed walk that realizes some $g_{ij}$ cannot contain a closed subwalk of type $00$, or it would not have minimal length.

Now consider $ij$ as assumed in the statement.  Since $g_{ij} < \infty$, there is a closed walk $W$ that realizes $g_{ij}$. If this walk is not a cycle, by minimality it contains a subwalk $C$ that is a cycle of type $i_Cj_C \neq 00, ij$.  If we cut $C$ out of $W$ we are left with a closed walk $W'$, shorter than $W$ and of type $ij+i_Cj_C = i'j' \neq 00, ij, i_Cj_C$.  Both $C$ and $W'$ are shorter than $W$ so $g_{ij} = m < \infty$ and both $g_{i_Cj_C}$ and $g_{i'j'}$ are less than $m$.  We have shown that either all closed walks that realize $g_{ij}$ are cycles, or $\infty > g_{ij} = m > g_{i_Cj_C}, g_{i'j'}$.  This proves that any $g_{ij} < m$ can only be realized by cycles.

Suppose now that $g_{ij} = m < \infty$.  If $g_{ij}$ is realized by a closed walk $W$ that is not a cycle, the preceding argument shows that $m > g_{i_Cj_C}, g_{i'j'}$ so both $g_{i_Cj_C}$ and $g_{i'j'}$ are realized only by cycles.  Therefore, if any two of $g_{01}, g_{10}, g_{11}$ equal $m$, then all three are realized only by cycles.

The last part of the proposition is now obvious.
\end{proof}

As examples in Figure~\ref{fig:girth-by-closed walks} show, for each choice of the $ij$, $ij\neq 00$, there exist a signed graph $(G,\sigma)$ where the values $g_{ij}(G,\sigma)$, can only be realized only non-cyclic closed walks.

\subsection{Special classes}
\label{sec:special}

Lemma~\ref{lem:2infty} leads to the definition of three classes of signed graph which have proven to be of special importance in the homomorphism study of signed graphs. 
We refer to Section~\ref{sec:Future Work} for examples of important problems in these classes.

\begin{enumerate}
	\item If $g_{11}(G,\sigma)=g_{10}(G,\sigma)=\infty$, then $(G,\sigma)$ has no negative cycle, thus it is balanced. By Lemma~\ref{lem:Zaslavsky}, after a suitable switching, all edges are positive, i.e., $(G,\sigma)$ is switching equivalent to $(G,+)$. This class of signed graphs will be denoted by $\mathcal{G}_{01}$.
	
	\item If $g_{10}(G,\sigma)=g_{01}(G,\sigma)=\infty$, then $(G,\sigma)$ is antibalanced. By Lemma~\ref{lem:Zaslavsky}, after a suitable switching, all edges are negative, i.e., $(G,\sigma)$ is switching equivalent to $(G,-)$. This class of signed graphs will be denoted by $\mathcal{G}_{11}$.
	
	\item If $g_{11}(G,\sigma)=g_{01}(G,\sigma)=\infty$, then $G$ has no cycle of odd length and, thus,
	$(G,\sigma)$ is a signed bipartite graph.  This class of signed graphs will be denoted by $\mathcal{G}_{10}$.  

\end{enumerate}
Thus $\mathcal{G}_{ij}$, $ij \in \set{01,10,11}$, is the class of signed graphs consisting of signed graphs in which every closed walk is either of type $00$ or $ij$. 

Given two signed graphs $(G, \sigma)$ and $(H, \pi)$ both in $\mathcal{G}_{01}$, to decide whether there is a homomorphism of $(G, \sigma)$ to $(H, \pi)$, first we switch each of the graphs so that all edges are positive; then, by Theorem~\ref{thm:HomSecondDefinition}, $(G, \sigma)$ maps to $(H, \pi)$ if and only if $G$ maps to $H$. The same conclusion applies when both graphs are in the class $\mathcal{G}_{11}$. 
While therefore the homomorphism problem in each of these two classes is about homomorphisms of underlying graphs, as signed graphs the two classes behave differently in connection with graph minors.

A \emph{minor} (more precisely, a \emph{link minor}) of a signed graph is a signed graph obtained by the following sequence of operations: 1. Delete vertices and edges. 2. Switch a set of vertices. 3. Contract a set of positive edges.
(A \emph{link} is a non-loop edge.  This definition of contraction, which can be found in \cite{Z97}, is a restriction of the definition of minors from \cite{Z82b}, which also allows contracting loops.  For a positive loop, contraction is the same as deletion.)

In the class $\mathcal{G}_{01}$, the signed graphs without a $(K_3, +)$-minor are the signed graphs with no cycle in the underlying graph (a forest in the usual sense), and thus all admit homomorphisms to $(K_2, +)$. Similarly those not containing $(K_4,+)$ are $K_4$-minor-free graphs together with a signature where all cycles are positive. Thus all such signed graphs map to $(K_3,+)$. 

On the other hand, in the class $\mathcal{G}_{11}$ signed graphs without a $(K_3, -)$-minor are the signed graphs with no odd cycle in the underlying graph. That is because $(K_3,-)$ can only be obtained as a minor from a negative cycle, which in the class $\mathcal{G}_{11}$ is the same as an odd cycle. Thus in this case we have the class of bipartite graphs with signature such that all cycles are positive, and therefore, each admits a homomorphism to $(K_2,-)$. A restatement of a result of Catlin from \cite{C79} (the proof of equivalence is left to the reader) is that a signed graph $(G, \sigma)$ in $\mathcal{G}_{11}$ which does not have a $(K_4, -)$-minor maps to $(K_3, -)$. 

Recalling that the homomorphism question in each of the classes $\mathcal{G}_{01}$ and $\mathcal{G}_{11}$ is (only) about the homomorphisms of the underlying graphs, we may restate previous claims as follows.
Given a graph $G$, if $(G,+)$ has no $(K_3,+)$-minor (respectively, no  $(K_4,+)$-minor), then $G$ maps to $K_2$ (respectively, $K_3$). Similarly, if for a given graph $G$ the signed graph $(G,-)$ has no $(K_3,-)$-minor (respectively, no  $(K_4,-)$-minor), then $G$ maps to $K_3$ (respectively, $K_4$). While the conclusions of the two cases are the same, the class of graphs for which the first statement applies is strictly included in the class of graphs for which the latter one applies.  More importantly, the class of graphs for which the first statement works is a sparse family of graphs whereas the class of graphs in the latter statement in particular contains all bipartite graphs, thus includes a dense family of graphs.  

This observation has been one of the main motivations behind the development of this theory of homomorphisms of signed graphs.  Some possible generalizations will be mentioned in Section~\ref{sec:Future Work}.

\subsection{Definitions of a chromatic number for signed graphs}

One of the most classic notions of graph theory is the notion of \emph{proper coloring} together with the associated parameter: \emph{chromatic number}. Recall that proper coloring of a simple graph is a coloring of vertices where adjacent vertices receive different colors and the chromatic number is the minimum number of colors required in such a coloring. A natural extension of proper coloring of graphs to signed graphs, using signed colors, was first given in \cite{Z82a} (see also \cite{Z82c} and \cite{Z84}). Motivated by this notion of proper coloring, various other definitions of chromatic number of signed graphs have been recently introduced, whose difference depends on the choice of the set of signed colors; see e.g.\ \cite{KS18} for circular coloring and \cite{FW16} for list coloring.

Here we present chromatic number as a natural optimization problem. To better express the idea, we begin with a reformulation of chromatic number of graphs which is better suited to the homomorphism viewpoint: the chromatic number of a simple graph $G$ is the smallest order of a homomorphic image of $G$ which is also simple (i.e., without loops). It is easy to verify that this definition gives the classic chromatic number. 
Observing the importance of the odd cycles in the value of the chromatic number, and that loops, which are odd cycles of length 1, are the crucial objects to be avoided in the target graph, we may refine the definition of chromatic number as follows. Given a triangle-free graph (or more generally one with odd girth $2k+1$) what is the smallest order of a triangle-free graph (or, respectively, a graph of odd girth $2k+1$) to which $G$ admits a homomorphism? 
For example, as an extension of the Four-Color Theorem, and using that theorem itself, it has been shown that every triangle-free planar graph admits a triangle free image of order at most 16 and that 16 is the best possible (see \cite{N13} and references therein, also Section~\ref{sec:Future Work}). This upper bound of 16 is reduced to 5 if we consider the subclass of planar graphs of odd girth at least 13 (see \cite{Zhu01}). 
Considering our no-homomorphism lemma, Lemma~\ref{lem:NoHomGirth}, we generalize this idea to signed graphs as follows.

\begin{definition}
		
	Given a triple  $L=(l_{01}, l_{10}, l_{11})$ where $l_{ij}$ is either a positive integer or infinity, and given a signed graph $(G, \sigma)$ satisfying $g_{ij}(G, \sigma)\geq l_{ij}$, we define the \emph{$L$-chromatic number} of $(G,\sigma)$, denoted by $\chi_L (G,\sigma)$, to be the  minimum number of vertices of a signed graph $(H,\pi)$ which satisfies $g_{ij}(H, \pi)\geq l_{ij}$ and $(G,\sigma)\to (H,\pi)$. 
	
	Furthermore, given a triple $K=(k_{01}, k_{10}, k_{11})$ satisfying $k_{ij}\geq l_{ij}$, we define the $(K,L)$-chromatic number of a class $\cS$ of signed graphs to be the maximum of the $L$-chromatic numbers of signed graphs $(G, \sigma)$ in $\cS$ satisfying $g_{ij}(G,\sigma)\geq k_{ij}$, and $\infty$ when there is no such maximum.
	
\end{definition}

With this terminology, if we take $L=(\infty, \infty, l)$, then any signed graph $(G, \sigma)$ satisfying $g_{ij}(G, \sigma)\geq l_{ij}$ is a signed graph in $\mathcal{G}_{11}$, i.e., an antibalanced signed graph, and thus it can be switched to $(G,-)$ (all edges negative). As mentioned in the previous section, in this class of signed graphs the question of whether $(G, \sigma)$ maps to $(H,\pi)$ is reduced to the question of whether the graph $G$ maps to $H$. Similarly for $L=(l, \infty, \infty)$ a signed graph satisfying $g_{ij}(G, \sigma)\geq l_{ij}$ is a signed graph in $\mathcal{G}_{01}$, i.e., a balanced signed graph, which switches to $(G,+)$, so homomorphism questions on these signed graphs are identical to homomorphism questions on graphs.

Let $\mathcal{P}$ be the class of signed planar graphs.  With our terminology, it is a restatement of the Four-Color Theorem to say that for $K=L=(\infty, \infty, 3)$ and for $K=L=(3, \infty, \infty)$ the $(K,L)$-chromatic number of $\mathcal{P}$ is 4. For $K=(\infty, \infty, 5)$ and $L=(\infty, \infty, 3)$ it is a restatement of the Gr\"otzsch theorem to say that the $(K,L)$-chromatic number of $\mathcal{P}$ is 3. The result of \cite{N13} can be also restated as: for $K=L=(\infty, \infty, 5)$ the $(K,L)$-chromatic number of $\mathcal{P}$ is $16$.  In general, determining the $(K,L)$-chromatic number of planar graphs is a question of high interest. Special cases will be mentioned in Section~\ref{sec:Future Work}.

The notion of proper coloring of signed graphs as defined in \cite{Z82a} corresponds to the $(K,L)$-chromatic number for $K=L=(3, 2, 1)$.  That is to say, given a signed graph where digons and negative loops are allowed but positive loops are not, we want to find a smallest homomorphic image without a positive loop. For more detail we refer to Section 2.4 of \cite{BFHN17}.

\section{Connections to 2-Edge-Colored Graphs}\label{Sec:Connection}

A homomorphism of a 2-edge-colored graph $G=(V(G), E_1(G), E_2(G))$ to a 2-edge-colored graph $H=(V(H), E_1(H), E_2(H))$ is a homomorphism of the underlying graph $G$ to the underlying graph $H$ which preserves colors of the edges.

Homomorphisms of signed graphs can be viewed as a special case of homomorphisms of 2-edge-colored graphs in a few ways; we discuss three such possibilities here.

\subsection{Signs as colors}

The easiest connection is by way of Theorem~\ref{thm:HomSecondDefinition}. A signed graph $(G,\sigma)$ is a 2-edge-colored graph with the colors $+$ and $-$. Then an edge-sign-preserving homomorphism of signed graphs is identical to a homomorphism of 2-edge-colored graphs. 
Thus by Theorem~\ref{thm:HomSecondDefinition}, there is a (switching) homomorphism of $(G,\sigma)$ to $(H, \pi)$ if and only if there are a switching $\sigma'$ of $\sigma$ and a 2-edge-colored graph homomorphism $(G,\sigma') \to (H, \pi)$.

\subsection{Double switching graph}

The second connection is based on a construction first presented by R.~C.~Brewster and T.~Graves in \cite{BG09}. In fact, in that paper Brewster and Graves introduced and studied (independently of other writers) the notion of homomorphisms of signed graphs as ``edge-switching [i.e., switching] homomorphisms of edge-colored graphs''. 

\newcommand\DSG{\operatorname{DSG}}
\newcommand\USG{\operatorname{USG}}

\begin{definition}
Let $(G,\sigma)$ be a signed graph. We define the \emph{double switching graph} $\DSG(G,\sigma)$ to be a signed graph with two vertex sets, $V^+$ and $V^-$, each a copy of $V(G)$, with edges $u^\alpha v^\beta$ ($\alpha, \beta = \pm$) for each edge $uv$, and signs $\bar\sigma(u^\alpha v^\beta) = \alpha\beta\sigma(uv)$. 
We normally treat $\DSG(G,\sigma)$ as a 2-edge-colored graph.
\end{definition}

Observe that $(G,\sigma)$ is an induced subgraph of $\DSG(G,\sigma)$. It is induced by $V^+$ and also by $V^-$. To switch a vertex $x$ of signed graph $(G,\sigma)$, when viewed as a subgraph induced by $V^+$, is to replace $x^+$ by $x^-$. Thus $\DSG(G,\sigma)$ can be partitioned into two copies of $(G,\sigma')$ for any switching $\sigma'$ of $\sigma$. For this reason one may refer to $\DSG(G,\sigma)$ as the \emph{double switching graph} of $(G,\sigma)$.

Brewster and Graves used this construction to connect homomorphisms of signed graphs to homomorphisms of 2-edge-colored graphs. We restate here two main connections:
 
 \begin{theorem}\label{Thm:HomSig->Hom2EdgeColored}
 	Given signed graphs $(G,\sigma)$ and $(H, \pi)$, there exists a switching homomorphism of  $(G,\sigma)$ to $(H, \pi)$ if and only if there exists a color-preserving homomorphism of the 2-edge-colored graph $(G,\sigma)$ (equivalently, of $\DSG(G,\sigma)$) to the 2-edge-colored graph $\DSG(H, \pi)$. 
 \end{theorem}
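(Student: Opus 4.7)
The plan is to leverage Theorem~\ref{thm:HomSecondDefinition}, which says that any switching homomorphism $(G,\sigma) \to (H,\pi)$ factors as a switching of $(G,\sigma)$ followed by a color-preserving (edge-sign-preserving) homomorphism. The key conceptual point is that $\DSG(H,\pi)$ is designed precisely to contain, as induced signed subgraphs, every switching of $(H,\pi)$ simultaneously: for any $\epsilon: V(H) \to \{+,-\}$, the vertex set $\{v^{\epsilon(v)} : v \in V(H)\}$ induces, via the sign rule $\bar\pi(u^\alpha v^\beta) = \alpha\beta\,\pi(uv)$, a copy of the switching of $(H,\pi)$ at $\{v : \epsilon(v) = -\}$. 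So instead of first switching the source and then mapping, we may map directly into $\DSG(H,\pi)$ and let the choice of layer at each vertex encode the switching.

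First I would dispatch the parenthetical equivalence between using $(G,\sigma)$ or $\DSG(G,\sigma)$ as the source. The implication $\DSG(G,\sigma) \to \DSG(H,\pi) \Rightarrow (G,\sigma) \to \DSG(H,\pi)$ is immediate by restricting to the $V^+$ copy of $(G,\sigma)$. For the converse, given a color-preserving $\psi: (G,\sigma) \to \DSG(H,\pi)$ with $\psi(v) = h(v)^{\epsilon(v)}$, I would extend by $v^+ \mapsto h(v)^{\epsilon(v)}$ and $v^- \mapsto h(v)^{-\epsilon(v)}$; a routine check on the four edge types $u^\alpha v^\beta$ of $\DSG(G,\sigma)$, using the sign rule on both sides, shows the extension is color-preserving.

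For the main equivalence, in the forward direction Theorem~\ref{thm:HomSecondDefinition} produces a set $X \subseteq V(G)$ and a color-preserving $\phi: (G,\sigma') \to (H,\pi)$, where $\sigma'$ is the switching of $\sigma$ at $X$. I would then define $\psi(v) = \phi(v)^{\epsilon(v)}$ with $\epsilon(v) = -$ iff $v \in X$; the sign rule gives $\bar\pi(\psi(u)\psi(v)) = \epsilon(u)\epsilon(v)\,\pi(\phi(u)\phi(v)) = \epsilon(u)\epsilon(v)\,\sigma'(uv) = \sigma(uv)$, so $\psi$ is color-preserving. Conversely, from a color-preserving $\psi(v) = h(v)^{\epsilon(v)}$, I would set $X = \{v : \epsilon(v) = -\}$, verify that the switching $\sigma'$ at $X$ satisfies $\sigma'(uv) = \pi(h(u)h(v))$, so that $h : (G,\sigma') \to (H,\pi)$ is edge-sign-preserving, and apply Theorem~\ref{thm:HomSecondDefinition} once more.

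The main obstacle is not conceptual but rather keeping the sign bookkeeping straight: every step hinges on the identity $\sigma'(uv) = \epsilon(u)\epsilon(v)\,\sigma(uv)$ for switching at $\{v : \epsilon(v) = -\}$, combined with the defining rule $\alpha\beta\,\sigma(uv)$ for edge signs in $\DSG$. Once one notices that these are the same computation viewed from different sides, both directions of both equivalences collapse to short verifications.
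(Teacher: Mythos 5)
Your argument is correct, and it is worth noting that the paper itself does not prove this theorem: it only restates it from Brewster and Graves \cite{BG09}, so your proof supplies a verification the paper omits. The route you take --- factoring a switching homomorphism through Theorem~\ref{thm:HomSecondDefinition} and observing that the two layers of $\DSG(H,\pi)$ simultaneously realize every switching of $(H,\pi)$, so that the choice of superscript $\epsilon(v)$ at each vertex encodes the switching set $X=\{v:\epsilon(v)=-\}$ --- is exactly the intended mechanism, and it is the same device the paper does spell out for the analogous extended-double-cover statement (Theorem~\ref{thm:HomByEDC(G,sigm)}). The sign bookkeeping checks out: switching at $X$ gives $\sigma'(uv)=\epsilon(u)\epsilon(v)\sigma(uv)$, the $\DSG$ rule gives $\bar\pi(y^\alpha z^\beta)=\alpha\beta\,\pi(yz)$, and composing the two makes the $\epsilon$'s cancel in both directions; likewise your extension $v^\beta\mapsto h(v)^{\beta\epsilon(v)}$ for the parenthetical equivalence sends an edge of sign $\alpha\beta\sigma(uv)$ to one of sign $\alpha\beta\epsilon(u)\epsilon(v)\pi(h(u)h(v))=\alpha\beta\sigma(uv)$, as required. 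The only point I would make explicit, since the paper insists on homomorphisms as maps of vertices \emph{and} edges (to handle parallel edges of differing signs), is that the edge component of each map should be recorded: for an edge $e$ of $G$ with endpoints $u,v$, its image is the copy of $\phi(e)$ in $\DSG(H,\pi)$ joining $\psi(u)$ to $\psi(v)$, which exists by the construction of $\DSG$; this is routine but should be said.
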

 
\begin{theorem}
	A signed graph $(G,\sigma)$ is a core if and only if $\DSG(G,\sigma)$ is a core as a 2-edge-colored graph.
\end{theorem}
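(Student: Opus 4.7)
My plan is to prove both directions by contrapositive, using Theorem~\ref{Thm:HomSig->Hom2EdgeColored} as the bridge and the standard equivalent characterization of a core as an object every endomorphism of which is an automorphism.

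For the easier direction ($\DSG(G,\sigma)$ a core $\Rightarrow$ $(G,\sigma)$ a core), I would start from a switching endomorphism $\phi$ of $(G,\sigma)$ that is not an automorphism. By Theorem~\ref{Thm:HomSig->Hom2EdgeColored}, the associated $2$-edge-colored endomorphism $\widetilde\phi$ of $\DSG(G,\sigma)$ acts on each fiber $\{y^+,y^-\}$ through the vertex map of $\phi$, so any vertex (respectively edge) of $G$ missed by $\phi$ produces two missed vertices (respectively four missed edges) of $\DSG(G,\sigma)$; hence $\widetilde\phi$ is not surjective and $\DSG(G,\sigma)$ is not a core.

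For the harder direction, I would let $\psi$ be an arbitrary $2$-edge-colored endomorphism of $\DSG(G,\sigma)$ and aim to show it is an automorphism. Restricting $\psi$ to $V^+$ and to $V^-$ and applying Theorem~\ref{Thm:HomSig->Hom2EdgeColored} in the reverse direction yields switching endomorphisms $\phi^+=(f_1^+,f_2^+)$ and $\phi^-=(f_1^-,f_2^-)$ of $(G,\sigma)$; the core hypothesis forces each to be an automorphism, so $\widetilde{\phi^+}$ is a $2$-edge-colored automorphism of $\DSG(G,\sigma)$ that agrees with $\psi$ on $V^+$. Setting $\rho:=(\widetilde{\phi^+})^{-1}\circ\psi$ one obtains a $2$-edge-colored endomorphism with $\rho|_{V^+}=\id$ and $\rho(v^-)=g(v)^{h(v)}$ for some $g\colon V(G)\to V(G)$ and $h\colon V(G)\to\{+,-\}$. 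Color-preservation of $\rho$ on the three edge-types $u^+v^-$, $u^-v^+$ and $u^-v^-$ of $\DSG(G,\sigma)$ then yields, for every edge $uv\in E(G)$, the identities $\sigma(u\,g(v))=-h(v)\sigma(uv)$, $\sigma(g(u)\,v)=-h(u)\sigma(uv)$ and $\sigma(g(u)\,g(v))=h(u)h(v)\sigma(uv)$. The last of these says that $(h,g)$ is itself a switching endomorphism of $(G,\sigma)$, hence an automorphism by the core hypothesis; in particular $g$ is a graph-automorphism of $G$. The first identity shows $N(v)\subseteq N(g(v))$, with equality following from $g$ being a graph-automorphism; so if $g(v)\neq v$ for some $v$, then $v$ and $g(v)$ are twins in $G$, and the ``twin-swap'' map sending $v\mapsto g(v)$, fixing every other vertex, equipped with the switching function $h'\equiv +$ on $V(G)\setminus\{v\}$ and $h'(v):=-h(v)$, is a non-surjective switching endomorphism of $(G,\sigma)$ — contradicting the core hypothesis. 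Hence $g=\id$; the first identity then forces $h\equiv -$ at every non-isolated vertex, so $\rho=\id$ and $\psi=\widetilde{\phi^+}$ is an automorphism.

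The main obstacle will be the twin-swap step: one has to verify that the candidate map with the prescribed switching function $h'$ really is a switching homomorphism (the sign condition on each $v$-incident edge must match the value $-h(v)$ extracted from the second identity, while on the remaining edges $h'\equiv +$ trivially preserves signs). A brief side-remark handles the degenerate case in which $G$ has two or more isolated vertices, where $g$ could permute them nontrivially; but such a signed graph is never a core, so this case does not actually arise under the hypothesis.
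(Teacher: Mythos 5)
The paper gives no proof of this theorem: it is stated as one of the two connections established by Brewster and Graves in \cite{BG09}, so there is no in-text argument to compare yours against; your write-up is therefore a self-contained verification, and it is essentially correct. The easy direction is fine, and in the harder direction the key moves all work: normalizing $\psi$ by the automorphism $\widetilde{\phi^+}$ lifted from the restriction of $\psi$ to $V^+$, reading off the three sign identities for $\rho$, recognizing $(h,g)$ as a switching endomorphism of $(G,\sigma)$ (hence an automorphism, so $g$ is a graph automorphism), and then excluding a nontrivial $g$ via the twin-swap endomorphism. Note that for the twin-swap you only need the inclusion $N(v)\subseteq N(g(v))$ coming from the first identity, not full twin-ness, so the degree-counting step can be dropped. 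Two caveats. First, your isolated-vertex remark is slightly off target: already one isolated vertex alongside any other vertex destroys the core property of $(G,\sigma)$, so that case is vacuous; but for $G=K_1$ with no edges the literal statement is false ($(K_1,\sigma)$ is a core while $\DSG(K_1,\sigma)$ consists of two isolated vertices and is not), so the theorem must be read as implicitly excluding isolated vertices. Second, since the paper allows multigraphs and loops, a fully rigorous version should track edge maps explicitly (parallel edges of equal sign already prevent either object from being a core, so nothing is lost) and should exclude loops from the twin-swap step; these are routine adjustments and do not affect the substance of your argument.
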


It is noteworthy that if $(G,\sigma)$ is in $\mathcal{G}_{ij}$ for $i, j \in \{ 0,1 \}$, then so is $\DSG(G,\sigma)$ when viewed as a signed graph.

\subsection{Extended double cover}

We now introduce a new construction which provides a different setting to capture homomorphisms of signed graphs as a special case of homomorphisms of 2-edge-colored graphs.

\begin{definition}
	Let $(G,\sigma)$ be a signed graph. We define $\EDC(G,\sigma)$ to be a signed graph on vertex set $V^+ \cup V^-$, where $V^+ := \{v^+: v \in V(G)\}$ and $V^- := \{v^-: v \in V(G)\}$.
	Vertices $x^+$ and $x^-$ are connected by a negative edge; all other edges, to be described next, are positive. 
	If vertices $u$ and $v$ are adjacent in $(G,\sigma)$ by a positive edge, then $v^+u^+$ and $v^-u^-$ are two positive edges of $\EDC(G,\sigma)$, if vertices $u$ and $v$ are adjacent in $(G,\sigma)$ by a negative edge, then $v^+u^-$ and $v^-u^+$ are two positive edges of $\EDC(G,\sigma)$.  
\end{definition}

Since $\EDC(G,\sigma)$ consists of the double covering graph of $(G,\sigma)$, as defined in \cite{Z82b}, with positive edge signs and added negative edges $x^+x^-$, we call it the \emph{extended double cover} of $(G,\sigma)$.  Let $(G,\sigma)^\circ$ denote $(G,\sigma)$ with a negative loop attached to each vertex.  There is a natural projection $p_G: \EDC(G,\sigma) \to (G,\sigma)^\circ$ induced by mapping $x^+,x^- \mapsto x$ for each $x \in V(G)$.  The negative edges map to the negative loops.  A positive edge $u^\alpha v^\beta$ maps to the edge $uv$ with sign $\alpha\beta$.  A \emph{fiber} of $p$ is any set $p_G\inv(u) = \{u^+,u^-\}$ (for a vertex) or $p_G\inv(uv) = \{u^+v^{\sigma(uv)}, u^-v^{-\sigma(uv)}\}$ (for an edge), which for a negative loop means $p_G\inv(uu) = \{u^+u^-\}$.

To be of interest for homomorphisms we first show that $\EDC(G,\sigma)$ is independent of switching.  
We define a \emph{fibered automorphism} of $\EDC(G,\sigma)$ to be an automorphism that preserves fibers; that is, it carries an element of a fiber to itself or to the other element of the same fiber.  
More generally, we define a \emph{fibered homomorphism} $\psi: \EDC(G,\sigma) \to \EDC(H,\pi)$ to be a graph homomorphism that respects fibers; that is, if $x\in V(G)$, then $\{\psi(x^+),\psi(x^-)\} = \{y^+,y^-\}$ for some $y \in V(H)$.  This definition implies that $\psi(x^+x^-)=y^+y^-$ and that $\psi$ carries an edge fiber $p_G\inv(uv)$ to $p_H(yz)$ for some edge $yz \in E(H)$.

\begin{lemma}\label{lem:fiberedmorphism}
Let $(G,\sigma)$ and $(H,\pi)$ be signed graphs.  Suppose $\psi$ is a graph homomorphism $\EDC(G,\sigma) \to \EDC(H,\pi)$.  Then $\psi$ is a 2-edge-colored homomorphism (i.e., it preserves edge signs) if and only if it is a fibered homomorphism.
\end{lemma}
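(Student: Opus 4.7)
The proof will turn on a structural observation about the EDC construction: the only negative edges of $\EDC(G,\sigma)$ are the vertex-fiber edges $x^+x^-$, and every edge lying in a non-trivial edge fiber $p_G\inv(uv)$ with $uv \in E(G)$ is positive; the same holds in $\EDC(H,\pi)$. Once this sign-dichotomy is in hand, both directions of the equivalence become short bookkeeping exercises.

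For the direction $(\Leftarrow)$, I would assume $\psi$ is fibered and check sign-preservation on each of the two kinds of edge. Given a vertex $x \in V(G)$, the vertex-fiber condition $\{\psi(x^+),\psi(x^-)\}=\{y^+,y^-\}$ immediately gives $\psi(x^+x^-) = y^+y^-$, so the negative fiber edge is sent to a negative fiber edge. For an edge $e \in p_G\inv(uv)$ with $uv \in E(G)$, the edge-fiber consequence of the fibered definition (as noted in the paper just above the lemma) places $\psi(e)$ in $p_H\inv(yz)$ for some $yz \in E(H)$, whose members are all positive. Hence every positive edge of $\EDC(G,\sigma)$ is sent to a positive edge of $\EDC(H,\pi)$, and $\psi$ preserves signs.

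For the direction $(\Rightarrow)$, I would assume $\psi$ preserves signs and derive both halves of the fibered property. Since $x^+x^-$ is negative and the only negative edges of $\EDC(H,\pi)$ are its fiber edges, one must have $\psi(x^+x^-)=y^+y^-$ for some $y$, which already yields the vertex-fiber condition $\{\psi(x^+),\psi(x^-)\}=\{y^+,y^-\}$. For the edge-fiber condition, pick a positive edge $u^\alpha v^\beta \in p_G\inv(uv)$ with $u \neq v$. Its image is positive and therefore is not a fiber edge of $\EDC(H,\pi)$, so its two endpoints must lie in distinct vertex fibers, over some $y,z$ with $y\neq z$; by the construction of $\EDC(H,\pi)$, an edge running between these fibers exists only when $yz\in E(H)$, and in that case the edge lies in $p_H\inv(yz)$. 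Thus $\psi(u^\alpha v^\beta)\in p_H\inv(yz)$, as required.

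The argument is essentially a definition chase, so I do not anticipate any genuine obstacle. The one spot that needs a moment's care is, in $(\Rightarrow)$, ruling out the possibility that a positive edge is sent to the negative fiber edge over a single vertex of $H$; the sign-preservation hypothesis dispatches this in one line, after which the vertex-fiber description of $\psi$ forces the endpoints of the image edge to sit in two different fibers corresponding to an actual edge of $H$.
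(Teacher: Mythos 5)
Your proof is correct and rests on exactly the observation the paper uses: the only negative edges of an extended double cover are the vertex-fiber edges $x^+x^-$, and one such edge sits over every vertex of both graphs. The paper's proof is a one-sentence appeal to this fact; yours is the same argument written out in full.
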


\begin{proof}
The proof follows from the fact that the only negative edges are of the form $x^+x^-$ and such an edge exists for every vertex $x$ in both graphs.
\end{proof}

\begin{proposition}\label{prop:R-Indpendent-Of-Sigma}
	Let $(G,\sigma)$ be a signed graph and let $\sigma'$ be a switching of $\sigma$. Then $\EDC(G,\sigma)$ is isomorphic to $\EDC(G,\sigma')$ as a 2-edge-colored graph.  The isomorphism is a fibered isomorphism.
\end{proposition}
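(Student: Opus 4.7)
The plan is to exhibit an explicit fibered isomorphism induced by the switching set. Recall that $\sigma'$ being a switching of $\sigma$ means there is a set $X \subseteq V(G)$ such that $\sigma'(uv) = \sigma(uv)$ when $|X \cap \{u,v\}|$ is even and $\sigma'(uv) = -\sigma(uv)$ when $|X \cap \{u,v\}|=1$. For each vertex $v$, set $\epsilon_v = -$ if $v \in X$ and $\epsilon_v = +$ otherwise, so that $\sigma'(uv) = \epsilon_u \epsilon_v \sigma(uv)$ for every edge $uv$ of $G$.

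Define $\phi: \EDC(G,\sigma) \to \EDC(G,\sigma')$ by $\phi(v^\alpha) = v^{\alpha \epsilon_v}$ for all $v \in V(G)$ and $\alpha \in \{+,-\}$. By construction $\phi$ sends the fiber $\{v^+,v^-\}$ to itself, so $\phi$ is fibered; it is also clearly a bijection on vertices (its inverse is defined by the same formula since $\epsilon_v^2 = +$).

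First I would verify preservation of the negative edges. The edge $v^+v^-$ of $\EDC(G,\sigma)$ is sent to $v^{\epsilon_v}v^{-\epsilon_v}$, which is exactly the negative edge $v^+v^-$ of $\EDC(G,\sigma')$, regardless of whether $v \in X$. Next I would check the positive edges. A positive edge of $\EDC(G,\sigma)$ over an edge $uv$ of $G$ has the form $u^\alpha v^{\alpha\sigma(uv)}$; under $\phi$ it becomes $u^{\alpha\epsilon_u}v^{\alpha\sigma(uv)\epsilon_v}$. Writing $\beta := \alpha \epsilon_u$, the target is $u^\beta v^{\beta \cdot \epsilon_u \epsilon_v \sigma(uv)} = u^\beta v^{\beta \sigma'(uv)}$, which is a positive edge of $\EDC(G,\sigma')$ over $uv$. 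Thus $\phi$ maps the positive-edge fiber of $uv$ bijectively to the positive-edge fiber of $uv$ in $\EDC(G,\sigma')$ while respecting the 2-edge-coloring.

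Hence $\phi$ is a fibered 2-edge-colored graph isomorphism, completing the proof. There is no real obstacle here; the only point requiring care is getting the signs of the positive edges to work out, which reduces to the identity $\epsilon_u \epsilon_v \sigma(uv) = \sigma'(uv)$, i.e., the very definition of switching at $X$. Lemma~\ref{lem:fiberedmorphism} may also be invoked as an alternative to direct verification of 2-edge-coloring, since $\phi$ is fibered by construction and is visibly a graph homomorphism once the positive-edge calculation above identifies the image of each edge.
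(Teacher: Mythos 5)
Your proof is correct and follows the same route as the paper: the isomorphism is exactly the map that swaps $v^+$ and $v^-$ for each switched vertex $v\in X$, which is visibly fibered. The paper states this in one line without verification; you simply spell out the sign computation confirming that positive and negative edges are preserved.
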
 

\begin{proof}
	Let $X$ be a set of vertices that are switched in order to get $\sigma'$ from $\sigma$. An isomorphism of  $\EDC(G,\sigma)$ to $\EDC(G,\sigma')$ then consists of interchanging the roles of $v^+$ and $v^-$ for every vertex $v\in X$.  This is a fibered isomorphism.
\end{proof}

\begin{theorem}\label{thm:HomByEDC(G,sigm)}
	There is a homomorphism of a signed graph $(G,\sigma)$ to a signed graph $(H, \pi)$ if and only if there is a color-preserving homomorphism of $\EDC(G,\sigma)$ to $\EDC(H,\pi)$.
\end{theorem}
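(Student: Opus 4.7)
The plan is to establish both directions by translating between signed graph homomorphisms (which Theorem~\ref{thm:HomSecondDefinition} describes as a switching followed by an edge-sign-preserving map) and fibered homomorphisms of extended double covers (the form every color-preserving homomorphism takes, by Lemma~\ref{lem:fiberedmorphism}). The core idea is that the ``twisting'' encoded by which element of a fiber in $\EDC(H,\pi)$ a vertex of $\EDC(G,\sigma)$ lands on is exactly the data of a switching of $\sigma$.

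For the easy direction, assume $(G,\sigma)\to (H,\pi)$. By Theorem~\ref{thm:HomSecondDefinition} there is a switching $\sigma'$ of $\sigma$ and an edge-sign-preserving homomorphism $\phi:(G,\sigma')\to (H,\pi)$. By Proposition~\ref{prop:R-Indpendent-Of-Sigma} it suffices to build a color-preserving homomorphism from $\EDC(G,\sigma')$ to $\EDC(H,\pi)$, so I define $\tilde\phi(x^\alpha)=\phi(x)^\alpha$ for $\alpha\in\{+,-\}$. The negative edge $x^+x^-$ maps to $\phi(x)^+\phi(x)^-$, which is negative. A positive edge $u^\alpha v^\beta$ of $\EDC(G,\sigma')$ satisfies $\alpha\beta=\sigma'(uv)$ by construction, and its image $\phi(u)^\alpha\phi(v)^\beta$ satisfies $\alpha\beta=\pi(\phi(u)\phi(v))$ because $\phi$ preserves edge signs, so it is positive. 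Hence $\tilde\phi$ preserves colors.

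For the converse, let $\psi:\EDC(G,\sigma)\to\EDC(H,\pi)$ be a color-preserving homomorphism. By Lemma~\ref{lem:fiberedmorphism}, $\psi$ is fibered, so for each $x\in V(G)$ there is a unique $\phi(x)\in V(H)$ and a sign $f(x)\in\{+,-\}$ with $\psi(x^\alpha)=\phi(x)^{\alpha f(x)}$ for both $\alpha=\pm$. For an edge $uv\in E(G)$, the two positive edges $u^\alpha v^\beta$ (those with $\alpha\beta=\sigma(uv)$) must map to positive edges of $\EDC(H,\pi)$, which forces $\alpha f(u)\cdot\beta f(v)=\pi(\phi(u)\phi(v))$, equivalently
\[
\sigma(uv)\,f(u)\,f(v)=\pi(\phi(u)\phi(v)).
\]
Let $\sigma'$ be the switching of $\sigma$ obtained by switching at every vertex $x$ with $f(x)=-$; then $\sigma'(uv)=\sigma(uv)f(u)f(v)=\pi(\phi(u)\phi(v))$. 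Hence $\phi$ (with the induced map on edges, taking an edge $uv$ to the edge $\phi(u)\phi(v)$ in $H$ that is the projection of $\psi$ applied to either lift of $uv$) is an edge-sign-preserving homomorphism $(G,\sigma')\to (H,\pi)$. By Theorem~\ref{thm:HomSecondDefinition}, this yields a homomorphism $(G,\sigma)\to (H,\pi)$.

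The only subtle step is the backward direction: one must recognize that the per-vertex choice $f(x)$ forced by the fibered structure is precisely a switching, and verify that the sign constraint coming from $\psi$ being color-preserving matches the switching formula. Once this identity $\sigma(uv)f(u)f(v)=\pi(\phi(u)\phi(v))$ is written down, everything else is bookkeeping. The edge map needs a brief remark because $G$ may have parallel edges of opposite sign, but distinct parallel edges have distinct (positive-edge) fibers in $\EDC(G,\sigma)$, so the induced edge map is well defined.
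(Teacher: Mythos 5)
Your proof is correct and follows essentially the same route as the paper: the forward direction lifts an edge-sign-preserving map via Theorem~\ref{thm:HomSecondDefinition} and Proposition~\ref{prop:R-Indpendent-Of-Sigma}, and the backward direction reads off a switching from the fibered structure guaranteed by Lemma~\ref{lem:fiberedmorphism}. Your explicit identity $\sigma(uv)f(u)f(v)=\pi(\phi(u)\phi(v))$ is just a cleaner algebraic packaging of the paper's case analysis on whether $\psi(x^+)=z^+$ or $z^-$.
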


\begin{proof}
	Suppose $(G,\sigma)$ maps to $(H, \pi)$ as a signed graph. The by Theorem~\ref{thm:HomSecondDefinition} there are a switching $\sigma'$ of $\sigma$ and a mapping $\phi$ of $(G,\sigma')$ to $(H, \pi)$ which preserves the signs as  well. By Proposition~\ref{prop:R-Indpendent-Of-Sigma} switching $\sigma$ will give an isomorphic copy of $\EDC(G,\sigma)$ (as a 2-edge-colored graph), thus, without loss of generality, $\sigma'=\sigma$.
	
	We need to present a homomorphism from $\EDC(G,\sigma)$ to  $\EDC(H,\pi)$ which preserves edge signs (i.e., colors). We show that the most natural extension of $\phi$ works. More precisely, let $\psi(x^ \epsilon)=\phi(x)^ \epsilon $ for each vertex $x$ of $G$ and for $\epsilon =+,-$ (this gives the mapping of vertices); for the edges we define $\psi(x^\epsilon y^\delta)=\phi(x)^\epsilon\phi(y)^\delta$ for every edge $x^\epsilon y^\delta$ of $\EDC(G,\sigma)$, where $\epsilon, \delta=+,-$ (this gives the mapping of edges).  This is a homomorphism because $\phi$ never maps two adjacent vertices of $G$ to a single vertex of $H$. Also, $\psi$ is clearly sign-preserving.
	
	For the other direction, suppose $\psi$ is a homomorphism from $\EDC(G,\sigma)$ to $\EDC(H, \pi)$ preserving signs. Thus each pair $x^+x^-$ must map to a pair of the form $z^+z^-$ as these are the only negative edges (i.e., $\psi$ is a fibered homomorphism). To complete the proof we should introduce a switching $\sigma'$ of $\sigma$ under which we may find a homomorphism preserving edge signs. To this end it is enough to decide for each vertex $x$ whether we switch $x$. As $x^+x^-$ is mapped to a pair $z^+z^-$, either we have $\psi(x^+)=z^+$ and $\psi(x^-)=z^-$ or we have $\psi(x^+)=z^-$ and $\psi(x^-)=z^+$.  In the former case we do not switch $x$, in the latter case we do switch $x$. Let $\sigma'$ be the signature obtained after such switching. Observe that by the definition of $\sigma'$, after composing the mapping $\psi$ with the isomorphism which changes $\EDC(G,\sigma')$ to $\EDC(G,\sigma)$ (by Proposition~\ref{prop:R-Indpendent-Of-Sigma}) we have a homomorphism $\psi'$ of $\EDC(G,\sigma')$ to $\EDC(H, \pi)$ which preserves vertex signs, i.e., $\phi'(x^\epsilon) = z^\epsilon$ for some $z \in V(H)$.
	
	We now define $\phi$ as a sign preserving homomorphism from $(G,\sigma')$ to $(H, \pi)$ as follows:
	for the vertex mapping, if $\psi(x^+x^-)=z^+z^-$ then define $\phi(x)=z$; for the edge mapping, let $p_G\psi(xy) = \{x^+y^\epsilon, x^-y^\epsilon\}$ in $\EDC(G,\sigma')$.  Thus $\sigma'(xy) = \epsilon$ and since $\psi'$ preserves vertex signs it is straightforward to show that $\pi(\phi(xy)) = \epsilon = \sigma'(xy)$, as we wished.
\end{proof}	

It worth noting the effect of the extended double cover construction on the three special classes of signed graphs. If $(G, \sigma)$ is balanced (that is to say $(G, \sigma)$ is in the class $\mathcal{G}_{01}$), then one may switch $\sigma$ to $\sigma'$ so that all edges are positive. Thus $\EDC(G,\sigma')$ consists of two vertex-disjoint copies of $G$ with a negative edge between each pair of corresponding vertices; we observe that $\EDC(G,\sigma')$ itself is a balanced signed graph and so is $\EDC(G,\sigma)$ (by Proposition \ref{prop:R-Indpendent-Of-Sigma}). If $(G,\sigma)$ is antibalanced, then $\EDC(G,\sigma)$ is a signed bipartite graph, and vice versa, if $(G,\sigma)$ is a signed bipartite graph, then $\EDC(G,\sigma)$ is antibalanced. We prove this and more in the next proposition.

\begin{theorem}\label{prop:AntiBalance<-->Bipartite}  Let $(G,\sigma)$ be a signed graph.
\begin{enumerate}[{\rm(1)}]
\item	$(G,\sigma)$ is antibalanced if and only if  $\EDC(G,\sigma)$ is a signed bipartite graph.
\item	$(G,\sigma)$ is a signed bipartite graph if and only if  $\EDC(G,\sigma)$ is antibalanced.
\end{enumerate}	
\end{theorem}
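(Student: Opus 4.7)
The plan is to work directly with cycles of $(G,\sigma)$ and their lifts through the natural projection $p_G\colon \EDC(G,\sigma)\to(G,\sigma)^\circ$. The basic lifting operation is: given a cycle $C = v_0 e_1 v_1 \cdots e_\ell v_\ell$ of $(G,\sigma)$ with $v_\ell=v_0$, start at $v_0^+$ in $\EDC(G,\sigma)$ and at each step $i$ traverse the unique edge of $\EDC(G,\sigma)$ above $e_i$ whose tail is the current vertex. A positive edge of $(G,\sigma)$ preserves the layer sign while a negative edge flips it, so after $\ell$ steps the walk lands at $v_0^{\sigma(C)}$. If $\sigma(C)=+$, this lift is already a cycle of length $\ell$ using only positive edges of $\EDC(G,\sigma)$; if $\sigma(C)=-$, appending the negative edge $v_0^-v_0^+$ produces a cycle of length $\ell+1$ using exactly one negative edge. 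Conversely, a telescoping computation along a cycle $W'$ of length $m$ in $\EDC(G,\sigma)$ that uses $k$ negative edges shows that $p_G(W')$ is a closed walk of length $m-k$ in $G$ with $\sigma(p_G(W')) = (-1)^k$, because the product of layer ratios around the closed walk must be $+$.

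For part~(1), the $(\Rightarrow)$ direction is immediate after switching $\sigma$ to an all-negative signature via Lemma~\ref{lem:Zaslavsky}: every edge of $\EDC(G,\sigma)$ then runs between $V^+$ and $V^-$, and Proposition~\ref{prop:R-Indpendent-Of-Sigma} justifies this convenient choice of signature. For $(\Leftarrow)$ I would argue by contrapositive: if $(G,\sigma)$ is not antibalanced, there is a cycle $C$ with $\sigma(C)\neq(-1)^{|C|}$, that is, a positive odd cycle or a negative even cycle, and in either case the lift is a cycle of $\EDC(G,\sigma)$ of odd length, contradicting bipartiteness. Part~(2) follows the same pattern. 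For $(\Rightarrow)$, I would use the projection formula: if $G$ is bipartite then $m-k$ is even, forcing $m\equiv k\pmod 2$, so every cycle of $\EDC(G,\sigma)$ has sign $(-1)^k=(-1)^m$, which is precisely the antibalance condition. For $(\Leftarrow)$, contrapositive again: an odd cycle $C$ of $G$ lifts to either a positive odd cycle (when $\sigma(C)=+$) or a negative even cycle (when $\sigma(C)=-$) in $\EDC(G,\sigma)$, both violating antibalance.

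The main obstacle I anticipate is carefully pinning down the projection formula $\sigma(p_G(W')) = (-1)^k$. Concretely, one must split the identity $\prod_{i=1}^m \alpha_{i-1}^{-1}\alpha_i = \alpha_0^{-1}\alpha_m = +$ into the contributions from negative edges of $\EDC(G,\sigma)$ (each giving a factor $-1$) and from positive edges (each equal to the sign of the projected edge), and verify that the projected walk is indeed the image under $p_G$ with edges of the form $x^+x^-$ collapsed. Once this telescoping identity is established, both parts reduce to elementary parity and sign arithmetic, and the switching-invariance of $\EDC(G,\sigma)$ already available from Proposition~\ref{prop:R-Indpendent-Of-Sigma} allows the ``$\Rightarrow$'' directions to assume any signature within the switching class.
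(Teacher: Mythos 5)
Your argument is correct, but for three of the four implications it takes a genuinely different route from the paper. You reason through the covering projection $p_G$: lifting a cycle $C$ of $(G,\sigma)$ to a cycle of $\EDC(G,\sigma)$ of length $|C|$ or $|C|+1$ according to $\sigma(C)$, and projecting a cycle of $\EDC(G,\sigma)$ with $k$ negative edges to a closed walk of $G$ of length $m-k$ and sign $(-1)^k$; both the lifting claim and the telescoping identity you flag are routine to verify, and the edge cases (loops, digons) cause no trouble since the negative edges of $\EDC(G,\sigma)$ form a perfect matching. The paper instead normalizes by switching in every direction: for antibalance it switches to the all-negative signature and observes that every edge of $\EDC(G,-)$ crosses between $V^+$ and $V^-$; for the converses it uses the bipartition (respectively the switching set $X$ witnessing antibalance of $\EDC(G,\sigma)$) to produce a switching of $(G,\sigma)$ that is all negative (respectively a bipartition $A=p_G(X)$ of $V(G)$). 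The paper's argument is shorter and purely structural, with no cycle bookkeeping; yours is more computational but yields as a by-product the exact correspondence between cycle types of $(G,\sigma)$ and of $\EDC(G,\sigma)$ (in effect relating the walk-girths $g_{ij}$ of the two signed graphs), which is information the switching proof does not provide. Only your $(\Rightarrow)$ direction of part~(1) coincides with the paper's.
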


\begin{proof}
(1)	If $(G,\sigma)\in \mathcal{G}_{11}$, then it can be switched to $(G, -)$ such that all edges are negative. It is then immediate that $\EDC(G, -)$ is a signed bipartite graph, and it is isomorphic to $\EDC(G,\sigma)$.

	Conversely, suppose $\EDC(G,\sigma)$ is a signed bipartite graph.  Each two vertices $x^+,x^-$ must be in opposite sets of the bipartition, so by switching to, say, $(G,\sigma')$ we can ensure that the two sets are $V^+$ and $V^-$.  Then every edge in $\EDC(G,\sigma)$ has the form $x^\epsilon y^{-\epsilon}$, so every edge of $(G,\sigma')$ is negative.  It follows that $(G,\sigma)$ is antibalanced.
	
(2)	We now consider a signed bipartite graph $(G,\sigma)$. Let $A,B$ be the partition of vertices of $G$ into two independent sets. By switching $A^+ \cup A^-$ we make all edges of $\EDC(G,\sigma)$ negative.  Thus, $\EDC(G,\sigma)$ is antibalanced.

	For the converse, suppose $\EDC(G,\sigma)$ is antibalanced.  That means we can make it all negative by switching a set $X \subseteq V^+\cup V^-$.  Since $x^+x^-$ is already negative, $x^+$ and $x^-$ are either both in $X$ or both not in $X$.  For two adjacent vertices $x,y \in V(G)$, exactly one of $x^+$ and $y^+$ is in $X$.  Let $A := p_G(X)$ and $B := V(G) \setminus A$; this gives a bipartition of $V(G)$ such that no edge of $G$ has both endpoints in $A$ or both in $B$.  Thus, $G$ is bipartite.
\end{proof}

The extended double cover can be used to give a nice inductive definition of signed projective cubes. These graphs and problems about homomorphism to them are what motivated the definition of a homomorphism of signed graphs in the first place (see \cite{NRS13}). We write more in Section~\ref{sec:Future Work} on the importance of these graphs and a connection to the Four-Color Theorem.

\begin{definition}\label{def:spc}
	A \emph{signed projective cube} of dimension 1, denoted by $\SPC(1)$, is a signed graph on two vertices connected by one negative edge and one positive edge. A signed projective cube of dimension $k$, $k\geq 2$, denoted by $\SPC(k)$, is defined to be $\EDC(\SPC(k-1))$.
\end{definition}

\begin{proposition}\label{prop:spc}
For $k\geq1$, $\SPC(k)$ is isomorphic as a $2$-edge-colored graph to the signed graph whose vertices are the elements of $\mathbb{Z}_2^{k}$ and in which two vertices $x$ and $y$ are adjacent by a positive edge if they differ in exactly one coordinate and are adjacent by a negative edge if they differ in all coordinates.
\end{proposition}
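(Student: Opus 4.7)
The plan is to proceed by induction on $k$. For the base case $k=1$, both $V(\SPC(1))$ and $\mathbb{Z}_2^1=\{0,1\}$ have two elements, and the conditions ``differ in exactly one coordinate'' and ``differ in all coordinates'' coincide for $k=1$; thus the described graph on $\mathbb{Z}_2^1$ consists of exactly one positive edge and one parallel negative edge between $0$ and $1$, which is precisely $\SPC(1)$.

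For the inductive step, suppose the claim holds for $k-1$, identifying $\SPC(k-1)$ with the signed graph $H_{k-1}$ on $\mathbb{Z}_2^{k-1}$ described in the proposition, and let $H_k$ denote the analogous signed graph on $\mathbb{Z}_2^k$. I would write down an explicit vertex bijection $\phi\colon V(\EDC(\SPC(k-1))) \to \mathbb{Z}_2^k$ defined by
$$\phi(x^+) = (x, 0), \qquad \phi(x^-) = (\bar{x}, 1),$$
where $\bar{x}$ denotes the coordinate-wise complement in $\mathbb{Z}_2^{k-1}$; equivalently, $\phi(x^-)$ is the antipode of $\phi(x^+)$ in $\mathbb{Z}_2^k$. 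The heart of the proof is to verify that $\phi$ preserves edge signs.

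By the definition of $\EDC$ the edges of $\EDC(\SPC(k-1))$ fall into three classes. A fiber edge $x^+x^-$ is negative; under $\phi$ it maps to the antipodal pair $(x,0)(\bar{x},1)$, which differs in all $k$ coordinates and is therefore negative in $H_k$. A positive edge $xy$ of $\SPC(k-1)$ (so $x,y$ differ in exactly one coordinate) lifts in $\EDC$ to two positive edges $x^+y^+$ and $x^-y^-$, whose images $(x,0)(y,0)$ and $(\bar{x},1)(\bar{y},1)$ each differ in exactly one coordinate, hence are positive in $H_k$. A negative edge $xy$ of $\SPC(k-1)$ forces $y=\bar{x}$ and lifts to positive edges $x^+y^-$ and $x^-y^+$, whose images $(x,0)(x,1)$ and $(\bar{x},1)(\bar{x},0)$ differ in just the last coordinate, again giving positive edges of $H_k$.

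To conclude, I would verify surjectivity by counting: $H_k$ has exactly $k\cdot 2^{k-1}$ positive edges and $2^{k-1}$ negative edges, matching the $2|E(\SPC(k-1))| = k\cdot 2^{k-1}$ positive edges (two lifts per original edge) and the $|V(\SPC(k-1))| = 2^{k-1}$ negative fiber edges of $\EDC(\SPC(k-1))$. Since $\phi$ is a vertex bijection that maps every edge to an edge of the correct sign and the edge multiplicities agree, it is an isomorphism of $2$-edge-colored graphs. The main subtlety is simply identifying the right $\phi$: the naive definition $\phi(x^-) = (x,1)$ would send the negative fiber edge $x^+x^-$ to a pair differing in only one coordinate, hence to a positive edge of $H_k$, so one must twist the $V^-$ copy by the complementation $x\mapsto \bar{x}$ to make the induction close.
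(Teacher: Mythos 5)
Your proof is correct and follows essentially the same route as the paper: induction on $k$, with the key step being the twisted identification of the $V^-$ copy via $x^-\mapsto(\bar{x},1)$ (the paper phrases this as relabelling each vertex $(x,1)$ as $(\bar{x},1)$ after writing out $\EDC(\SPC(k-1))$, which is exactly your map $\phi$). The sign checks on the three edge classes and the closing count are the same computation.
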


That is, $\SPC(k)$ is isomorphic as a $2$-edge-colored graph to the signed graph that consists of a $k$-dimensional cube graph $Q_k$ on vertex set $\{0,1\}^{k}$ with all edges positive and with additional negative diagonals $x\bar{x}$, where $\bar{x} := (1,1,\ldots,1)-x$.

\begin{proof}
The proposition is obviously true for $k=1$.  We assume it for $k$ and show how the construction of $\SPC(k+1)$ in Definition \ref{def:spc} produces the graph in the proposition.

By definition, the vertex set of $\EDC(\SPC(k+1))$ is $\{(x,0),(x,1): x \in \{0,1\}^k \}$ and the edges are the positive edges $(x,0)(y,0)$ and $(x,1)(y,1)$ whenever $xy \in E^+(\SPC(k))$, $(x,0)(\bar{y},1)$ and $(x,1)(\bar{y},0)$ whenever $xy \in E^-(\SPC(k))$ (which means that $y = \bar{x}$), and the negative edges $(x,0)(x,1)$ for all $x \in \{0,1\}^k$.  Now relabel every vertex $(x,1)$ by the new name $(\bar{x},1)$.  With the new labeling, the signed edge set is identical to that of the definition of $\SPC(k+1)$ in the proposition.
\end{proof}

\begin{corollary}\label{cor:spc-anti-bipart}
The signed projective cube $\SPC(k)$ is antibalanced if and only if $k$ is even and is a signed bipartite graph if and only if $k$ is odd.
\end{corollary}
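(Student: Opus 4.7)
The plan is to proceed by induction on $k$, using the recursive definition $\SPC(k+1)=\EDC(\SPC(k))$ together with Proposition~\ref{prop:AntiBalance<-->Bipartite}, which exchanges the properties ``antibalanced'' and ``signed bipartite graph'' under the operation $\EDC$.

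For the base case $k=1$, I would observe directly that $\SPC(1)$ is a digon (two vertices joined by one positive and one negative edge). Its underlying graph is bipartite, so $\SPC(1)$ is a signed bipartite graph; on the other hand, its unique cycle is the digon, which is negative but has even length, so $\SPC(1)$ is not antibalanced. This matches the statement for the odd value $k=1$.

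For the inductive step, I would assume the claim holds for $k$ and consider $\SPC(k+1)=\EDC(\SPC(k))$. By Proposition~\ref{prop:AntiBalance<-->Bipartite}(1), $\SPC(k+1)$ is a signed bipartite graph if and only if $\SPC(k)$ is antibalanced; by Proposition~\ref{prop:AntiBalance<-->Bipartite}(2), $\SPC(k+1)$ is antibalanced if and only if $\SPC(k)$ is a signed bipartite graph. If $k$ is odd then by hypothesis $\SPC(k)$ is a signed bipartite graph but not antibalanced, hence $\SPC(k+1)$ is antibalanced but not a signed bipartite graph, consistent with $k+1$ being even. If $k$ is even then $\SPC(k)$ is antibalanced but not a signed bipartite graph, so $\SPC(k+1)$ is a signed bipartite graph but not antibalanced, consistent with $k+1$ being odd.

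There is essentially no hard step here once Proposition~\ref{prop:AntiBalance<-->Bipartite} is in hand; the whole argument is an alternation driven by the two equivalences. The only point requiring care is the base case, where one must check both directions (that $\SPC(1)$ really is bipartite and really is not antibalanced) so that the induction propagates the negative as well as the positive information at each step.
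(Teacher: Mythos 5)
Your proof is correct and follows essentially the same route as the paper, which simply invokes induction on $k$ via Theorem~\ref{prop:AntiBalance<-->Bipartite} applied to the recursive definition $\SPC(k)=\EDC(\SPC(k-1))$. Your added care with the base case (verifying that $\SPC(1)$ is signed bipartite but \emph{not} antibalanced, since its digon is a negative even cycle) is exactly what is needed to make both directions of the ``if and only if'' propagate through the induction.
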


\begin{proof}
This follows by induction from Theorem \ref{prop:AntiBalance<-->Bipartite} applied to the definition of $\SPC(k)$.
\end{proof}


\section{Further Discussion and Future Work}\label{sec:Future Work}

We expect to extend many properties of homomorphisms and coloring of graphs to signed graphs. The special interest here will be a stronger connection with graph minors. However, many interesting ideas seem to arise independently. Here we mention only some of the most challenging problems of the subject. 

\subsection{Signed projective cubes and packing of negative edge sets}

The problem that motivated the introduction of homomorphisms of signed graphs (by B. Guenin) is the following conjecture in extension of the Four-Color Theorem, proposed in \cite{N07} and \cite{G05}:

\begin{conjecture}\label{conj:MapingToSPC} We propose:
\begin{enumerate}[{\rm1.}]
	\item 	If $(G,-)$ is an antibalanced signed planar graph satisfying $g_{11}(G, -)\geq 2k+1$, then $(G, -)\to \SPC(2k)$.
	\item 	If $(G, \sigma)$ is a signed planar bipartite graph satisfying $g_{10}(G, \sigma)\geq 2k$, then $(G, -)\to \SPC(2k-1)$.
\end{enumerate}
\end{conjecture}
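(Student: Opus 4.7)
The plan is to treat the two parts of the conjecture in parallel, linked by the extended double cover. By Definition~\ref{def:spc} we have $\SPC(k+1)=\EDC(\SPC(k))$, and Theorem~\ref{thm:HomByEDC(G,sigm)} then gives that $(G,-)\to\SPC(2k)$ if and only if $\EDC(G,-)\to\SPC(2k+1)$; moreover $\EDC(G,-)$ is a signed bipartite graph by Theorem~\ref{prop:AntiBalance<-->Bipartite}. So Part~1 for a planar $(G,-)$ reduces to an instance of Part~2 applied to $\EDC(G,-)$. Unfortunately $\EDC$ does not in general preserve planarity, so this is not a direct reduction between the two parts of the conjecture but a structural link motivating a parallel attack. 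The first technical step in either part is to translate a homomorphism into $\SPC(k)$ into an edge-labeling $\phi\colon E(G)\to\{e_1,\ldots,e_k,\mathbf{1}\}\subset\mathbb{Z}_2^k$ whose $\mathbb{Z}_2^k$-sum around each cycle equals $\mathbf{0}$ on positive cycles and $\mathbf{1}$ on negative cycles; on planar graphs this becomes a nowhere-zero tension statement on the dual with a prescribed value alphabet.

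The base case $k=1$ of Part~2 is immediate: any signed bipartite graph maps to $\SPC(1)$ via its own bipartition, using the positive or negative edge of $\SPC(1)$ to match the sign. The base case $k=1$ of Part~1 is the Four-Color Theorem, because $\SPC(2)$ is switching-equivalent to $(K_4,-)$ and a homomorphism from an antibalanced planar $(G,-)$ to $(K_4,-)$ is nothing but a proper $4$-coloring of $G$. For general $k$ the plan is a reducible-configurations argument by induction on $|V(G)|$: take a minimum counterexample satisfying the hypothesized girth and planarity constraints; identify a list of configurations (low-degree vertices, short chords, certain face patterns) forced to appear by Euler's formula together with the girth bound; and show each is reducible, i.e.\ any valid labeling of a smaller planar signed graph in the class can be extended through the configuration. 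This is the standard template for Gr\"otzsch-type theorems and has been made to work for small target graphs with comparable girth hypotheses, in particular for the cases already resolved by the approach of \cite{Zhu01} and \cite{N13}.

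The decisive obstacle is that the conjecture is open for every $k\geq 2$ and would, if true, be tight, since the target $\SPC(2k)$ has order $2^{2k}$ and matches the conjectured lower bound. Discharging arguments become combinatorially explosive as $k$ increases, and there is no apparent induction on $k$ carrying a homomorphism into $\SPC(2k)$ over to one into $\SPC(2k+2)$. A plausible alternative is to pass to the planar dual and seek a packing of $T$-cuts or a nowhere-zero $\mathbb{Z}_2^{2k}$-tension with values in a controlled alphabet, then apply or extend results of Seymour on $T$-joins and $k$-flows. In either route the hard kernel is the same: align the girth hypothesis (a bound of order $2k+1$ on the relevant walk-girth $g_{11}$ or $g_{10}$) with the existence of a labeling, flow, or packing strong enough to rule out the hypothetical minimum counterexample. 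For $k=1$ this alignment is encoded in the Four-Color Theorem; for $k\geq 2$ it is precisely the content of the open conjecture.
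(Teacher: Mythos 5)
The statement you were given is Conjecture~\ref{conj:MapingToSPC}; the paper contains no proof of it and does not claim one. It is presented as an open problem whose first nontrivial instance (mapping to $\SPC(2)$) is already equivalent to the Four-Color Theorem, which is verified only for $\SPC(k)$ with $k\le 7$ through the reformulation of \cite{NRS13}, and whose tightness (the bound $2^{2k}$) rests on \cite{NSS16}. You correctly recognize all of this, and your text is an honest map of the surrounding structure rather than a proof. Judged as a proof it has one decisive, irreparable gap, which you name yourself: the entire reducible-configurations/discharging argument for general $k$ is not carried out, and carrying it out is exactly the open content of the conjecture. So there is nothing to ``fix''; no proof exists to compare against.

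Two smaller points in the portion you do assert. First, Theorem~\ref{thm:HomByEDC(G,sigm)} says that $(G,-)\to\SPC(2k)$ is equivalent to a \emph{color-preserving} homomorphism $\EDC(G,-)\to\EDC(\SPC(2k))=\SPC(2k+1)$. Your formulation, that $(G,-)\to\SPC(2k)$ iff $\EDC(G,-)\to\SPC(2k+1)$ as signed graphs, only follows in one direction for free (a color-preserving homomorphism is in particular a switching homomorphism); the converse would need the additional argument that a switching homomorphism between extended double covers can be upgraded to a fibered, color-preserving one. Since you immediately discard this link because $\EDC$ does not preserve planarity, nothing downstream depends on it, but as stated it overclaims. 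Second, your base cases are correct: the bipartition argument for $\SPC(1)$ works, and $\SPC(2)$ is indeed switching-equivalent to $(K_4,-)$ (all four triangles are negative, all $4$-cycles positive, so Lemma~\ref{lem:Zaslavsky} applies), so that instance is the Four-Color Theorem, consistent with the paper's remark following Conjecture~\ref{conj:MapingToSPC-restated}.
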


Combined with a result of \cite{NSS16} and terminology introduced here the conjecture amounts to claiming that 
\begin{enumerate}[i.]
\item for $K=L=(\infty, \infty, 2k+1)$ the $(K,L)$-chromatic number of planar graphs is $2^{2k}$, 
\item for $K=L=(\infty, 2k, \infty,)$ the $(K,L)$-chromatic number of planar graphs is $2^{2k-1}$. 
\end{enumerate}

It is then natural to ask more generally for the $(K,L)$ chromatic number of planar graphs, for each value of $K$ and $L$. For $K=(3,4,3)$, and $L=(3,2,1)$ it was proposed by Máčajová, Raspaud and Škoviera \cite{MRS16} that the $(K,L)$-chromatic number of the class of planar signed graphs is 2. This conjecture has recently been disproved in \cite{KN19}.

The question captures some other well-known results or conjectures in the theory of coloring planar graphs, such as the Gr\"otzsch theorem and the Jaeger--Zhang conjecture. 

Conjecture~\ref{conj:MapingToSPC} is related to several other conjectures, a notable one being a conjecture of P. Seymour on the edge-chromatic number of planar multigraphs. The connection with homomorphisms to projective cubes and edge-partitioning problems is stated in the following theorem, we refer to \cite{NRS13} for a proof and further details on connections to Seymour's edge-coloring conjecture.

\begin{theorem} [\cite{NRS13}]\label{thm:MappingToSPC-EdgeColoring}
	A signed graph $(G,\sigma)$ maps to $\SPC(k)$ if and only if $E(G)$ can be partitioned into $k+1$ sets $E_i$ such that each $E_i$ is the set of negative edges of a signature that is switching equivalent to $\sigma$. 	
\end{theorem}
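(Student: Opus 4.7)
The plan is to exploit the concrete description of $\SPC(k)$ given by Proposition~\ref{prop:spc}: its edges partition into $k+1$ parallel classes, each a perfect matching, indexed by the nonzero vectors $e_1,\ldots,e_k \in \mathbb{Z}_2^k$ (the positive classes, where $uv$ belongs to class $e_i$ iff $u+v = e_i$) together with $J := e_1+\cdots+e_k = (1,\ldots,1)$ (the negative class of antipodal pairs $x\bar{x}$). Writing $e_0 := J$ for uniformity, a homomorphism $(G,\sigma) \to \SPC(k)$ partitions $E(G)$ into the preimages of these $k+1$ classes, and the task reduces to identifying each such preimage with the negative edge set of a signature switching-equivalent to $\sigma$.

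For the forward direction, apply Theorem~\ref{thm:HomSecondDefinition} to replace the given homomorphism by an edge-sign-preserving homomorphism $\phi:(G,\sigma') \to \SPC(k)$, where $\sigma'$ is a switching of $\sigma$. Set $E_i := \phi^{-1}(\text{class } e_i)$ for $i=0,1,\ldots,k$; these partition $E(G)$, and $E_0$ equals the negative edge set of $\sigma'$. For each $i \geq 1$, switching the vertex set $X_i := \{x \in \mathbb{Z}_2^k : x_i = 1\}$ in $\SPC(k)$ flips precisely the signs of classes $e_i$ and $J$ (those are the classes whose edges join vertices differing in the $i$-th coordinate), so the resulting signature is switching-equivalent to the original one and has class $e_i$ as its only negative class. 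Pulling this switching back to $G$ by switching $\phi^{-1}(X_i) \subseteq V(G)$ produces a signature switching-equivalent to $\sigma'$, hence to $\sigma$, whose negative edge set is exactly $E_i$; the case $i=0$ is given by $\sigma'$ itself.

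For the converse, suppose such a partition $E_0, E_1, \ldots, E_k$ exists. Label each edge $uv \in E_i$ by $\ell(uv) := e_i \in \mathbb{Z}_2^k$. Fix (in each connected component) a spanning tree $T$, choose a root $r$, set $\phi(r) := 0$, and propagate $\phi(v) := \phi(u) + \ell(uv)$ along tree edges. Well-definedness requires that, for every cycle $C$ of $G$, $\sum_{e \in C} \ell(e) = 0$ in $\mathbb{Z}_2^k$. Using $e_0 = e_1 + \cdots + e_k$, this sum rewrites as $\sum_{i=1}^k \bigl(|E_i \cap C| + |E_0 \cap C|\bigr)\, e_i$ modulo $2$, which vanishes exactly when the parities of $|E_0 \cap C|, |E_1 \cap C|, \ldots, |E_k \cap C|$ all coincide. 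That is precisely what the hypothesis supplies: by Lemma~\ref{lem:Zaslavsky}, each $|E_i \cap C|$ has the same parity as the sign of $C$ in $(G,\sigma)$, so the parities all match. Consequently $\phi$ sends each edge $uv \in E_i$ to a genuine edge of $\SPC(k)$ (positive when $i \geq 1$, negative when $i = 0$) and is edge-sign-preserving for the signature $\sigma_0$ on $G$ whose negative edges are $E_0$; since $\sigma_0$ is switching-equivalent to $\sigma$, Theorem~\ref{thm:HomSecondDefinition} upgrades $\phi$ to a switching homomorphism $(G,\sigma) \to \SPC(k)$. The main obstacle is this cycle-sum identity, which hinges on the linear relation $e_0 = e_1+\cdots+e_k$ inside $\mathbb{Z}_2^k$ combined with the simultaneous parity control from the hypothesis — the precise arithmetic feature of $\SPC(k)$ that makes the theorem go through.
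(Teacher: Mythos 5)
Your proof is correct. The paper itself contains no proof of this theorem (it explicitly defers to \cite{NRS13}), and your argument --- reading a homomorphism to $\SPC(k)$ as an edge-labelling by the $k+1$ generators $e_1,\ldots,e_k,J$ of $\mathbb{Z}_2^k$ that sums to zero on every cycle, and identifying each label class with the negative edge set of a switching of $\sigma$ via Theorem~\ref{thm:HomSecondDefinition} and Lemma~\ref{lem:Zaslavsky} --- is essentially the argument given in that reference.
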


This leads to the following packing problem:

\begin{problem}\label{prob:PackingSignatures}
Given a signed graph $(G, \sigma)$ what is the maximum number of signatures $\sigma_1, \sigma_2, \ldots, \sigma_l$ such that
\begin{itemize}
\item[i.] each $\sigma_i$ is switching equivalent to $\sigma$, and
\item[ii.] the sets $E^-(\sigma_i)$ are pairwise disjoint? 
\end{itemize}	
\end{problem}

A starting point for this question then is to ask whether, given a signed graph $(G, \sigma)$, there is a switching $(G, \sigma')$ that has no negative edge in common with $(G, \sigma)$. 
Using our results from Section~\ref{sec:CycleSpace} we may provide an easy test for this case of the question.  We consider that the empty graph, with no vertices or edges, is bipartite.

\begin{theorem}\label{thm:SeparateSignature}
Given a signed graph $(G,\sigma)$, there exists a switching $(G, \sigma')$ such that the two signed graphs have no negative edge in common if and only if the set of negative edges of $(G, \sigma)$ induces a bipartite graph.
\end{theorem}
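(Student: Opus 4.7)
The plan is to reduce the problem directly to $2$-colorability of the subgraph on the negative edges, using nothing more than the definition of switching in terms of vertex cuts.

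First I would recall the basic description of switching: every switching $\sigma'$ of $\sigma$ is determined by a set $X\subseteq V(G)$, namely $\sigma'(e)=-\sigma(e)$ if $e\in[X,V(G)\setminus X]$ and $\sigma'(e)=\sigma(e)$ otherwise. Hence
\[
E^-(\sigma')=\bigl(E^-(\sigma)\setminus[X,V(G)\setminus X]\bigr)\cup\bigl(E^+(\sigma)\cap[X,V(G)\setminus X]\bigr).
\]
The condition $E^-(\sigma)\cap E^-(\sigma')=\emptyset$ is therefore equivalent to the statement that every negative edge of $(G,\sigma)$ lies in the cut $[X,V(G)\setminus X]$, i.e.\ has exactly one endpoint in $X$.

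For the forward direction, I would observe that if such a switching exists witnessed by $X$, then $X$ meets every negative edge in exactly one endpoint, so $(V(G),E^-(\sigma))$ is a bipartite graph with sides $X\cap V^-$ and $(V(G)\setminus X)\cap V^-$ (where $V^-$ denotes the endpoints of negative edges). For the converse, if the subgraph induced by the negative edges is bipartite with sides $A$ and $B$, I would set $X:=A$ (any vertex not in $V^-$ can be placed in $X$ or not, arbitrarily). Then every negative edge of $\sigma$ crosses the cut $[X,V(G)\setminus X]$, so all of them become positive under the switching, which immediately yields $E^-(\sigma)\cap E^-(\sigma')=\emptyset$.

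There is really no main obstacle here; the one subtlety worth flagging is the conventional reading of ``the set of negative edges induces a bipartite graph'' as the subgraph $\bigl(V(G),E^-(\sigma)\bigr)$ (or equivalently, after discarding isolated vertices, the subgraph spanned by the endpoints of negative edges), so that bipartiteness is precisely proper $2$-colorability of that edge set. I would include a brief sentence noting this, and also point out that the construction actually shows a stronger statement, namely that any $2$-coloring $X$ of the negative-edge subgraph yields a valid $\sigma'$, so the number of witnessing switchings equals the number of independent sets $X$ separating the two sides of every negative edge.
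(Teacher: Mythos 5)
Your proof is correct and follows essentially the same route as the paper's main argument: both reduce the condition $E^-(\sigma)\cap E^-(\sigma')=\emptyset$ to the requirement that the switching cut $[X,V(G)\setminus X]$ contain every negative edge, which is exactly bipartiteness of the subgraph on the negative edges. Your explicit formula for $E^-(\sigma')$ and the remark on counting the witnessing switchings are harmless elaborations of the same idea.
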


\begin{proof}
Let $G^-$ be the subgraph induced by the set of negative edges of $(G,\sigma)$.  Note that $G^-$ is bipartite if and only if its edge set is a subset of a cut $[X,Y]$ with $Y:=V(G)\setminus X$.  

	For sufficiency assume that $G^-$ is bipartite.  Switching $X$, the negative edge set becomes $[X,Y] \setminus E^-$, which is disjoint from $E^-$ and induces a bipartite subgraph.  

	Conversely, if $(G,\sigma)$ and $(G, \sigma')$ are switching equivalent with no common negative edge, then the cut $[X,Y]$ which we switch to get one from the other contains all negative edges of both; thus, both sets of negative edges together induce a bipartite graph.

	(This short proof was contributed by Nicholas Lacasse.)
\end{proof}	

We note that the bipartition $\{X,Y\}$ may not be unique, so the switched signed graph may not be unique, but each such resulting signed graph has negative edge set that is disjoint from $E^-$.  Indeed, every $(G,\sigma')$ whose negative edge set is disjoint from $E^-$ is obtained in this way.  

\begin{proof}[Alternate Proofs]
Here is a different approach to necessity.  
	If $G^-$ is not bipartite, then it contains an odd cycle $C$. Since all edges of $C$ are negative and it has odd length, it is a negative cycle; thus any signature which is switching equivalent to $\sigma$ must have an odd number of edges of $C$; in particular at least one negative edge must belong to $C$, thus to $G^-$.
	
Here is an entirely different proof of sufficiency using a contraction technique that has other applications.  As in the short proof above, we assume $G^-$ is bipartite.  Thus each negative cycle, and, therefore, each negative closed walk, has at least one positive edge. 
	Let $\cW$  be the set of negative closed walks of $G$. By Theorem~\ref{thm:3-walks}, $\cW$ satisfies the exclusive 3-walk property. Let $\bar{G}$ be the minor of $G$ obtained by contracting all negative edges of $(G, \sigma)$ (without deletion of parallel edges or loops), and let $\bar{\cW}$ be the set of closed walks in $\bar{G}$ obtained from walks in $\cW$. As each walk in $\cW$ has at least one positive edge, none of the closed walks in $\bar{\cW}$ is a trivial walk. It is then straightforward to check that $\bar{\cW}$ has inherited the exclusive 3-walk property, and therefore, by Theorem~\ref{thm:3-walks},  $\bar{\cW}$ is the set of negative closed walks $(\bar{G}, \eta)$ for some signature $\eta$ of $\bar{G}$. As each edge of $\bar{G}$ corresponds to an edge of $G$, we may lift $\eta$ to a signature $\sigma'$ of $G$ which assigns negative sign to exactly those edges to which $\eta$ has assigned a negative sign.
	As such, the set of negative closed walks of $(G,\sigma')$ is the same as that of $(G, \sigma)$. But since all negative edges of $\sigma$ were contracted to get $\bar{G}$, the two signed graphs on $G$ have no negative edge in common.
\end{proof}

We would like to point out the importance of this theorem in approaching Conjecture~\ref{conj:MapingToSPC}. To this end we first restate the conjecture in the following uniform form:

\begin{conjecture}\label{conj:MapingToSPC-restated}
	A planar signed graph $(G,\sigma)$ maps to $\SPC(k)$ if and only if the four girth conditions of Lemma~\ref{lem:NoHomGirth}, the no-homomorphism lemma, are satisfied.   
\end{conjecture}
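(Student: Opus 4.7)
The forward direction is immediate from Lemma~\ref{lem:NoHomGirth}. All the content lies in sufficiency, which is a deep open problem. Already the case $k=2$ in the antibalanced branch is equivalent to the Four-Color Theorem, since by Theorem~\ref{thm:MappingToSPC-EdgeColoring} it asks that $E(G)$ split into three negative edge sets of switchings of $(G,-)$, and an elementary check identifies such a partition with a proper $4$-coloring of $G$; other small cases subsume Grötzsch's theorem and the Jaeger--Zhang conjecture. My plan is therefore to isolate the combinatorial reductions and leave the planarity-specific work as the irreducible kernel.

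The first step is to reformulate sufficiency via Theorem~\ref{thm:MappingToSPC-EdgeColoring}: under the girth hypothesis, one must partition $E(G)$ into $k+1$ subsets each of which is the negative edge set of some switching of $\sigma$. Theorem~\ref{thm:SeparateSignature} is the natural peeling engine: whenever the current negative edge set $E^-$ can be forced, by a preliminary switching, to induce a bipartite subgraph of $G$, one extracts a switching whose negative set is disjoint from $E^-$, deletes it, and recurses. The content here is to show that the four girth bounds on $(G,\sigma)$ both force the initial bipartiteness after suitable switching and persist, in an appropriately weakened form, on the residual signed graph after each peel, so that the packing continues to completion. Theorem~\ref{thm:oddedges} serves as the bookkeeping device: negative closed walks are tracked through a fundamental system of cycles, and each peel removes one generator from that system.

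The second step is induction on $k$ via the recursion $\SPC(k)=\EDC(\SPC(k-1))$, using Theorem~\ref{thm:HomByEDC(G,sigm)} together with Theorem~\ref{prop:AntiBalance<-->Bipartite} and Corollary~\ref{cor:spc-anti-bipart}, which interchange the antibalanced and signed bipartite classes under $\EDC$ while raising $k$ by one; this should let one shuttle between the two branches of Conjecture~\ref{conj:MapingToSPC}. The unavoidable obstacle, and the reason the statement is conjectural, is the base of the induction: to produce an actual edge partition on a planar input one eventually needs a discharging argument tuned to signed graphs, with forbidden configurations expressed simultaneously in terms of all four girths $g_{ij}$ rather than the classical odd-girth alone, and with reduction rules respecting both contraction of positive edges and switching. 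Designing such a discharging scheme---one that simultaneously handles the balance, antibalance, and bipartite constraints encoded by the $g_{ij}$---is what I expect to be the principal challenge, as it must at minimum reproduce the Four-Color Theorem in its natural signed-graph setting.
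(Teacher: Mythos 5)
The statement you were asked to prove is a \emph{conjecture}: the paper offers no proof of the sufficiency direction, only the observation that necessity follows from Lemma~\ref{lem:NoHomGirth} and that the conditions force $(G,\sigma)$ into $\mathcal{G}_{11}$ or $\mathcal{G}_{10}$ (since $g_{01}(\SPC(2k))=g_{10}(\SPC(2k))=\infty$, and similarly for odd dimension), which is exactly why it is equivalent to Conjecture~\ref{conj:MapingToSPC}. You correctly recognize this, and your outline tracks the paper's own discussion closely: the reformulation as a signature-packing problem via Theorem~\ref{thm:MappingToSPC-EdgeColoring}, the identification of the $k=2$ case with the Four-Color Theorem, and the use of Theorem~\ref{thm:SeparateSignature} as the first step of an inductive peeling. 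So there is nothing to ``fix'': no complete argument exists, and none should be claimed.

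Two points where your plan drifts from what the paper actually indicates. First, Theorem~\ref{thm:MappingToSPC-EdgeColoring} requires the $k+1$ negative edge sets to \emph{partition} $E(G)$, not merely to be pairwise disjoint; a peeling that extracts a disjoint switching and ``deletes'' it does not by itself guarantee coverage, and in the paper's own sketch of an inductive step the extracted negative edges are \emph{contracted} (as in the alternate proof of Theorem~\ref{thm:SeparateSignature}), with the residual signed graph $(\tilde G,\eta)$ required to satisfy the girth conditions for $\SPC(k-1)$ --- that requirement, not a discharging scheme, is flagged as the missing ingredient. Second, the cases known to hold ($k\le 7$, per \cite{NRS13}) are obtained through an equivalent reformulation related to Seymour's edge-colouring conjecture for planar multigraphs, not through a signed-graph discharging argument; any proposed discharging approach would, as you note, have to reprove the Four-Color Theorem at $k=2$, which is a strong indication that this route is not where the existing partial results come from.
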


The two conjectures are equivalent because if for $(G,\sigma)$ we have $g_{ij}(G, \sigma) \geq g_{ij}(\SPC(2k))$, then since $g_{01}(\SPC(2k))=g_{10}(\SPC(2k))=\infty$, $(G,\sigma)$ must be an antibalanced graph; and similarly $(G, \sigma)$ must be a signed bipartite graph in order to satisfy $g_{ij}(G, \sigma) \geq g_{ij}(\SPC(2k-1))$.

While the case $k=1$ of the conjecture is rather trivial, the case $k=2$ of the conjecture is equivalent to the four-color theorem. It is verified for $k\leq 7$ through an equivalent reformulation and by means of induction on $k$ (we refer to \cite{NRS13} for details). In attempt to apply induction (based on $k$) directly on this statement, 
what one would need is to find a signature $\sigma'$ equivalent to $\sigma$, such that the signed graph $(\tilde{G}, \eta)$ obtained by switching as in the alternate proof of Theorem~\ref{thm:SeparateSignature} satisfies the four conditions of the no-homomorphism lemma for mapping to $\SPC(k-1)$. 

One may also consider a more general form of the question.

\begin{problem}\label{prob:disjointnegsets}
	Given signed graphs $(G,\sigma)$ and $(G, \eta)$, are there switchings $(G, \sigma')$ and $(G, \eta')$ such that the two  signatures have no negative edge in common?
\end{problem}

Problem \ref{prob:PackingSignatures} is the planar dual of the packing problem for $T$-joins.  Consider a signed plane graph $(G,\sigma)$ and the consequent signs of dual vertices (i.e., faces of $G$).  The dual edges have the same signs as in the original signed graph.  Let $T$ be the set of negative dual vertices.  The negative edges of the dual graph $G^*$ have the property that they have even degree at every positive vertex of $G^*$ and odd degree at every negative vertex.  This is the definition of a $T$-join.  Switching $(G,\sigma)$ to $(G,\sigma')$ does not change $T$, but it does change the negative edge set.  Thus, $E^-(G,\sigma)$ and $E^-(G,\sigma')$ are disjoint if and only if the corresponding $T$-joins are disjoint.  Thus, packing the most negative edge sets as in Problem \ref{prob:PackingSignatures} is the planar dual of packing the most $T$-joins in a graph with a fixed set $T$ of evenly many vertices.

Finally, we would like to mention a possible extension of Conjecture~\ref{conj:MapingToSPC-restated} in relation to minors theory.

\begin{conjecture}\label{conj:MapingToSPC-MinorVersion}
	A signed graph $(G,\sigma)$ without a $(K_5, -)$-minor maps to $\SPC(k)$ if and only if the four girth conditions of Lemma~\ref{lem:NoHomGirth}, the no-homomorphism lemma, are satisfied.   
\end{conjecture}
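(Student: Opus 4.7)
The ``only if'' direction is immediate from Lemma~\ref{lem:NoHomGirth}, so I focus on the ``if'' direction. My plan is to induct on $k$, using Conjecture~\ref{conj:MapingToSPC-restated} as a black box for handling planar pieces and a Wagner-type structural decomposition of $(K_5,-)$-minor-free signed graphs to reduce the general case to planar sub-pieces that can be recombined across small separators.

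For the base, $k=1$ is routine since $\SPC(1)$ is universal for bipartite signed graphs. The case $k=2$ is interesting: the hypothesis forces $(G,\sigma)$ to be antibalanced with $g_{11}(G,\sigma) \geq 3$, so after switching we may assume $\sigma\equiv -$ and the underlying $G$ is triangle-free. In the all-negative regime the $(K_5,-)$-minor-freeness of $(G,-)$ coincides with $K_5$-minor-freeness of $G$, and since $\SPC(2)$ is switching-isomorphic to $(K_4,-)$, mapping $(G,-)\to\SPC(2)$ reduces to finding a proper $4$-coloring of $G$. This is a known consequence of Wagner's structure theorem for $K_5$-minor-free graphs together with the Four-Color Theorem.

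For the inductive step ($k\geq 3$), my plan is to combine Theorem~\ref{thm:MappingToSPC-EdgeColoring} with Theorem~\ref{thm:SeparateSignature}. By Theorem~\ref{thm:MappingToSPC-EdgeColoring} a homomorphism to $\SPC(k)$ is equivalent to partitioning $E(G)$ into $k+1$ classes, each the negative-edge set of a signature switching-equivalent to $\sigma$, and Theorem~\ref{thm:SeparateSignature} characterizes when one such peeling step is possible by bipartiteness of the subgraph induced by $E^-(\sigma)$. The task is therefore to choose, at each peeling step, a signature $\sigma'$ whose negative-edge set is disjoint from $E^-(\sigma)$ and such that the residual signed graph, after removing that class, still satisfies the girth conditions appropriate for $\SPC(k-1)$ and remains $(K_5,-)$-minor-free. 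Because the parity of $k$ alternates $\SPC(k)$ between antibalanced and signed bipartite by Corollary~\ref{cor:spc-anti-bipart}, it may be cleaner to advance the induction in steps of two via the identity $\SPC(k)=\EDC(\SPC(k-1))$ together with Theorem~\ref{thm:HomByEDC(G,sigm)}, which recasts mapping to $\SPC(k)$ in terms of a homomorphism from $\EDC(G,\sigma)$ and lets one transport structural hypotheses along the $\EDC$ construction using Theorem~\ref{prop:AntiBalance<-->Bipartite}.

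The main obstacle is that every ingredient above is currently open or delicate. Conjecture~\ref{conj:MapingToSPC-restated} itself is unresolved for $k\geq 8$, so a complete proof of the minor-closed version must in particular subsume the planar case. The decisive missing piece is a Wagner-type structural theorem for $(K_5,-)$-minor-free signed graphs that is sensitive to signs of cycles, together with a gluing lemma showing that partial homomorphisms to $\SPC(k)$ across small clique-sum separators can be combined compatibly with the closed-walk signs. Without such a structural framework, even organizing the induction so that the fine walk-girth parameters $g_{ij}$ are preserved after one peeling step seems out of reach, and this is where I expect the real difficulty of the conjecture to reside.
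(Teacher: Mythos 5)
The statement you are addressing is labelled as a \emph{conjecture} in the paper and no proof of it is given there; the paper even records that the planar special case (Conjecture~\ref{conj:MapingToSPC-restated}) is only verified for $k\le 7$ and that its case $k=2$ already encodes the Four-Color Theorem. Your write-up is candid about this: what you offer is a programme, not a proof, and your inductive step explicitly depends on ingredients that do not exist yet (a sign-sensitive Wagner-type decomposition of $(K_5,-)$-minor-free signed graphs, a gluing lemma across small separators compatible with closed-walk signs, and control of the walk-girths $g_{ij}$ after one peeling step via Theorems~\ref{thm:MappingToSPC-EdgeColoring} and~\ref{thm:SeparateSignature}). So there is nothing to compare the proposal against: neither you nor the paper proves the statement, and your outline should not be presented as more than a plausible strategy.

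One concrete error is worth flagging in the part you do claim as settled. For $k=2$ you assert that in the all-negative regime ``$(K_5,-)$-minor-freeness of $(G,-)$ coincides with $K_5$-minor-freeness of $G$,'' and hence that this case follows from Wagner's structure theorem plus the Four-Color Theorem. This is false. A $(K_5,-)$-minor is an \emph{odd} $K_5$-minor (the minor operations allow switching and then contracting only positive edges), and the class of graphs with no odd $K_5$-minor strictly contains the class with no $K_5$-minor; this strict inclusion is precisely what Section~\ref{sec:special} emphasizes and is the reason the conjecture is interesting at all. The case $k=2$ of the conjecture is the $n=5$ instance of the Odd Hadwiger conjecture stated at the end of the paper, which is open (only an unpublished claim of Guenin exists). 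A smaller slip: the girth hypothesis does not force $G$ to be triangle-free, since $g_{11}(\SPC(2))=3$ only excludes negative loops, not triangles (this does not affect your argument, but it signals the same confusion between ordinary and signed girth/minor conditions). In short, even the base case of your induction is an open problem, so the proposal cannot be accepted as a proof.
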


\subsection{Future work}

One may observe that the four no-homomorphism conditions of Lemma~\ref{lem:NoHomGirth} are on the four possible types of cycle based on the parity of the length and the sign. Parity itself is the sign of the cycle when all edges are negative. This leads to the idea of assigning an array of signatures to a given graph. A homomorphism among such structures then is a mapping of basic elements that preserves the signs of closed walks in all signatures. We hope to address this generalization and its properties in future work.

The algorithm of Proposition~\ref{prop:Computing g_ij} is designed only to show that $g_{ij}(G, \sigma)$ can be computed in polynomial time. It would of interest to design more efficient algorithms.   We believe the computation should be possible in time $O(n^3)$.

As an extension of the four-color problem, Hadwiger proposed that every simple graph with no $K_n$-minor admits a proper $(n-1)$-coloring. That is, any balanced signed graph without a loop and with no $(K_n, +)$-minor admits a homomorphism to the balanced complete graph on $n-1$ vertices. This is widely believed to be one of the most challenging problems in graph theory. B. Gerards and P. Seymour independently then introduced a strengthening of the conjecture which can be well stated using the terminology of signed graphs:

\begin{conjecture} [Odd-Hadwiger]
	If $(G,-)$ is an antibalanced signed graph with no loop that has no $(K_n,-)$-minor, then $(G, -)\to (K_{n-1},-)$.  
\end{conjecture}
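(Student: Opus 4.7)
The plan begins with translating the statement into classical language. Since $(G,-)$ and $(K_{n-1},-)$ both lie in $\mathcal{G}_{11}$, the discussion preceding Section~\ref{sec:special} shows that $(G,-)\to(K_{n-1},-)$ as signed graphs is equivalent to $G\to K_{n-1}$ as simple graphs, i.e., to $\chi(G)\le n-1$. The hypothesis, unpacked through the link-minor definition (contract positive edges, switch, delete), says that $G$ admits no \emph{odd} $K_n$-minor in the sense of Gerards and Seymour: no family of disjoint connected subgraphs, each bipartitioned with all internal edges crossing its bipartition, contracting to a $K_n$. Thus the task is exactly the classical Odd Hadwiger Conjecture.

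I would attempt induction on $n$. The base cases $n=1,2$ are vacuous and $n=3$ is immediate (no odd $K_3$-minor means $G$ is bipartite, so $\chi(G)\le 2$). The case $n=4$ is the Catlin-type theorem already invoked in the paragraph on $\mathcal{G}_{11}$ above. For the inductive step, let $(G,-)$ be a minimum counterexample, with minimum possible $|V(G)|+|E(G)|$. The standard degeneracy reduction comes first: if some vertex $v$ has $\deg(v)\le n-2$, then $G-v$ has no $(K_n,-)$-minor either, induction supplies a proper $(n-1)$-coloring of $G-v$, and a free color remains for $v$. So we may assume $\delta(G)\ge n-1$. The heart of the argument would be to locate a connected bipartitioned subgraph $H\subseteq G$, say with bipartition $(A,B)$ and all edges crossing it, whose contraction preserves absence of an odd $K_n$-minor; contracting $H$, applying induction to $G/H$, and then lifting the $(n-1)$-coloring of $G/H$ back to $G$ by propagating the color of the contracted vertex consistently across $(A,B)$ would close the induction.

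The main obstacle is that this is a longstanding open problem, no easier than the ordinary Hadwiger Conjecture, which itself remains open for $n\ge 7$; the odd-minor variant is strictly stronger because every odd $K_n$-minor yields a $K_n$-minor. The essential difficulty of the contraction step above is that there is no reason an arbitrary bipartitioned subgraph $H$ will exist whose contraction preserves the hypothesis, and even finding a non-trivial bipartitioned contraction at all requires structural input. A realistic attack would need either (i) a structure theorem for graphs with no odd $K_n$-minor, analogous to Robertson--Seymour's theorem for ordinary minors but sensitive to bipartitions across all tangles and vortices, or (ii) a genuinely new extremal identity linking odd clique minors to chromatic number. The best current bounds, by Geelen--Gerards--Reed--Seymour--Vetta and by Kawarabayashi and collaborators, give only $\chi(G) = O(n\sqrt{\log n})$ under the odd-minor-free hypothesis, still a $\sqrt{\log n}$ factor away from the conjectured sharp $n-1$; so even a polynomial improvement of that estimate, or a verification of a new base case such as $n=5$ without routing through the Four-Color Theorem, would represent substantive progress toward the full conjecture.
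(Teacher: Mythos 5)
This statement is the Gerards--Seymour Odd Hadwiger Conjecture; the paper presents it explicitly as an open problem and offers no proof, so there is no proof of record to compare yours against. You have handled this correctly: you did not claim a proof, and your reduction to the classical formulation is accurate. The homomorphism $(G,-)\to(K_{n-1},-)$ between antibalanced signed graphs is indeed equivalent to $G\to K_{n-1}$, i.e.\ $\chi(G)\le n-1$, and a $(K_n,-)$-minor of $(G,-)$ under the paper's link-minor operations (switch, then contract positive edges) is exactly an odd $K_n$-minor. One small imprecision in your unpacking: the odd-minor condition requires not only that each branch set carry a bipartition crossed by all of its internal edges, but also that the edges realizing the $K_n$ between branch sets be \emph{monochromatic} with respect to the global two-colouring (these are the edges that remain negative after the switching). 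Your base cases $n\le 3$, the identification of $n=4$ with Catlin's theorem, and the degeneracy reduction to $\delta(G)\ge n-1$ are all sound and consistent with what the paper records about the classes $\mathcal{G}_{11}$.

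The remainder of your proposal is, appropriately, a status report rather than an argument: the contraction step you isolate is precisely where every known attack stalls, and the cited bound $\chi(G)=O\bigl(n\sqrt{\log n}\bigr)$ for graphs with no odd $K_n$-minor is the current state of the art. So there is no gap to flag in the usual sense --- only the honest observation, which you make yourself, that the inductive step has no known implementation. Had you claimed to close that step, it would have been wrong; as written, your text is a correct translation of the conjecture into graph-colouring language together with an accurate account of why it remains open.
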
  

This important conjecture remains wide open. B. Guenin in 2005 claimed a proof for $n=4$ (assuming the four-color theorem), but to now there has been no publication.


\section{Acknowledgement}
\bigskip

\noindent
{\bf Acknowledgement.}
This work is supported by the French ANR project HOSIGRA (ANR-17-CE40-0022). The work is descendant of an earlier work with Cl\'ement Charpentier and has benefited from discussions with him.

\bigskip

\bibliographystyle{plain}

\begin{thebibliography}{99}


\bibitem{BM11}
J.A. Bondy and U.S.R. Murty.
{\it Graph Theory}, 3rd ed.
Graduate Texts in Mathematics, Vol. 244, Springer, London, 2011.

\bibitem{B93}
R.C. Brewster.
Vertex colourings of edge-coloured graphs. 
Ph.D. thesis, Simon Fraser University, Canada, 1993.

\bibitem{BFHN17}
R.C. Brewster, F. Foucaud, P. Hell and R. Naserasr.
The complexity of signed graph and edge-coloured graph homomorphisms. 
{\it Discrete Math.} 340(2) (2017), 223--235.


\bibitem{BG09} R.C. Brewster and T. Graves. Edge-switching homomorphisms of edge-coloured graphs. 
\emph{Discrete Math.} 309(18) (2009), 5540--5546.

\bibitem{C79}
P. Catlin. Ha\'jos graph-coloring conjecture: Variations and counterexamples. {\em J. Combin. Theory Ser. B} 26 (1979), 268--274.

\bibitem{FV98}
T. Feder and M.Y. Vardi.
The computational structure of monotone monadic SNP and constraint satisfaction: a study through datalog and group theory.
{\it SIAM J. Comput.} 28(1) (1998), 57--104.

\bibitem{FW16}  T.\ Fleiner and G.\ Wiener.
Coloring signed graphs using DFS.  
\emph{Optim. Lett.} 10(4) (2016), 865--869.

\bibitem{FN14}
F. Foucaud and R. Naserasr.
The complexity of homomorphisms of signed graphs and signed constraint satisfaction.
In: {\it Proc. 11th Latin American Symposium on Theoretical Informatics 2014} (LATIN 2014), 
Lecture Notes Comput. Sci., Vol. 8392, pp. 526--537,  Springer, 2014.

\bibitem{G05} B. Guenin.
Packing odd circuit covers: A conjecture.
Manuscript, 2005.

\bibitem{H78} Pierre Hansen.  Labelling algorithms for balance in signed graphs.  
In: \emph{Probl\`emes combinatoires et th\'eorie des graphes} (Colloq. Int., Orsay, 1976), pp.\ 215--217.  
\'Editions du C.N.R.S., Paris, 1978.

\bibitem{H53} F. Harary. On the notion of balance of a signed graph. \emph{Michigan Math. J.} 2(2) (1953--1954), 143--146.

\bibitem{HK80}  Frank Harary and Jerald A.\ Kabell.  A simple algorithm to detect balance in signed graphs.  \emph{Math. Social Sci.} 1 (1980/81), 131--136.

\bibitem{HNbook} P. Hell and J. Ne\v{s}et\v{r}il. \emph{Graphs and Homomorphisms}. Oxford Lecture Ser. Math. Appl., Oxford University Press, 2004.

\bibitem{J84}
F. Jaeger. 
On circular flows in graphs. 
In: {\it Finite and Infinite Sets} (Eger, 1981), Colloq.
Math. Soc. Janos Bolyai, Vol. 37, pp. 391--402, North-Holland, Amsterdam, 1984.

\bibitem{KS18} Y. Kang, E. Steffen. 
Circular coloring of signed graphs. {\it J. Graph Theory} 87(2) (2018), 135--148.


\bibitem{KN19} F. Kard\v os, J. Narboni. On the Four-Color Theorem for signed graphs.  Submitted.
\url{https://arXiv.org/abs/1906.05638}

\bibitem{KZ00}
W. Klostermeyer and C.Q. Zhang. 
$(2 + \varepsilon)$-coloring of planar graphs with large odd girth.
{\it J. Graph Theory} 33(2) (2000), 109--119.

\bibitem{K36} D. K\"onig. \emph{Theorie der endlichen und unendlichen Graphen.} Akademische Verlagsgesellschaft, Leipzig, 1936.

English trans.\ by R.C.\ Read, {\it Theory of Finite and Infinite Graphs}.  Birkhauser, Boston, 1990.

\bibitem{MRS16}
Edita Máčajová, André Raspaud and Martin Škoviera,
The chromatic number of a signed graph. {\it Electron. J. Combin.} 23(1) (2016), Paper 1.14, 10 pp. 

\bibitem{N07}
R. Naserasr,
Homomorphisms and edge-colorings of planar graphs.
{\em J. Combin. Theory Ser. B}  {97(3)}  (2007), 394--400.

\bibitem{N13} R. Naserasr, Mapping planar graphs into projective cubes.  
{\em J. Graph Theory} {74}(3) (2013), 249--259.

\bibitem{NRS13}
R. Naserasr, E. Rollov\'a and \'E. Sopena. 
Homomorphisms of planar signed graphs to signed projective cubes.
{\it Discrete Math. Theoret. Comput. Sci.} 15(3) (2013), 1--12.

\bibitem{NRS15}
R. Naserasr, E. Rollov\'a and \'E. Sopena. 
Homomorphisms of signed graphs.
{\it J. Graph Theory} 79(3) (2015), 178--212.

\bibitem{NSS16} R. Naserasr, S. Sen and Q. Sun. 
Walk-powers and homomorphism bounds of planar signed graphs. 
\emph{Graphs Combin.} 32(4) (2016), 1505--1519.

\bibitem{OPS17}
P. Ochem, A. Pinlou and S. Sen.
Homomorphisms of 2-edge-colored triangle-free planar graphs.
{\it J. Graph Theory} 85(1) (2017), 258--277.

\bibitem{Sz}
Andrzej Szepietowski.
Negative closed walks in signed graphs: A note.  Submitted.  
\url{https://arXiv.org/abs/1910.06032}

\bibitem{T90}
C. Thomassen.
Embeddings of graphs with no short noncontractible cycles. 
{\it J. Combin. Theory Ser. B} 48(2) (1990), 155--177.


\bibitem{Z81} T. Zaslavsky. Characterizations of signed graphs. \emph{J. Graph Theory} 5(4) (1981), 401--406.

\bibitem{Z82b} T. Zaslavsky. Signed graphs. \emph{Discrete Appl. Math.} 4(1) (1982), 47--74.

\bibitem{Z82a} T. Zaslavsky. Signed graph coloring. \emph{Discrete Math.} 39(2) (1982), 215--228.

\bibitem{Z82c} T. Zaslavsky. Chromatic invariants of signed graphs. \emph{Discrete Math.} 42(2--3) (1982), 287--312.

\bibitem{Z84} T. Zaslavsky. How colorful the signed graph? \emph{Discrete Math.} 52(2--3) (1984), 279--284.

\bibitem{Z97} T. Zaslavsky. Is there a matroid theory of signed graph embedding?  
{\it Ars Combinatoria}, 45 (1997), 129--141.

\bibitem{Z98} T. Zaslavsky, A mathematical bibliography of signed and gain graphs and allied areas.  
\emph{Electronic J. Combin.}, Dynamic Surveys in Combinatorics, No.\ DS8, 1998. Ninth ed., 2018.


\bibitem{Zhu01}X. Zhu, 
Circular chromatic number of planar graphs of large odd girth.
{\it Electron. J. Combin.} 8(1) (2001), Research Paper 25, 11 pp.

\end{thebibliography}

\end{document}